\newcommand{\R}{{\mathbb R}}
\newcommand{\Z}{{\mathbb Z}}
\newcommand{\no}{\nonumber}
\newcommand{\be}{\begin{eqnarray}}
\newcommand{\ben}{\begin{eqnarray*}}
\newcommand{\en}{\end{eqnarray}}
\newcommand{\enn}{\end{eqnarray*}}
\newcommand{\ba}{\backslash}
\newcommand{\pa}{\partial}
\newcommand{\ov}{\overline}
\newcommand{\ga}{\gamma}
\newcommand{\G}{\Gamma}
\newcommand{\Om}{\Omega}
\newcommand{\ra}{\rightarrow}
\newtheorem{theorem}{Theorem}[section]
\newtheorem{lemma}[theorem]{Lemma}
\newtheorem{remark}[theorem]{Remark}
\newtheorem{example}{Example}[section]
\definecolor{hgh}{rgb}{0,0,0}
\definecolor{xxx}{rgb}{0,0,0}
\begin{document}
\renewcommand{\theequation}{\arabic{section}.\arabic{equation}}
\begin{titlepage}

\title{An inverse obstacle problem with a single pair of Cauchy data: Laplace's equation case}

\author{Xiaoxu Xu\thanks{{\color{xxx}School of Mathematics and Statistics,} Xi'an Jiaotong University, Xi'an, Shaanxi, 710049, China ({\sf xuxiaoxu@xjtu.edu.cn})}
\and Guanghui Hu\thanks{{\color{hgh}Corresponding author: School of Mathematical Sciences and LPMC}, Nankai University, Tianjin, 300071, China ({\sf ghhu@nankai.edu.cn}).}}

\date{}
\end{titlepage}
\maketitle
\vspace{.2in}
\begin{abstract}
This paper is concerned with an inverse obstacle problem for the Laplace's equation.
The aim is to recover the constant conductivity coefficient in the equation and the boundary of a Dirichlet polygonal obstacle from a single pair of Cauchy data.
Uniqueness results are established under some a priori assumptions on the input boundary value data.
A domain-defined sampling method, based on the factorization method originating from inverse acoustic scattering, has been proposed to recover both the constant conductivity coefficient and the polygonal obstacle.
A hybrid strategy, which combines the sampling method and iterative scheme, is employed {\color{hgh}to reconstruct} the location and shape of the obstacle.
Numerical examples indicate that our method is efficient.

\vspace{.2in} {\bf Keywords}: inverse problem, uniqueness, single pair of Cauchy data, coefficient recovery, polygonal obstacle, factorization method.
\end{abstract}

\section{Introduction}

Suppose that $D$ is a convex polygon contained in the interior of a disk $B$.
We consider the following {\color{hgh} elliptic} boundary value problem:
\be\label{1}
{\rm div}(\gamma\nabla u)=0 & \text{in}\;B\ba\ov{D},\\ \label{2}
u=f & \text{on}\;\pa B,\\ \label{3}
u=0 & \text{on}\;\pa D.
\en
Here, $f$ is called the Dirichlet boundary value of $u$ on $\pa B$ and $D$ is assumed to be an obstacle with the homogeneous Dirichlet boundary condition $u=0$ on $\pa D$. {\color{hgh}In this paper}
the conductivity coefficient $\gamma>0$ in (\ref{1}) is assumed to be a constant in $B\ba\ov{D}$, which implies that $u$ is harmonic and analytic in $B\ba\ov{D}$ (see \cite[Definition 6.1 and Theorem 6.6]{Kress14}).
The existence of a unique solution to (\ref{1})--(\ref{3}) (including the case when (\ref{3}) is replaced by $u\!=\!g$ for some Dirichlet boundary value $g$ on $\pa D$) has been established by {\color{xxx}the} variational method (see \cite[Example 3.14]{Monk}).
Moreover, the {\color{xxx}well-posedness} of (\ref{1})--(\ref{3}) can also be established by the integral equation method {\color{xxx}(see Section \ref{s3.5} below,} see also \cite[Section 6.5]{Kress14} for boundary value problems in domains with corners).
Given a proper Dirichlet boundary value $f$, the Cauchy data $(u|_{\pa B},\gamma\pa_\nu u|_{\pa B})$ are uniquely determined since the boundary value problem (\ref{1})--(\ref{3}) is well-posed.
Here, $\nu$ denotes the unit normal vector to $\pa B$ directed into the exterior of $B$.
The inverse problem we consider in this paper is to determine $\gamma$ and $D$ from a single pair of Cauchy data $(u|_{\pa B},\gamma\pa_\nu u|_{\pa B})$.
Both uniqueness results and numerical methods of this inverse problem will be considered in the sequel.

Our studies are close to the existing references
\cite{Ike1998,Ike2002,WHB21,WHB23} where the enclosure method, range test and no-response test were applied to inverse conductivity problems with a polygonal interface (see also \cite{PS2010} for inverse electromagnetic scattering from polyhedral scatterers).
The idea of this paper is motivated by the one-wave factorization method proposed in \cite{EH2019,HL2020,MH22,MH2022} for inverse {\color{hgh} time-harmonic} scattering problems.
While the classical factorization method by Kirsch \cite{Kirsch1998} makes use of the scattering data over all incident and observation directions, the one-wave factorization method only requires the far-field data of a single incident plane wave for reconstructing scatterers of polygonal/polyhedral type.
It is particularly interesting within this paper that the constant conductivity coefficient can be also recovered by the one-wave factorization method.
All of the above-mentioned methods belong to the class of domain-defined sampling methods and are closely related to the analytical continuation of the solution (see \cite[Chapter 15]{NP}).
After finding a rough shape of the obstacle by the one-wave factorization method, we then employ a Newton-type iterative scheme to get a more precise reconstruction of the shape, which however relies heavily on {\color{hgh}proper} initial {\color{xxx}guesses} and {\color{hgh} efficient} forward solvers.

The remaining part of this paper is organized as follows.
Some preliminary results from factorization method based on Dirichlet-to-Neumann operators will be given in Section \ref{s2}.
Section \ref{s3} is devoted to the uniqueness results and numerical methods of the inverse problem from a single pair of Cauchy data.
Details about numerical simulation of the forward problem will be described in Section \ref{s3.5}.
Section \ref{sec_Newton} is concerned with the numerical implementation of Newton's iteration method.
Numerical examples will be reported in Section \ref{s4}.
Finally, a conclusion will be described in Section \ref{s5}.

\section{Preliminary results}\label{s2}
\setcounter{equation}{0}

In this section, we will introduce the factorization method for an elliptic boundary value problem \cite[Chapter 6]{Kirsch08}.
To circumvent the use of modified Sobolev spaces (see \cite[(6.6)--(6.8)]{Kirsch08}), we will develop the factorization method based on Dirichlet-to-Neumann operators.

We begin with the following single- and double-layer potentials with density $\varphi$:
\ben
(\mathcal S_{k,\pa\Om}\varphi)(x)=\int_{\pa\Om}\Phi_k(x,y)\varphi(y)ds(y),\quad x\in\R^2\ba\pa\Om,\\
(\mathcal D_{k,\pa\Om}\varphi)(x)=\int_{\pa\Om}\frac{\pa\Phi_k(x,y)}{\pa\nu(y)}\varphi(y)ds(y),\quad x\in\R^2\ba\pa\Om,
\enn
where $\Om$ is a domain in $\R^2$ with boundary $\pa\Om$, $\Phi_k(x,y)$ is the fundamental solution to the equation $\Delta u+k^2u=0$ (see \cite[Theorem 6.2]{Kress14} for $k=0$ and \cite[(3.106)]{CK19} for $k\neq0$), i.e.,
\ben
\Delta_x\Phi_k(x,y)+k^2\Phi_k(x,y)={\color{xxx}-\delta(x-y).}
\enn
Define the following boundary integral operators with density $\varphi$:
\ben
(S_{k,\pa\Om\ra\pa\widetilde\Om}\varphi)(x)=2\int_{\pa\Om}\Phi_k(x,y)\varphi(y)ds(y),\quad x\in\pa\widetilde\Om,\\
(K_{k,\pa\Om\ra\pa\widetilde\Om}\varphi)(x)=2\int_{\pa\Om}\frac{\pa\Phi_k(x,y)}{\pa\nu(y)}\varphi(y)ds(y),\quad x\in\pa\widetilde\Om,\\
(K'_{k,\pa\Om\ra\pa\widetilde\Om}\varphi)(x)=2\frac{\pa}{\pa\nu(x)}\int_{\pa\Om}\Phi_k(x,y)\varphi(y)ds(y),\quad x\in\pa\widetilde\Om,\\
(T_{k,\pa\Om\ra\pa\widetilde\Om}\varphi)(x)=2\frac{\pa}{\pa\nu(x)}\int_{\pa\Om}\frac{\pa\Phi_k(x,y)}{\pa\nu(y)}\varphi(y)ds(y),\quad x\in\pa\widetilde\Om,
\enn
where $\widetilde\Om$ is also a domain in $\R^2$ with boundary $\pa\widetilde\Om$.
For the case when $\pa\Om=\pa\widetilde\Om$, we define $S_{k,\pa\Om}:=S_{k,\pa\Om\ra\pa\Om}$, $K_{k,\pa\Om}:=K_{k,\pa\Om\ra\pa\Om}$, $K'_{k,\pa\Om}:=K'_{k,\pa\Om\ra\pa\Om}$ and $T_{k,\pa\Om}:=T_{k,\pa\Om\ra\pa\Om}$.
In the sequel, we will use the above notations with $\pa\Om\in\{\pa B,\pa D\}$, $\pa\widetilde\Om\in\{\pa B,\pa D\}$, and $k\in\{0,i\}$.
Here, $B$ and $D$ are given as in (\ref{1})--(\ref{3}).

\subsection{Boundary value problems and Dirichlet-to-Neumann operators}

Consider the Dirichlet boundary value problem
\be\label{5}
\Delta u_0=0 & \text{in}\; B,\\ \label{6}
u_0=f & \text{on}\;\pa B.
\en
The {\color{xxx} well-posedness of (\ref{5})--(\ref{6}) for $f\in H^{1/2}(\pa B)$ can be established by the} variational method (see \cite[Theorem 3.14]{Monk}).
However, {\color{hgh}we still need to investigate the {\color{xxx}well-posedness} of (\ref{5})--(\ref{6}) for $f\in H^{-1/2}(\pa B)$ (e.g., \cite[Section 10.2]{CK19}) in order to
establish the factorization method based on {\color{xxx}Dirichlet-to-Neumann operators}.}
To this end, we introduce several Sobolev spaces.
Let $\Om$ be a bounded domain with {\color{xxx}$C^2$ boundary} $\pa\Om$.
Define
\ben
H_\Delta^1(\Om):=\{u\in H^1(\Om):\Delta u\in L^2(\Om)\},\\
L_\Delta^2(\Om):=\{u\in L^2(\Om):\Delta u\in L^2(\Om)\},
\enn
where the Laplacian $\Delta$ is understood in the distributional sense.
Define the norms
\ben
\|u\|_{H_\Delta^1(\Om)}^2=\|u\|_{H^1(\Om)}^2+\|\Delta u\|_{L^2(\Om)}^2,\quad\forall u\in H_\Delta^1(\Om),\\
\|u\|_{L_\Delta^2(\Om)}^2=\|u\|_{L^2(\Om)}^2+\|\Delta u\|_{L^2(\Om)}^2,\quad\forall u\in L_\Delta^2(\Om).
\enn
Then we have the following trace theorem for $H_\Delta^1(\Om)$ and $L_\Delta^2(\Om)$.
\begin{theorem}\label{0404-3}
There exist constants $C_1,C_2,C_3>0$ such that
\be\label{0405-1}
\|\pa_\nu u\|_{H^{-1/2}(\pa \Om)}\leq C_1\|u\|_{H_\Delta^1(\Om)},\quad\forall u\in H_\Delta^1(\Om),\\ \label{0404-1}
\|u\|_{H^{-1/2}(\pa \Om)}\leq C_2\|u\|_{L_\Delta^2(\Om)},\quad\forall u\in L_\Delta^2(\Om),\\ \label{0404-2}
\|\pa_\nu u\|_{H^{-3/2}(\pa \Om)}\leq C_3\|u\|_{L_\Delta^2(\Om)},\quad\forall u\in L_\Delta^2(\Om).
\en
Moreover, for any $f\in H^{-1/2}(\pa \Om)$ the following statements are true:

(a) There exists {\color{xxx}$u_f\in H^1_\Delta(\Om)$} such that {\color{xxx}$\pa_\nu u_f=f$} in $H^{-1/2}(\pa \Om)$ and
\ben
{\color{xxx}\|u_f\|_{H_\Delta^1(\Om)}}\leq C_4\|f\|_{H^{-1/2}(\pa \Om)},
\enn
where $C_4>0$ is a constant independent of $f$;

(b) There exists $v_f\in L_\Delta^2(\Om)$ such that $v_f=f$ in $H^{-1/2}(\pa \Om)$ and
\ben
\|v_f\|_{L_\Delta^2(\Om)}\leq C_5\|f\|_{H^{-1/2}(\pa \Om)},
\enn
where $C_5>0$ is a constant independent of $f$.
\end{theorem}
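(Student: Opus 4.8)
The plan is to prove the three trace inequalities and the two extension results by combining the classical trace theory on $H^1(\Om)$ with a duality/Green's identity argument, treating $\Delta u$ as an independent datum. I would first recall the standard trace operators: for $u\in H^1(\Om)$ the Dirichlet trace $u|_{\pa\Om}\in H^{1/2}(\pa\Om)$ is bounded, and this map admits a bounded right inverse (the extension operator) $E\colon H^{1/2}(\pa\Om)\to H^1(\Om)$. The key tool throughout is Green's first identity: for $u\in H^1_\Delta(\Om)$ and a test function $\phi\in H^1(\Om)$,
\be\label{greenplan}
\int_{\Om}\nabla u\cdot\nabla\phi\,dx+\int_{\Om}(\Delta u)\,\phi\,dx=\langle\pa_\nu u,\phi|_{\pa\Om}\rangle_{\pa\Om},
\en
where the right-hand side is the duality pairing between $H^{-1/2}(\pa\Om)$ and $H^{1/2}(\pa\Om)$. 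This identity is exactly what defines $\pa_\nu u$ weakly when $\Delta u\in L^2(\Om)$ but $u$ is not smooth up to the boundary.

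To establish \eqref{0405-1}, I would take any $g\in H^{1/2}(\pa\Om)$, set $\phi=Eg$, and estimate the right side of \eqref{greenplan} by $\|\nabla u\|_{L^2}\|\nabla\phi\|_{L^2}+\|\Delta u\|_{L^2}\|\phi\|_{L^2}$. Using boundedness of $E$ and the definition of the $H^1_\Delta$ norm, this yields $|\langle\pa_\nu u,g\rangle|\leq C\|u\|_{H^1_\Delta(\Om)}\|g\|_{H^{1/2}(\pa\Om)}$; taking the supremum over $g$ with $\|g\|_{H^{1/2}}\le1$ gives the claimed bound on $\|\pa_\nu u\|_{H^{-1/2}}$. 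For \eqref{0404-1} and \eqref{0404-2}, the function $u$ sits only in $L^2_\Delta(\Om)$, so neither its Dirichlet trace in $H^{1/2}$ nor its normal derivative in $H^{-1/2}$ is available directly; one loses an order of regularity on the boundary. The standard device is the second Green identity tested against the solution $w\in H^2(\Om)$ of an auxiliary Dirichlet or Neumann problem with smooth boundary data, so that the interior integrals only involve $\|u\|_{L^2}$ and $\|\Delta u\|_{L^2}$ while the boundary term reproduces the trace to be estimated; elliptic regularity for the auxiliary problem on the $C^2$ domain supplies the needed $H^{1/2}$- (respectively $H^{3/2}$-) control of $w$ and $\pa_\nu w$. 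I would record these as the definitions of $u|_{\pa\Om}\in H^{-1/2}$ and $\pa_\nu u\in H^{-3/2}$ for $u\in L^2_\Delta(\Om)$ and read off the bounds from the duality.

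For the extension statements (a) and (b), the plan is to solve an auxiliary boundary value problem whose solution realizes the prescribed boundary datum with the right regularity. For (a), given $f\in H^{-1/2}(\pa\Om)$ I would solve the Neumann problem $\Delta u_f=0$ in $\Om$, $\pa_\nu u_f=f$ on $\pa\Om$ (adjusting for the compatibility/solvability condition by adding a suitable correction with nonzero but controlled Laplacian if necessary), obtaining $u_f\in H^1(\Om)$ with $\Delta u_f\in L^2(\Om)$ and $\|u_f\|_{H^1_\Delta}\lesssim\|f\|_{H^{-1/2}}$ by the variational theory and the trace bound just proved. For (b), given $f\in H^{-1/2}(\pa\Om)$ I would solve the Dirichlet problem $\Delta v_f=0$ in $\Om$, $v_f=f$ on $\pa\Om$, interpreting the boundary condition in the weaker sense appropriate to $H^{-1/2}$ data; by the transposition (very weak solution) method this gives $v_f\in L^2(\Om)$ with $\Delta v_f=0$, hence $v_f\in L^2_\Delta(\Om)$, and the stability estimate follows by duality against the regularizing Dirichlet solution operator.

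The main obstacle I expect is the low-regularity extension (b) and, correspondingly, the trace bounds \eqref{0404-1}--\eqref{0404-2}: for $u\in L^2_\Delta(\Om)$ one cannot invoke the ordinary $H^1$ trace theorem, and the very weak (transposition) formulation must be set up carefully so that the Dirichlet datum in $H^{-1/2}(\pa\Om)$ is matched in the correct dual sense and the resulting solution operator is bounded. Here the $C^2$ regularity of $\pa\Om$ is essential, since it is what guarantees the $H^2$ elliptic regularity of the auxiliary problems that underlies the duality. Once the transposition framework and the auxiliary elliptic estimates are in place, all five assertions follow by the same pairing-and-duality mechanism, and the constants $C_1,\dots,C_5$ are extracted directly from the norms of the extension and solution operators.
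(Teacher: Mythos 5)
Your proposal is correct in outline, but it reaches the theorem by a genuinely different route than the paper. For the three trace estimates \eqref{0405-1}--\eqref{0404-2} the paper simply cites Colton--Kress (\cite[p.~53 and Section 10.2]{CK19}); your Green's-identity/duality arguments are essentially the proofs behind those citations, so your version is just more self-contained. The real divergence is in (a) and (b). For (a), the paper sidesteps the compatibility condition entirely by solving the \emph{modified} Neumann problem $\Delta u_f-u_f=0$ in $\Om$, $\pa_\nu u_f=f$ on $\pa\Om$ (citing \cite[Theorem 3.15]{Monk}), which is uniquely solvable for every $f\in H^{-1/2}(\pa\Om)$ with no mean-value restriction; you instead solve the harmonic Neumann problem and repair the compatibility defect ``if necessary,'' which is workable but is the one genuinely vague spot in your plan --- you should exhibit the correction explicitly (e.g.\ solve $\Delta w=c\,|\pa\Om|/|\Om|$ in $\Om$ with $\pa_\nu w=c$ on $\pa\Om$, $c$ the mean of $f$) so that $C_4$ is actually controlled. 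For (b), the paper constructs $v_f$ potential-theoretically as $v_f=\mathcal S_{i,\pa\Om}\bigl(S_{i,\pa\Om}^{-1}f\bigr)$, exploiting the bounded invertibility of $S_{i,\pa\Om}:H^{-3/2}(\pa\Om)\ra H^{-1/2}(\pa\Om)$, a density argument in $C^{0,\alpha}(\pa\Om)$, the jump relations, and \cite[Theorem 10.12]{CK19}; your transposition (very weak solution) construction of the harmonic Dirichlet problem with $H^{-1/2}$ data is a classical alternative that achieves the same end. The paper's route buys immediate estimates from the boundedness of two fixed operators and stays consistent with the layer-potential machinery used throughout the rest of the paper; your route avoids boundary integral operators altogether and even yields a harmonic extension ($\Delta v_f=0$, versus $\Delta v_f=v_f$ for the paper's), but it obliges you to check that the transposition solution's trace, taken in the duality sense you adopt as the definition, really equals $f$ --- this rests on the surjectivity of $g\mapsto\pa_\nu w_g$ from $L^2(\Om)$ onto $H^{1/2}(\pa\Om)$ and on $H^2$ elliptic regularity for the $C^2$ domain, exactly the points you flag as the main obstacles.
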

\begin{proof}
For the proof of (\ref{0405-1}) we refer the reader to \cite[Page 53]{CK19}, and for the proofs of (\ref{0404-1}) and (\ref{0404-2}) we refer the reader to \cite[Section 10.2]{CK19}.
Statement (a) follows directly from the existence of a unique solution $u_f\in H^1(\Om)$ to the Neumann boundary value problem (\cite[Theorem 3.15]{Monk})
\ben
\Delta u_f-u_f=0 && \text{in }\Om,\\
\pa_\nu u_f=f && \text{on }\Om.
\enn
It remains to prove statement (b).
Since $C^{0,\alpha}(\pa \Om)$ is dense in $H^{-1/2}(\pa \Om)$, there exists a sequence $\{f_j\}\subset C^{0,\alpha}(\pa \Om)$ such that $\|f_j-f\|_{H^{-1/2}(\pa \Om)}\ra0$ as $j\ra\infty$.
Note that $S_{i,\pa \Om}:H^{-3/2}(\pa \Om)\ra H^{-1/2}(\pa \Om)$ has a bounded inverse (see \cite[Page 391]{CK19}).
By the jump relations \cite[Theorem 3.1]{CK19}, it holds that $\mathcal S_{i,\pa \Om}(S_{i,\pa \Om}^{-1}f_j)=f_j$ in $C^{0,\alpha}(\pa \Om)$ and thus $\mathcal S_{i,\pa \Om}(S_{i,\pa \Om}^{-1}f_j)=f_j$ in $H^{-1/2}(\pa \Om)$.
Passing to the limit $j\ra\infty$, we conclude from \cite[Theorem 10.12]{CK19} that $v_f:=\mathcal S_{i,\pa \Om}(S_{i,\pa \Om}^{-1}f)\in L_\Delta^2(\Om)$ satisfies $v_f=f$ in $H^{-1/2}(\pa \Om)$ and $\|v_f\|_{L^2_\Delta(\Om)}\leq C_5\|f\|_{H^{-1/2}(\pa \Om)}$.
\end{proof}
By Theorem \ref{0404-3}, we can establish the {\color{xxx} well-posedness} of (\ref{5})--(\ref{6}) for $f\in H^{-1/2}(\pa B)$.
\begin{theorem}\label{0404-4}
(i) For any $f\in H^{1/2}(\pa B)$, the boundary value problem (\ref{5})--(\ref{6}) has a unique solution $u_0\in H^1_\Delta(B)$ satisfying
\ben
\|u_0\|_{H^1_\Delta(B)}\leq C\|f\|_{H^{1/2}(\pa B)},
\enn
where $C>0$ is a constant independent of $f$.

(ii) For any $f\in H^{-1/2}(\pa B)$, the boundary value problem (\ref{5})--(\ref{6}) has a unique solution $u_0\in L^2_\Delta(B)$ satisfying
\ben
\|u_0\|_{L^2_\Delta(B)}\leq C\|f\|_{H^{-1/2}(\pa B)},
\enn
where $C>0$ is a constant independent of $f$.
\end{theorem}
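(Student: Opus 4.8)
The plan is to handle the two cases separately: part (i) drops out of the standard variational theory, while part (ii) reduces to a single a priori $L^2$ estimate combined with a density argument, with the trace bounds of Theorem \ref{0404-3} doing the bookkeeping at the boundary. For part (i), I would invoke the variational solvability of (\ref{5})--(\ref{6}) for $f\in H^{1/2}(\pa B)$ (\cite[Theorem 3.14]{Monk}), which yields a unique $u_0\in H^1(B)$ with $\|u_0\|_{H^1(B)}\le C\|f\|_{H^{1/2}(\pa B)}$. Since $\Delta u_0=0$ in $B$, we have $\Delta u_0\in L^2(B)$ with $\|\Delta u_0\|_{L^2(B)}=0$, so automatically $u_0\in H^1_\Delta(B)$ and $\|u_0\|_{H^1_\Delta(B)}=\|u_0\|_{H^1(B)}$; the claimed estimate is then immediate.

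For part (ii) the crux is the a priori bound
\[
\|u_0\|_{L^2(B)}\le C\|f\|_{H^{-1/2}(\pa B)}
\]
for the harmonic extension of a smooth datum $f\in H^{1/2}(\pa B)$, which I would establish by duality. Fix $g\in L^2(B)$ and let $w$ solve the auxiliary problem $\Delta w=g$ in $B$, $w=0$ on $\pa B$; by elliptic regularity $w\in H^2(B)\cap H^1_0(B)$ with $\|w\|_{H^2(B)}\le C\|g\|_{L^2(B)}$, hence $\|\pa_\nu w\|_{H^{1/2}(\pa B)}\le C\|g\|_{L^2(B)}$. Applying Green's identity to the pair $(u_0,w)$ and using $\Delta u_0=0$ together with $w|_{\pa B}=0$ collapses the formula to $\int_B u_0\,g\,dx=\langle f,\pa_\nu w\rangle_{\pa B}$, the bracket being the $H^{-1/2}$--$H^{1/2}$ duality. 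Bounding this pairing by $\|f\|_{H^{-1/2}(\pa B)}\|\pa_\nu w\|_{H^{1/2}(\pa B)}$ and taking the supremum over $\|g\|_{L^2(B)}\le1$ yields the estimate. It is precisely the appearance of the weaker $H^{-1/2}$ norm on $f$ here, rather than $H^{1/2}$, that produces the sharp bound.

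With the estimate in hand, existence for general $f\in H^{-1/2}(\pa B)$ follows by approximation. Choose $f_j\in H^{1/2}(\pa B)$ with $f_j\to f$ in $H^{-1/2}(\pa B)$ and let $u_j\in H^1_\Delta(B)$ be the solutions from part (i). Since each $u_j$ is harmonic, $\|u_j-u_l\|_{L^2_\Delta(B)}=\|u_j-u_l\|_{L^2(B)}\le C\|f_j-f_l\|_{H^{-1/2}(\pa B)}$, so $\{u_j\}$ is Cauchy in $L^2_\Delta(B)$ and converges to some $u_0\in L^2_\Delta(B)$ with $\Delta u_0=0$. The boundary condition is recovered from the trace estimate (\ref{0404-1}): since $\|u_j-u_0\|_{H^{-1/2}(\pa B)}\le C_2\|u_j-u_0\|_{L^2_\Delta(B)}\to0$ and $u_j|_{\pa B}=f_j\to f$, we obtain $u_0|_{\pa B}=f$, and the norm bound passes to the limit.

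Uniqueness I would again obtain by duality: if $u_0\in L^2_\Delta(B)$ is harmonic with $u_0|_{\pa B}=0$, then for any $g\in L^2(B)$ the same auxiliary $w$ and the generalized Green's identity give $\int_B u_0\,g\,dx=\langle u_0,\pa_\nu w\rangle_{\pa B}=0$, forcing $u_0=0$. The main technical point to verify carefully is the validity of this generalized Green's identity, which pairs the rough traces of $u_0\in L^2_\Delta(B)$ (living in $H^{-1/2}$ and $H^{-3/2}$) against the traces of $w\in H^2(B)$; this is exactly the formula underlying the trace definitions of Theorem \ref{0404-3}, and justifying it—rather than any of the norm estimates—is where the real care is required.
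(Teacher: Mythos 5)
Your proposal is correct, but for part (ii) it takes a genuinely different route from the paper. The paper's proof is a lifting argument: it invokes Theorem \ref{0404-3} (b) to produce an extension $v_f\in L^2_\Delta(B)$ of the datum $f$ (built there from the single-layer potential $\mathcal S_{i,\pa B}(S_{i,\pa B}^{-1}f)$) with $\|v_f\|_{L^2_\Delta(B)}\leq C_5\|f\|_{H^{-1/2}(\pa B)}$, then sets $w:=u_0-v_f$, which solves $\Delta w=-\Delta v_f$ in $B$, $w=0$ on $\pa B$, and closes the estimate via the variational theory (\cite[Example 5.15]{Cakoni14}). You instead run the classical transposition (duality) method: an a priori bound $\|u_0\|_{L^2(B)}\leq C\|f\|_{H^{-1/2}(\pa B)}$ obtained by pairing against the auxiliary problem $\Delta w=g$, $w|_{\pa B}=0$, with $H^2$ regularity on the disk, followed by density of $H^{1/2}(\pa B)$ in $H^{-1/2}(\pa B)$ and the trace bound (\ref{0404-1}) to pass to the limit. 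What each approach buys: the paper's route is very short because all the hard work was front-loaded into Theorem \ref{0404-3} (b) and it reuses that machinery verbatim; your route is self-contained modulo $H^2$ elliptic regularity and the generalized Green's identity for $L^2_\Delta(B)$ functions (which is exactly the content of \cite[Section 10.2]{CK19}, the same source the paper leans on for the traces). A genuine advantage of your argument is that it makes uniqueness in the class $L^2_\Delta(B)$ explicit: the paper's step ``$w=u_0-v_f$ solves the zero-Dirichlet problem, hence satisfies the variational estimate'' tacitly identifies a function that is a priori only in $L^2_\Delta(B)$ with the variational solution, and justifying that identification requires precisely the duality argument you spell out (a zero-trace harmonic function in $L^2_\Delta(B)$ vanishes). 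You also correctly isolate the one point needing real care, namely the validity of the generalized Green's identity pairing $u_0|_{\pa B}\in H^{-1/2}(\pa B)$ with $\pa_\nu w\in H^{1/2}(\pa B)$ and $\pa_\nu u_0\in H^{-3/2}(\pa B)$ with $w|_{\pa B}\in H^{3/2}(\pa B)$; that identity is what underlies the trace definitions in Theorem \ref{0404-3}, so your proof and the paper's ultimately rest on the same foundation. Part (i) is the same in both: a direct appeal to the standard variational theory, with $\Delta u_0=0$ making membership in $H^1_\Delta(B)$ automatic.
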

\begin{proof} Statement (i) follows directly from \cite[Example 5.15]{Cakoni14}.
It suffices to prove statement (ii).
By Theorem \ref{0404-3} (b), there exists $v_f\in L_\Delta^2(B)$ such that $v_f=f$ in $H^{-1/2}(\pa B)$ and $\|v_f\|_{L_\Delta^2(B)}\leq C_5\|f\|_{H^{-1/2}(\pa B)}$.
Then $w:=u_0-v_f$ satisfies the following boundary problem
\ben
\Delta w=-\Delta v_f && \text{in}\; B,\\
w=0 && \text{on}\;\pa B.
\enn
According to \cite[Example 5.15]{Cakoni14}, there exists a constant $\widetilde C>0$ such that
\ben
\|w\|_{H^1(B)}\leq\widetilde C\|\Delta v_f\|_{L^2(B)}.
\enn
Therefore,
\ben
\|u_0\|_{L^2_\Delta(B)}\leq\|w\|_{L^2_\Delta(B)}+\|v_f\|_{L^2_\Delta(B)}\leq\sqrt{\widetilde C^2+1}\|\Delta v_f\|_{L^2(D)}+\|v_f\|_{L^2_\Delta(B)}\leq C\|f\|_{H^{-1/2}(\pa B)},
\enn
where $C=C_5(\sqrt{\widetilde C^2+1}+1)>0$ is a constant independent of $f$.
The proof is now complete.
\end{proof}

Next, consider the following boundary value problem
\be\label{1'}
\Delta u=0 && \text{in }B\ba\ov{D},\\ \label{2'}
u=f && \text{on }\pa B,\\ \label{3'}
u=0 && \text{on }\pa D.
\en
The proof of Theorem \ref{0404-4} cannot be directly employed for the well-posedness of (\ref{1'})--(\ref{3'}) since $D$ is a polygon but Theorem \ref{0404-3} is valid for domain $\Om$ of class $C^2$ (see \cite[Section 10.2]{CK19}).
\begin{theorem}\label{0405-2}
(i) For any $f\in H^{1/2}(\pa B)$, the boundary value problem (\ref{1'})--(\ref{3'}) has a unique solution $u\in H^1_\Delta(B\ba\ov{D})$ satisfying
\ben
\|u\|_{H^1_\Delta(B\ba\ov{D})}\leq C\|f\|_{H^{1/2}(\pa B)},
\enn
where $C>0$ is a constant independent of $f$.

(ii) Assume $B_0$ is an open subset of $B$ such that $\ov{D}\subset B_0\subset\ov{B_0}\subset B$.
For any $f\in H^{-1/2}(\pa B)$, the boundary value problem (\ref{1'})--(\ref{3'}) has a unique solution $u\in L^2_\Delta(B\ba\ov{D})$ satisfying
\ben
\|u\|_{H_\Delta^1(B_0\ba\ov{D})} +\|u\|_{L^2_\Delta(B\ba\ov{B_0})}\leq C\|f\|_{H^{-1/2}(\pa B)},
\enn
where $C>0$ is a constant independent of $f$.
\end{theorem}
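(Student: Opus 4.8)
The plan is to reduce both statements to the well-posedness results for smooth domains (Theorems \ref{0404-3} and \ref{0404-4}) by introducing an auxiliary smooth interface that separates the corner singularities of $\partial D$ from the data imposed on $\partial B$. For statement (i) with $f\in H^{1/2}(\partial B)$, the standard variational formulation applies directly: the bilinear form $a(u,v)=\int_{B\backslash\overline{D}}\nabla u\cdot\nabla v\,dx$ is coercive on the closed subspace of $H^1(B\backslash\overline{D})$ consisting of functions vanishing on $\partial D$ (by a Poincaré-type inequality, since $\partial D$ carries a homogeneous Dirichlet condition), so Lax--Milgram yields a unique $u\in H^1(B\backslash\overline{D})$ with the asserted $H^1$ bound; the $\Delta u\in L^2$ part of the $H^1_\Delta$ norm is free because $\Delta u=0$. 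This is essentially \cite[Example 5.15]{Cakoni14} applied to the polygonal exterior domain, and I expect no difficulty here.

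For statement (ii) the obstruction is exactly the one flagged before the theorem: the trace/lifting machinery of Theorem \ref{0404-3} requires a $C^2$ boundary, and $\partial D$ is only Lipschitz with corners, so we cannot lift the rough datum $f\in H^{-1/2}(\partial B)$ directly into the polygonal domain. The idea is to localize the low regularity away from the corners. Introduce the smooth collar region $B\backslash\overline{B_0}$, whose two boundary components $\partial B$ and $\partial B_0$ are both $C^2$. On this annular domain I would apply Theorem \ref{0404-4}(ii) (or rather its proof) to construct a function $v\in L^2_\Delta(B\backslash\overline{B_0})$ carrying the boundary datum $f$ on $\partial B$: concretely, lift $f$ via the single-layer representation $v_f=\mathcal S_{i,\partial B}(S_{i,\partial B}^{-1}f)$ as in Theorem \ref{0404-3}(b), giving $v_f\in L^2_\Delta$ near $\partial B$ with $\|v_f\|_{L^2_\Delta}\le C\|f\|_{H^{-1/2}(\partial B)}$, and note that $v_f$ is automatically smooth in the interior, hence lies in $H^1_\Delta$ once restricted away from $\partial B$.

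The key step is then the splitting $u=v_f+w$, where $w$ is required to solve $\Delta w=0$ in $B\backslash\overline{D}$ with $w=-v_f$ on $\partial B$ and $w=0$ on $\partial D$. Because $v_f$ is analytic (indeed $C^\infty$) in a neighborhood of $\overline{B_0}\supset\overline{D}$, its trace on $\partial B$ is a genuine $H^{1/2}(\partial B)$ function, so $w$ falls under the smooth-datum case already handled in statement (i): it exists uniquely in $H^1_\Delta(B\backslash\overline{D})$ with $\|w\|_{H^1_\Delta(B\backslash\overline{D})}\le C\|v_f|_{\partial B}\|_{H^{1/2}(\partial B)}\le C\|f\|_{H^{-1/2}(\partial B)}$, the last inequality following from interior regularity estimates for the single-layer lift $v_f$. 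Assembling the pieces, on the interior region $B_0\backslash\overline{D}$ both $w$ and $v_f$ are $H^1_\Delta$, giving the $\|u\|_{H^1_\Delta(B_0\backslash\overline{D})}$ bound, while on the collar $B\backslash\overline{B_0}$ the $L^2_\Delta$ norm of $u$ is controlled by that of $v_f$ (from Theorem \ref{0404-3}(b)) plus the $H^1_\Delta\subset L^2_\Delta$ norm of $w$. Uniqueness follows in both cases from the homogeneous problem: if $f=0$ then $u\in H^1(B\backslash\overline{D})$ (by elliptic interior regularity, since the datum is now trivial on both boundaries) and the energy identity forces $\nabla u\equiv 0$, hence $u\equiv 0$ by the Dirichlet condition on $\partial D$. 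The main obstacle is organizing the regularity bookkeeping at the artificial interface $\partial B_0$ so that the two norm contributions match up cleanly, and in particular verifying that the interior trace of the rough single-layer lift genuinely lands in $H^{1/2}(\partial B)$ with the correct dependence on $\|f\|_{H^{-1/2}(\partial B)}$.
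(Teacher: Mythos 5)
Your statement (i) argument is fine and matches the paper (which simply cites the variational theory). But statement (ii) — the whole point of the theorem — contains a genuine error in the key step. Your decomposition $u=v_f+w$ with $\Delta w=0$ in $B\backslash\overline{D}$, $w=-v_f$ on $\partial B$, $w=0$ on $\partial D$ does not reproduce the original problem: the sum would have trace $f-f=0$ on $\partial B$ (not $f$), trace $v_f\neq0$ on $\partial D$ (not $0$), and moreover $\Delta u=\Delta v_f\neq 0$, because the lift $v_f=\mathcal S_{i,\partial B}(S_{i,\partial B}^{-1}f)$ satisfies $\Delta v_f=v_f$, not $\Delta v_f=0$ (the wavenumber $k=i$ is used precisely to make $S_{i,\partial B}$ invertible, at the cost of the lift not being harmonic). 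Worse, the justification you give for solvability of your corrector problem is false: the trace of $v_f$ on $\partial B$ is exactly $f\in H^{-1/2}(\partial B)$, which is \emph{not} in $H^{1/2}(\partial B)$ in general. Analyticity of $v_f$ "in a neighborhood of $\overline{B_0}$" gives smooth traces on $\partial B_0$ and on $\partial D$, but says nothing about the trace on $\partial B$, which is where the datum is rough by assumption. So the corrector problem you pose is both the wrong problem and not solvable by statement (i).

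The repair is to make the corrector carry \emph{zero} (or smooth) data where the datum is rough and absorb the source term $\Delta v_f=v_f$. The paper does this with a cut-off: take $\chi\in C^\infty(\overline{B})$ with $\chi=0$ on $B_0$ and $\chi=1$ near $\partial B$, and set $w:=u-\chi v_f$. Then $w=0$ on both $\partial B$ and $\partial D$ (no trace of $v_f$ on the polygon is ever needed), and $w$ solves the Poisson problem $\Delta w=-\Delta(\chi v_f)$ with right-hand side in $H^{-1}(B\backslash\overline{D})$ (the commutator terms $v_f\Delta\chi+2\nabla\chi\cdot\nabla v_f$ live where $v_f$ is smooth by interior regularity, and $\chi\Delta v_f=\chi v_f\in L^2$); this is handled by the variational method of statement (i), and the two-region estimate follows since $u=w$ on $B_0\backslash\overline{D}$ and $u=w+\chi v_f$ on $B\backslash\overline{B_0}$. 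Alternatively, a cut-off-free version of your idea works if you pose the corrector correctly: let $w$ solve $\Delta w=-v_f$ in $B\backslash\overline{D}$, $w=0$ on $\partial B$, $w=-v_f|_{\partial D}$ on $\partial D$; this datum is genuinely $H^{1/2}(\partial D)$ by interior regularity of $v_f$, and the bounds $\|v_f\|_{H^2(B_0)}+\|v_f|_{\partial D}\|_{H^{1/2}(\partial D)}\leq C\|v_f\|_{L^2_\Delta(B)}\leq C\|f\|_{H^{-1/2}(\partial B)}$ close the estimate. Either way, the corrector's inhomogeneity must sit on $\partial D$ and in the source term — never on $\partial B$.
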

\begin{proof}
Statement (i) follows directly from \cite[Theorem 3.14]{Monk}.
It suffices to prove statement (ii).
Let $\chi\in C^\infty(\ov{B})$ be a cut-off function such that $\chi(x)=0$ for $x\in B_0$ and $\chi(x)=1$ for $x$ in the vicinity of $\pa B$.
By Theorem \ref{0404-3} (b), there exists $v_f\in L_\Delta^2(B)$ such that $v_f=f$ in $H^{-1/2}(\pa B)$ and
\be\label{0503-1}
\|v_f\|_{L_\Delta^2(B)}\leq C_5\|f\|_{H^{-1/2}(\pa B)}.
\en
Set $w:=u-\chi v_f$, then $w=u$ in $B_0\ba\ov{D}$.
Moreover, $w$ satisfies the boundary value problem
\ben
\Delta w=-\Delta(\chi v_f) && \text{in }B\ba\ov{D},\\
w=0 && \text{on }\pa B,\\
w=0 && \text{on }\pa D.
\enn
Noting that $\Delta(\chi v_f)=v_f\Delta\chi+2\nabla\chi\cdot\nabla v_f+\chi\Delta v_f\in H^{-1}(B\ba\ov{D})$, we conclude from \cite[Theorem 3.14]{Monk} that
\ben
\|w\|_{H^1(B\ba\ov{D})}\leq\widetilde C\|\Delta(\chi v_f)\|_{H^{-1}(B\ba\ov{D})}\leq\widehat C\|v_f\|_{L_\Delta^2(B\ba\ov{D})},
\enn
where $\widetilde C,\widehat C>0$ are constants independent of $f$.
Therefore,
\ben
\|u\|_{H_\Delta^1(B_0\ba\ov{D})}=\|{\color{xxx}u}\|_{H^1(B_0\ba\ov{D})}=\|w\|_{H^1(B_0\ba\ov{D})}\leq\|w\|_{H^1(B\ba\ov{D})}\leq\widehat C\|v_f\|_{L_\Delta^2(B\ba\ov{D})},\\
\|u\|_{L^2_\Delta(B\ba\ov{B_0})}=\|u\|_{L^2(B\ba\ov{B_0})}\leq\|w\|_{L^2(B\ba\ov{B_0})}+\|\chi v_f\|_{L^2(B\ba\ov{B_0})}\leq\widehat C\|v_f\|_{L_\Delta^2(B\ba\ov{D})}.
\enn
In view of \eqref{0503-1}, the proof is complete.
\end{proof}
\begin{remark}\label{0413-1}
(i) {\color{hgh} $u_0$ and $u$ are harmonic and analytic in $B$ and $B\ba\ov{D}$, respectively;} see \cite[Definition 6.1 and Theorem 6.6]{Kress14}.

(ii) {\color{xxx}Let $B_0$, $\chi$ and $v_f$ be given as in the proof of Theorem \ref{0405-2} (ii).}
For any $f\in H^{-1/2}(\pa B)$ and $g\in H^{1/2}(\pa D)$, the boundary value problem
\be\label{0503-2}
\Delta u=0 && \text{in }B\ba\ov{D},\\ \label{0503-3}
u=f && \text{on }\pa B,\\ \label{0503-4}
u=g && \text{on }\pa D,
\en
has a unique solution {\color{xxx}$u\in L^2_\Delta(B\ba\ov{D})$} satisfying
\ben
\|u\|_{H_\Delta^1(B_0\ba\ov{D})} +\|u\|_{L^2_\Delta(B\ba\ov{B_0})}\leq C(\|f\|_{H^{-1/2}(\pa B)}+\|g\|_{H^{1/2}(\pa D)}),
\enn
where $C>0$ is a constant independent of $f$ and $g$.
{\color{xxx}Actually, by} \cite[Theorem 3.12]{Monk} there exists $u_g\in H^1_\Delta(B\ba\ov{D})$ such that $u_g=g$ in $H^{1/2}(\pa B)$ and $\|u_g\|_{H_\Delta^1(\Om)}\leq c\|g\|_{H^{1/2}(\pa \Om)}$ for a constant $c>0$ independent of $g$.
Therefore, the assertion follows by setting $w:=u-[(1-\chi)u_g+\chi v_f]$ and a similar argument as above.
Obviously, the solution $u$ to \eqref{0503-2}--\eqref{0503-4} is also harmonic and analytic in $B\ba\ov{D}$.

(iii) In view of Theorem \ref{0404-4}, define the Dirichlet-to-Neumann operator $\Lambda_0$ corresponding to (\ref{5})--(\ref{6}) by $\Lambda_0f:=\pa_\nu u_0$.
By Theorems \ref{0404-3} and \ref{0404-4}, we know $\Lambda_0:H^{1/2}(\pa B)\ra H^{-1/2}(\pa B)$ and $\Lambda_0:H^{-1/2}(\pa B)\ra H^{-3/2}(\pa B)$ are bounded.
By the interpolation property of the Sobolev
spaces (see \cite[Theorem 8.13]{Kress14}), $\Lambda_0:H^s(\pa B)\ra H^{s-1}(\pa B)$ is bounded for $s\in[-\frac12,\frac12]$.

(iv) In view of Theorem \ref{0405-2}, define the Dirichlet-to-Neumann operator $\Lambda_D$ corresponding to (\ref{1'})--(\ref{3'}) by $\Lambda_Df:=\pa_\nu u$.
By Theorems \ref{0404-3} and \ref{0405-2}, we know $\Lambda_D:H^{1/2}(\pa B)\ra H^{-1/2}(\pa B)$ and $\Lambda_D:H^{-1/2}(\pa B)\ra H^{-3/2}(\pa B)$ are bounded.
By the interpolation property of the Sobolev
spaces (see \cite[Theorem 8.13]{Kress14}), $\Lambda_D:H^s(\pa B)\ra H^{s-1}(\pa B)$ is bounded for $s\in[-\frac12,\frac12]$.
\end{remark}
Now we return to the boundary value problem (\ref{1})--(\ref{3}).
We define the Dirichlet-to-Neumann operator $\Lambda_{\ga,D}$ by $\Lambda_{\ga,D} f:=\gamma\pa_\nu u$ on $\pa B$, where $u$ solves (\ref{1})--(\ref{3}).
Obviously, (\ref{1'})--(\ref{3'}) is a special case of (\ref{1})--(\ref{3}) with $\ga=1$ and thus $\Lambda_{\ga,D}=\ga\Lambda_D$.
With this notation, the Cauchy data $(u|_{\pa B},\gamma\pa_\nu u|_{\pa B})$ to (\ref{1})--(\ref{3}) can be represented as $(f,\ga\Lambda_Df)$.

\subsection{Factorization method}

Below we shall derive a factorization of $\Lambda_D-\Lambda_0$.
To this end, we define the operator $G_D$ by $G_Dg=\pa_\nu w$, where $w$ is the unique solution to the following boundary value problem:
\be\label{7}
\Delta w=0 & \text{in}\;B\ba\ov{D},\\ \label{8}
w=0 & \text{on}\;\pa B,\\ \label{9}
w=g & \text{on}\;\pa D.
\en
\begin{theorem}\label{t3.3}
The operator $G_D:H^{1/2}(\pa D)\ra L^2(\pa B)$ is well-defined, bounded and compact.
\end{theorem}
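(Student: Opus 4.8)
The plan is to exploit the fact that the obstacle $D$ is compactly contained in $B$, so that $w$ is harmonic in a fixed annular neighborhood of $\pa B$ that is completely free of the corner singularities of the polygon; the Neumann trace $\pa_\nu w$ on $\pa B$ will then turn out to be far smoother than merely $L^2$, which simultaneously yields boundedness and, via a compact Sobolev embedding, compactness.

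First I would establish that $G_D$ is well-defined. For $g\in H^{1/2}(\pa D)$ the boundary value problem (\ref{7})--(\ref{9}) is precisely the special case $f=0$ of \eqref{0503-2}--\eqref{0503-4}, so Remark \ref{0413-1}(ii) provides a unique solution $w\in L^2_\Delta(B\ba\ov{D})$ with
\[
\|w\|_{H^1_\Delta(B_0\ba\ov{D})}+\|w\|_{L^2_\Delta(B\ba\ov{B_0})}\leq C\|g\|_{H^{1/2}(\pa D)}.
\]
By Remark \ref{0413-1}(ii), $w$ is harmonic in $B\ba\ov{D}$, so $\Delta w=0$ and the second term reduces to $\|w\|_{L^2(B\ba\ov{B_0})}$. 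Since $\ov{D}\subset B_0$, the function $w$ is harmonic in the whole annular region $B\ba\ov{B_0}$, a fixed neighborhood of $\pa B$ on which $w=0$ along the analytic curve $\pa B$.

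The core step is a regularity upgrade near $\pa B$. I would introduce an intermediate domain $B_1$ with $\ov{B_0}\subset B_1\subset\ov{B_1}\subset B$ and consider the collar $B\ba\ov{B_1}$. On this collar $w$ solves $\Delta w=0$ with homogeneous Dirichlet data on the smooth boundary $\pa B$, so standard interior-and-boundary elliptic estimates for the homogeneous Dirichlet problem—most cleanly obtained by reflecting $w$ across the circle $\pa B$ (Schwarz/Kelvin reflection) to a harmonic function on a two-sided neighborhood and then applying an interior estimate for harmonic functions—yield, for every $s\geq 0$,
\[
\|w\|_{H^{s}(B\ba\ov{B_1})}\leq C_s\|w\|_{L^2(B\ba\ov{B_0})}.
\]
Taking $s=2$ and applying the trace theorem gives $\pa_\nu w\in H^{1/2}(\pa B)$ together with $\|\pa_\nu w\|_{H^{1/2}(\pa B)}\leq C\|w\|_{L^2(B\ba\ov{B_0})}$. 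Chaining this with the well-posedness estimate above shows that $G_D:H^{1/2}(\pa D)\ra H^{1/2}(\pa B)$ is well-defined and bounded, and since the embedding $H^{1/2}(\pa B)\hookrightarrow L^2(\pa B)$ is compact by Rellich's theorem on the compact curve $\pa B$, the composition $G_D:H^{1/2}(\pa D)\ra L^2(\pa B)$ is compact.

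I expect the main obstacle to be exactly this regularity upgrade: the well-posedness theory only delivers an $L^2_\Delta$ bound on $w$ near $\pa B$, whose low regularity is forced by the corner singularities of the polygon $D$ and by $g$ lying merely in $H^{1/2}(\pa D)$. The point that resolves it is the geometric separation $\ov{D}\subset B_0\subset B$, which confines all corner singularities to a neighborhood of $\pa D$ and leaves $w$ harmonic, with vanishing Dirichlet data, on a fixed analytic collar around $\pa B$; there the reflection argument converts the boundary estimate into an interior estimate for harmonic functions and produces smoothness of $\pa_\nu w$ of arbitrary order. Once this smoothing is in hand, boundedness is immediate and compactness follows with no additional work from the compact Sobolev embedding.
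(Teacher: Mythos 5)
Your proof is correct, and it rests on the same structural insight as the paper's: the corner singularities of the polygon and the low regularity of $g$ are confined to a neighborhood of $\pa D$, so $w$ is smooth on a fixed collar around $\pa B$, its Neumann trace lands in a Sobolev space strictly better than $L^2(\pa B)$, and compactness follows from a compact embedding of boundary Sobolev spaces. Where you differ is in how the regularity upgrade near $\pa B$ is carried out. The paper runs an induction over a nested family of smooth domains $\Om_1\subset\cdots\subset\Om_J$ between $D$ and $\pa B$: at each stage it multiplies $w$ by a cut-off $\chi_m$ and applies global elliptic regularity for the Dirichlet problem (Gilbarg--Trudinger, Theorems 8.12 and 8.13) to the inhomogeneous problem solved by $w\chi_m$, obtaining $\|w\|_{H^m(B\ba\ov{\Om_m})}\leq C_m\|g\|_{H^{1/2}(\pa D)}$ and hence $\pa_\nu w\in H^{m-3/2}(\pa B)$ for every $m$. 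You instead exploit the facts that $B$ is a disk and that $w$ has vanishing Dirichlet data on $\pa B$: the odd Kelvin extension is harmonic on a two-sided neighborhood of the circle, and interior estimates for harmonic functions then control all derivatives on the collar by $\|w\|_{L^2}$ in one stroke. Your route is more elementary (no boundary-regularity machinery, only interior estimates for harmonic functions) and immediately gives smoothness of every order; its price is that it is tied to the Laplacian and to the circular (or at least analytic) geometry of $\pa B$, whereas the paper's cut-off induction works verbatim for any smooth outer boundary and for more general elliptic operators. Note also that the arbitrary-order conclusion ${\rm Ran}\,G_\Om\subset H^{m-3/2}(\pa B)$ for all $m\in\Z_+$ is reused later (Remark \ref{0915} (i) and Theorem \ref{thm240527} (iii)); your claimed estimate for every $s\geq0$ does deliver this, so nothing downstream breaks.

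Two small points would make your argument airtight: (i) the operator $G_D$ is defined via the variational $H^1$ solution of (\ref{7})--(\ref{9}), so you should remark that the $L^2_\Delta$ solution furnished by Remark \ref{0413-1} (ii) coincides with it by uniqueness; (ii) the reflection must be invoked in its $H^1$ (weak) form, since a priori $w$ is only an $H^1$ function with zero trace near $\pa B$ rather than continuous up to the boundary --- the statement that the odd Kelvin extension of such a function is again $H^1$ and weakly harmonic is standard, but it is the precise lemma your argument needs, and citing the classical (continuous-up-to-the-boundary) Schwarz reflection principle alone would leave a gap.
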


\begin{proof}
Let $g\in H^{1/2}(\pa D)$ in \eqref{9}.
According to \cite[Theorem 3.14]{Monk}, (\ref{7})--(\ref{9}) has a unique solution $w\in H^1(B\ba\ov{D})$ satisfying
\be\label{10}
\left\|w\right\|_{H^1(B\ba\ov{D})}\leq C\left\|g\right\|_{H^{1/2}(\pa D)},
\en
where the constant $C>0$ is independent of $g$.
{\color{xxx}Since} $D$ is contained in the interior of $B$, for any $J\!\in\!\Z_+$ there exists a set of domains $\{\Om_j\}_{j=1}^J$ with boundaries $\pa\Om_j\in C^\infty${\color{xxx}, $j\!=\!1,\!\cdots\!,J$,} such that
\ben
D\subset\ov{D}\subset\Om_1\subset\ov{\Om_1}\subset\Om_2\subset\ov{\Om_2}\subset\cdots\subset\Om_J\subset\ov{\Om_J}\subset B.
\enn
We claim that for any $m\in\{1,2,\cdots,J\}$ there exists a constant $C_m>0$ independent of $g$ such that $\|w\|_{H^m(B\ba\ov{\Om_m})}\leq C_m\|g\|_{H^{1/2}(\pa D)}$.
Actually, by setting $\chi_1\in C^\infty(\ov{B})$ to be a cut-off function such that $\chi_1(x)=0$ for $x\in\Om_1$ and $\chi_1(x)=1$ for $x\in B\ba\ov{\Om_2}$, we know $\tilde w_1:=w\chi_1$ is the unique solution to
\ben
\Delta\tilde w_1=\tilde p_1\quad\text{in}\;B\ba\ov{\Om}_1,&&\tilde w_1=0\quad\text{on}\;\pa B\cup\pa\Om_1,
\enn
where $\tilde p_1:=w\Delta\chi_1+2\nabla w\cdot\nabla\chi_1$ satisfies $\|\tilde p_1\|_{L^2(B\ba\ov{\Om_1})}\leq\widetilde C_1\|g\|_{H^{1/2}(\pa B)}$ due to (\ref{10}).
Here $\widetilde C_1>0$ is a constant independent of $g$.
It follows from the regularity of elliptic equation (see \cite[Theorem 8.12]{GT}) that $\|w\|_{H^2(B\ba\ov{\Om_2})}\leq\|\tilde w_1\|_{H^2(B\ba\ov{\Om_1})}\leq C_2\|g\|_{H^{1/2}(\pa B)}$.

Now, suppose that $\|w\|_{H^m(B\ba\ov{\Om_m})}\leq C_m\|g\|_{H^{1/2}(\pa D)}$ for some $m\in\Z_+$.
Set $\chi_m\in C^\infty(\ov{B})$ to be a cut-off function such that $\chi_m(x)=0$ for $x\in\Om_m$ and $\chi_m(x)=1$ for $x\in B\ba\ov{\Om_{m+1}}$.
Then $\tilde w_m:=w\chi_m$ is the unique solution to
\ben
\Delta\tilde w_m=\tilde p_m\quad\text{in }B\ba\ov{\Om_m},&&\tilde w_m=0\quad\text{on }\pa B\cup\pa\Om_m,
\enn
where $\tilde p_m:=w\Delta\chi_m+2\nabla w\cdot\nabla\chi_m$ satisfies $\|\tilde p_m\|_{H^{m-1}(B\ba\ov{\Om_m})}\leq\widetilde C_m\|g\|_{H^{1/2}(\pa D)}$ due to the inductive hypothesis.
Here $\widetilde C_m>0$ is a constant independent of $g$.
It follows from the regularity of elliptic equation (see \cite[Theorem 8.13]{GT}) that $\|w\|_{H^{m+1}(B\ba\ov{\Om_{m+1}})}\leq\|\tilde w_m\|_{H^{m+1}(B\ba\ov{\Om_m})}\leq C_{m+1}\|g\|_{H^{1/2}(\pa D)}$.

By induction, we have $\|w\|_{H^m(B\ba\ov{\Om_m})}\leq C_m\|g\|_{H^{1/2}(\pa D)}$ for all $m\in\{1,2,\cdots,J\}$ and thus $\|\pa_\nu w\|_{H^{m-3/2}(\pa B)}\leq c_m\|g\|_{H^{1/2}(\pa D)}$ for all $m\in\{1,2,\cdots,J\}$.
Here $c_m>0$ is a constant independent of $g$ for all $m\in\{1,2,\cdots,J\}$.
Finally, the compactness of $G_D:H^{1/2}(\pa D)\ra L^2(\pa B)$ follows from the compact embedding of $H^{m-3/2}(\pa B)$ into $L^2(\pa B)$ provided $J\geq m\geq2$.
\end{proof}

To obtain a factorization of $\Lambda_D-\Lambda_0$, we need to introduce the boundary integral operators defined on $\pa D$.
We begin with the Green's function $K(x,y)$ to the Laplace's equation in $B$ with the Dirichlet boundary condition:
\ben
K(x,y):=\Phi_0(x,y)+k(x,y),\quad x,y
\in B,\quad x\neq y,
\enn
where $\Phi_0(x,y)$ is the fundamental solution to the Laplace's equation in $\R^2$ (see \cite[Theorem 6.2]{Kress14}) and $u_0\!=\!k(\cdot,y)$ solves (\ref{5})--(\ref{6}) with $f\!:=\!-\Phi_0(\cdot,y)$ on $\pa B$.
We note that $K(x,y)$ is harmonic and analytic in $x\!\in\!B\ba\{y\}$ and $K(\cdot,y)\!=\!0$ on $\pa B$.
Moreover, we have the reciprocity relation (symmetric property) for $K(x,y)$ as follows.
\begin{lemma}\label{lem211025}
$k(x,y)=k(y,x)$ and $K(x,y)=K(y,x)$ for all $x,y\in B$, $x\neq y$.
\end{lemma}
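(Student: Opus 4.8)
The plan is to deduce the symmetry of $K$ from Green's second identity and then read off the symmetry of $k$ for free. Since the free-space fundamental solution satisfies $\Phi_0(x,y)=\Phi_0(y,x)$ and $K=\Phi_0+k$, the two claimed identities are in fact equivalent, so it suffices to prove $K(x,y)=K(y,x)$. Fix distinct points $x,y\in B$ and set $u:=K(\cdot,x)$ and $v:=K(\cdot,y)$. By construction $u$ is harmonic in $B\ba\{x\}$, $v$ is harmonic in $B\ba\{y\}$, both vanish on $\pa B$, and near its singular point each equals the logarithmic kernel $\Phi_0$ plus a harmonic remainder ($k(\cdot,x)$, resp.\ $k(\cdot,y)$) that, because $B$ is a disk, is analytic up to $\pa B$; this regularity is exactly what is needed to apply Green's identity on subdomains.

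For small $\eps>0$ put $B_\eps:=B\ba\big(\ov{B(x,\eps)}\cup\ov{B(y,\eps)}\big)$. On $B_\eps$ both $u$ and $v$ are harmonic, so Green's second identity gives
\[
\int_{\pa B_\eps}\big(u\,\pa_\nu v-v\,\pa_\nu u\big)\,ds=\int_{B_\eps}\big(u\,\Delta v-v\,\Delta u\big)\,dx=0 .
\]
The boundary $\pa B_\eps$ consists of $\pa B$ together with the two circles $\pa B(x,\eps)$ and $\pa B(y,\eps)$. On $\pa B$ both $u$ and $v$ vanish, so that piece contributes nothing, and it remains to evaluate the two circle integrals in the limit $\eps\to0$.

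On $\pa B(x,\eps)$ the density $v$ is smooth while $u$ carries the singularity of $\Phi_0(\cdot,x)$; inserting the asymptotics of $\Phi_0$ and $\pa_\nu\Phi_0$ on a shrinking circle --- exactly the computation underlying Green's representation formula, cf.\ \cite{Kress14} --- shows that the smooth remainder $k(\cdot,x)$ makes no contribution and that this circle reproduces $-v(x)=-K(x,y)$ in the limit. By the symmetric role of $x$ and $y$, the circle $\pa B(y,\eps)$ reproduces $+u(y)=+K(y,x)$. Passing to the limit in the displayed identity therefore yields $-K(x,y)+K(y,x)=0$, i.e.\ $K(x,y)=K(y,x)$, and hence $k(x,y)=K(x,y)-\Phi_0(x,y)=K(y,x)-\Phi_0(y,x)=k(y,x)$.

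The only point requiring care is the evaluation of the two vanishing-circle integrals: one must keep track of the orientation of the outward normal of $B_\eps$ (which points into the excised disks) and verify that the $O(\eps\ln\eps)$ and $O(\eps)$ terms coming from the smooth parts and from $u\,\pa_\nu v$ (resp.\ $v\,\pa_\nu u$) indeed vanish, while the singular part reproduces the point value via the mean-value behaviour of the smooth factor. I expect this to be purely routine bookkeeping once the logarithmic asymptotics are inserted, with no conceptual obstacle.
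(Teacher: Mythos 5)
Your proof is correct, but it takes a genuinely different route from the paper's. You give the classical Green's-function symmetry argument: excise small disks $B(x,\eps)$ and $B(y,\eps)$, apply Green's second identity to $K(\cdot,x)$ and $K(\cdot,y)$ on the punctured domain, use $K(\cdot,x)=K(\cdot,y)=0$ on $\pa B$ to kill the outer boundary term, and recover the point values from the logarithmic singularities in the shrinking-circle limit; symmetry of $k$ then follows from $k=K-\Phi_0$. The paper instead works in the opposite direction: it proves $k(x,y)=k(y,x)$ first, never excising singularities inside $B$. It combines four boundary-integral identities over $\pa B$ --- Green's second theorem applied to $\Phi_0(\cdot,x),\Phi_0(\cdot,y)$ in the \emph{exterior} annulus $B_R\setminus\ov{B}$ with $R\to\infty$ (needed because $\Phi_0$ does not vanish on $\pa B$), Green's second theorem applied to the harmonic functions $k(\cdot,x),k(\cdot,y)$ in $B$, and the Green representation formula for $k(x,y)$ and $k(y,x)$ --- and sums them so that the total collapses to $\int_{\pa B}\{\pa_\nu K(\cdot,y)K(\cdot,x)-K(\cdot,y)\pa_\nu K(\cdot,x)\}\,ds$, which vanishes by the boundary condition. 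What your approach buys is directness and economy: the entire singularity analysis is the single standard excision computation, and no exterior-domain limit is needed because $K$, unlike $\Phi_0$, vanishes on $\pa B$. What the paper's approach buys is that all singular-point analysis is delegated to a cited black box (the representation formula, \cite[Theorem 6.5]{Kress14}), so every identity actually manipulated involves only smooth integrands on $\pa B$; the price is the extra $R\to\infty$ decay argument. One small caveat on your write-up: you state the shrinking-circle limits as expected rather than carrying them out, but you correctly identify the orientation issue and the $O(\eps\ln\eps)$ bookkeeping, and that computation is indeed standard, so the gap is presentational rather than substantive.
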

\begin{proof}
It suffices to prove $k(x,y)=k(y,x)$ for all $x,y\in B$.
Setting $B_R$ to be a disk centered at origin with radius $R>0$ large enough and using Green's second theorem (see \cite[(6.2)]{Kress14}) in $B_R\ba\ov{B}$, we have for $x,y\in B$ that
\be\no
&&\int_{\pa B}\left\{\frac{\pa\Phi_0(z,y)}{\pa\nu(z)}\Phi_0(z,x)-\Phi_0(z,y)\frac{\pa\Phi_0(z,x)}{\pa\nu(z)}\right\}ds(z)\\ \label{211026-2}
&=&\int_{\pa B_R}\left\{\frac{\pa\Phi_0(z,y)}{\pa\nu(z)}\Phi_0(z,x)-\Phi_0(z,y)\frac{\pa\Phi_0(z,x)}{\pa\nu(z)}\right\}ds(z),
\en
where $\nu$ denotes the unit normal vector to $\pa B$ or $\pa B_R$ directed into the exterior of $B$ or $B_R$, respectively.
Note that for $x,y\in B$ and $|z|$ large enough there exists a constant $C>0$ such that
\ben
&|\Phi_0(x,z)-\Phi_0(y,z)|\leq|x-y|\max_{x\in B}|\nabla_x\Phi_0(x,z)|\leq C/|z|,&\\
&|\pa_{\nu(z)}\Phi_0(x,z)-\pa_{\nu(z)}\Phi_0(y,z)|\leq|x-y|\max_{x\in B}\left|\nabla_x[\pa_{\nu(z)}\Phi_0(x,z)]\right|\leq C/|z|^2.&
\enn
Therefore,
\ben
&&\left|\int_{\pa B_R}\left\{\frac{\pa\Phi_0(z,y)}{\pa\nu(z)}[\Phi_0(z,x)-\Phi_0(z,y)]-\Phi_0(z,y)\left[\frac{\pa\Phi_0(z,x)}{\pa\nu(z)}-\frac{\pa\Phi_0(z,y)}{\pa\nu(z)}\right]\right\}ds(z)\right|\\
&\leq&2\pi R\left(\frac CR\cdot\frac CR+\ln R\cdot\frac C{R^2}\right).
\enn
Passing to the limit $R\!\ra\!\infty$, we deduce from (\ref{211026-2}) that
\ben
\int_{\pa B}\left\{\frac{\pa\Phi_0(z,y)}{\pa\nu(z)}\Phi_0(z,x)-\Phi_0(z,y)\frac{\pa\Phi_0(z,x)}{\pa\nu(z)}\right\}ds(z)=0.
\enn
Applying Green's second theorem over $B$ to $k(\cdot,y)$ and $k(\cdot,x)$, we have for $x,y\!\in\!B$ that
\ben
\int_{\pa B}\left\{\frac{\pa k(z,y)}{\pa\nu(z)}k(z,x)-k(z,y)\frac{\pa k(z,x)}{\pa\nu(z)}\right\}ds(z)=0.
\enn
Using the Green's formula (see \cite[Theorem 6.5]{Kress14}) we have for $x,y\!\in\!B$ that
\ben
k(x,y)&=&\int_{\pa B}\left\{\frac{\pa k(z,y)}{\pa\nu(z)}\Phi_0(z,x)-k(z,y)\frac{\pa\Phi_0(z,x)}{\pa\nu(z)}\right\}ds(z),\\
-k(y,x)&=&-\int_{\pa B}\left\{\frac{\pa k(z,x)}{\pa\nu(z)}\Phi_0(z,y)-k(z,x)\frac{\pa\Phi_0(z,y)}{\pa\nu(z)}\right\}ds(z).
\enn
Taking the sum of the above four equalities gives
\ben
k(x,y)-k(y,x)=\int_{\pa B}\left\{\frac{\pa K(z,y)}{\pa\nu(z)}K(z,x)-K(z,y)\frac{\pa K(z,x)}{\pa\nu(z)}\right\}ds(z),\quad x,y\in B.
\enn
Now the proof is completed by using the boundary condition $K(\cdot,x)\!=\!K(\cdot,y)\!=\!0$ on $\pa B$.
\end{proof}
Using the Green's function {\color{xxx}$K(x,y)$} one may represent the solution $u_0$ to (\ref{5})--(\ref{6}) in terms of the boundary data $f$.
\begin{lemma}\label{t3.4}
For $f\in H^s(\pa B)$ with $s\in[-\frac12,\frac12]$, the solution $u_0\in L^2_\Delta(B)$ to (\ref{5})--(\ref{6}) is given by
\be\label{211025-1}
u_0(x)=-\int_{\pa B}\frac{\pa K(x,y)}{\pa\nu(y)}f(y)ds(y),\quad x\in B.
\en
In particular, $u_0\in H^1_\Delta(B)$ provided $f\in H^{1/2}(\pa B)$.
\end{lemma}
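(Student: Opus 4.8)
The plan is to derive the representation (\ref{211025-1}) first for smooth boundary data, where the classical Green's representation formula is available, and then to extend it to $f\in H^s(\pa B)$ by density, using the well-posedness from Theorem \ref{0404-4} together with the symmetry of the Green's function established in Lemma \ref{lem211025}.

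First I would take $f\in C^\infty(\pa B)$, so that the corresponding solution $u_0$ is smooth up to $\pa B$ and Green's formula \cite[Theorem 6.5]{Kress14} applies:
\[
u_0(x)=\int_{\pa B}\left\{\frac{\pa u_0}{\pa\nu}(y)\,\Phi_0(x,y)-u_0(y)\,\frac{\pa\Phi_0(x,y)}{\pa\nu(y)}\right\}ds(y),\quad x\in B.
\]
Since $k(\cdot,x)$ is harmonic in $B$, Green's second theorem \cite[(6.2)]{Kress14} applied to $u_0$ and $k(\cdot,x)$ produces the vanishing identity
\[
0=\int_{\pa B}\left\{\frac{\pa u_0}{\pa\nu}(y)\,k(y,x)-u_0(y)\,\frac{\pa k(y,x)}{\pa\nu(y)}\right\}ds(y).
\]
Adding the two identities and using the symmetry $k(y,x)=k(x,y)$ of Lemma \ref{lem211025} to recombine the kernels into $K(x,y)=\Phi_0(x,y)+k(x,y)$, I obtain
\[
u_0(x)=\int_{\pa B}\left\{\frac{\pa u_0}{\pa\nu}(y)\,K(x,y)-u_0(y)\,\frac{\pa K(x,y)}{\pa\nu(y)}\right\}ds(y).
\]
Because $K(\cdot,y)=0$ on $\pa B$, symmetry forces $K(x,y)=0$ for $y\in\pa B$, so the first term drops; inserting $u_0=f$ on $\pa B$ then yields (\ref{211025-1}) for smooth $f$.

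To pass to $f\in H^s(\pa B)$ with $s\in[-\frac12,\frac12]$, I would fix $x\in B$ and note that $y\mapsto\pa_{\nu(y)}K(x,y)$ is smooth on $\pa B$: the singularity of $\Phi_0(x,\cdot)$ sits at the interior point $y=x$, and $k(\cdot,x)$ is analytic in $B$. Hence $\pa_\nu K(x,\cdot)\in H^{-s}(\pa B)$, and the right-hand side of (\ref{211025-1}) is the well-defined duality pairing $-\langle f,\pa_\nu K(x,\cdot)\rangle$, depending continuously on $f\in H^s(\pa B)$. Choosing $f_j\in C^\infty(\pa B)$ with $f_j\to f$ in $H^s(\pa B)$ and using the embedding $H^s(\pa B)\hookrightarrow H^{-1/2}(\pa B)$, Theorem \ref{0404-4}(ii) shows that the corresponding solutions $u_{0,j}$ converge to $u_0$ in $L^2_\Delta(B)$, hence in $L^2(B)$, while the right-hand side converges for every $x\in B$. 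Matching the two limits gives (\ref{211025-1}) in full generality; the final claim $u_0\in H^1_\Delta(B)$ for $f\in H^{1/2}(\pa B)$ is exactly Theorem \ref{0404-4}(i).

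The main obstacle I anticipate is reconciling the pointwise, everywhere-defined right-hand side with the $L^2_\Delta(B)$ limit on the left, since $L^2$ convergence only yields almost-everywhere information along a subsequence. I would resolve this by invoking interior estimates for harmonic functions: on any compact subset of $B$ the sup-norm of $u_{0,j}-u_0$ is controlled by its $L^2$-norm over a slightly larger ball, so that $u_{0,j}\to u_0$ locally uniformly in $B$. This upgrades the convergence to genuine pointwise convergence at every $x\in B$ and identifies $u_0(x)$ with the boundary integral cleanly.
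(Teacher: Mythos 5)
Your proof follows essentially the same route as the paper's: establish (\ref{211025-1}) for smooth boundary data by combining Green's representation formula with Green's second theorem applied to $u_0$ and $k(x,\cdot)$, using the symmetry from Lemma \ref{lem211025} together with $K(x,\cdot)=K(\cdot,x)=0$ on $\pa B$, and then pass to $f\in H^s(\pa B)$ by a denseness argument resting on Theorem \ref{0404-4}. If anything, your density step is spelled out more carefully than the paper's (which merely cites Theorem \ref{0404-4}, Remark \ref{0413-1} and a reference), in particular the upgrade from $L^2_\Delta(B)$ convergence to pointwise convergence of $u_{0,j}$ at every $x\in B$ via interior estimates for harmonic functions, which is exactly the point a terse ``denseness'' citation glosses over.
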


\begin{proof}
First, we assume that $f\in C^2(\ov{B})$.
For any fixed $x\in B$, we observe that $u_0$ and $k(\cdot,x)$ are harmonic in $B$ and $k(x,\cdot)=k(\cdot,x)$ (see Lemma \ref{lem211025}).
Applying Green's second theorem (see \cite[(6.2)]{Kress14}) over $B$ to $u_0$ and $k(x,\cdot)$ {\color{xxx}shows that}
\ben
0=\int_{\pa B}\left\{\frac{\pa u_0}{\pa\nu}(y)k(x,y)-\frac{\pa k(x,y)}{\pa\nu(y)}u_0(y)\right\}ds(y),\quad x\in B.
\enn
Using the Green's formula (see \cite[Theorem 6.5]{Kress14}) we have
\ben
u_0(x)=\int_{\pa B}\left\{\frac{\pa u_0}{\pa\nu}(y)\Phi_0(x,y)-\frac{\pa\Phi_0(x,y)}{\pa\nu(y)}u_0(y)\right\}ds(y),\quad x\in B.
\enn
Since $u_0=f$ on $\pa B$ and $K(x,\cdot)=K(\cdot,x)=0$ on $\pa B$, the representation (\ref{211025-1}) follows by taking the sum of the above two equalities.

Finally, in view of Theorem \ref{0404-4}, Remark \ref{0413-1} {\color{xxx}(iii)} and \cite[Theorem 10.12]{CK19}, {\color{xxx}we can obtain} the results for $f\in H^s(\pa B)$ with $s\in[-\frac12,\frac12]$ by denseness arguments.
\end{proof}

For $f\in L^2(\pa B)$ and $\varphi\in H^{-1/2}(\pa D)$, define the following integral operators
\ben
(Hf)(x):=-\int_{\pa B}\frac{\pa K(x,y)}{\pa\nu(y)}f(y)ds(y),\quad x\in\pa D,\\
(S_{\pa D}\varphi)(x):=\int_{\pa D}K(x,y)\varphi(y)ds(y),\quad x\in\pa D.
\enn
The properties of the above integral operators are given in the following theorem.
\begin{theorem}\label{t+}
(i) $H:L^2(\pa B)\ra H^{1/2}(\pa D)$ is bounded, compact and injective.

(ii) $S_{\pa D}:H^{-1/2}(\pa D)\ra H^{1/2}(\pa D)$ is bounded, self-adjoint and coercive, i.e., there exists a constant $c\!>\!0$ independent of $\varphi$ such that
\be\label{0504}
\langle S_{\pa D}\varphi,\varphi\rangle\geq c\left\|\varphi\right\|^2_{H^{-1/2}(\pa D)}\quad\text{for all}\;\varphi\in H^{-1/2}(\pa D),
\en
where $\langle\cdot,\cdot\rangle$ represents the sesquilinear duality pairing $\langle H^{1/2}(\pa D),H^{-1/2}(\pa D)\rangle$.
\end{theorem}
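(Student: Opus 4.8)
The plan is to prove the two parts of Theorem~\ref{t+} by analyzing the operators $H$ and $S_{\pa D}$ through the mapping properties of the Green's function $K(x,y)$ established in Lemmas \ref{lem211025} and \ref{t3.4}. The key structural fact I would exploit throughout is that $K(\cdot,y)$ is harmonic and analytic in $B\ba\{y\}$ and vanishes on $\pa B$, so that potentials built from $K$ automatically satisfy the homogeneous Dirichlet condition on $\pa B$.

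For part (i), I would argue as follows. Boundedness and compactness of $H:L^2(\pa B)\ra H^{1/2}(\pa D)$ follow from the smoothness of the kernel $\pa K(x,y)/\pa\nu(y)$ when $x\in\pa D$ and $y\in\pa B$ are separated by a positive distance (recall $\ov{D}$ lies in the interior of $B$). Since the kernel is analytic in $x$ away from $\pa B$, the operator maps into arbitrarily smooth functions on $\pa D$; restricting to $H^{1/2}(\pa D)$ and using the compact embedding of a higher-order Sobolev space into $H^{1/2}(\pa D)$ gives compactness. For injectivity I would argue by the unique continuation / analyticity principle: if $Hf=0$ on $\pa D$, then by Lemma \ref{t3.4} the harmonic function $u_0(x)=-\int_{\pa B}\pa_{\nu(y)}K(x,y)f(y)\,ds(y)$ vanishes on $\pa D$. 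But $u_0$ also vanishes on $\pa B$ by the boundary property of $K$; combined with $u_0=0$ on $\pa D$, I would set up a boundary value problem forcing $u_0\equiv0$ in an appropriate region, and then recover $f=u_0|_{\pa B}=0$. The delicate point is to show that $u_0$ vanishing on the interior curve $\pa D$ (together with being harmonic) forces $f=0$; this should follow from analytic continuation of $u_0$ from the region enclosed by $\pa D$ outward, using that $u_0$ is analytic in $B\ba\{$singularities$\}$ and equals $f$ on $\pa B$.

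For part (ii), boundedness of $S_{\pa D}:H^{-1/2}(\pa D)\ra H^{1/2}(\pa D)$ is the standard single-layer mapping property, here transferred from the free-space operator $S_{0,\pa D}$ since $K=\Phi_0+k$ and the perturbation $k(x,y)$ is a smooth (analytic) kernel on $\pa D\times\pa D$ contributing only a compact, bounded smoothing term. Self-adjointness is immediate from the symmetry $K(x,y)=K(y,x)$ proved in Lemma \ref{lem211025}. The coercivity estimate \eqref{0504} is the main obstacle and the heart of the proof. I would establish it by the variational/energy identity: associate to a density $\varphi\in H^{-1/2}(\pa D)$ the single-layer potential $v(x)=\int_{\pa D}K(x,y)\varphi(y)\,ds(y)$, which is harmonic in $B\ba\pa D$ and vanishes on $\pa B$; then integrate $|\nabla v|^2$ over $B\ba\pa D$ and use Green's identities together with the jump relation for the normal derivative of the single-layer potential to identify $\langle S_{\pa D}\varphi,\varphi\rangle$ with $\int_{B}|\nabla v|^2\,dx$ (or a closely related nonnegative energy), where the boundary term on $\pa B$ drops out precisely because $K(\cdot,y)=0$ there. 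The positivity of this Dirichlet energy, combined with a trace/Poincar\'e-type inequality relating $\|\varphi\|_{H^{-1/2}(\pa D)}$ to the energy norm of $v$, yields the lower bound $c\|\varphi\|^2_{H^{-1/2}(\pa D)}$.

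The hardest part will be the coercivity in part (ii): one must verify that the Green's-function single-layer operator $S_{\pa D}$ retains the coercivity of the free-space Laplace single-layer operator despite the additional Dirichlet boundary on $\pa B$. The cleanest route is to show that the difference $S_{\pa D}-S_{0,\pa D}$ is compact (its kernel $k(x,y)$ being analytic), so that $S_{\pa D}$ is a compact perturbation of a known coercive operator, and then upgrade to coercivity by checking that $S_{\pa D}$ has trivial kernel and is nonnegative via the energy identity above. Care is needed because in two dimensions the free-space single-layer operator is coercive only after excluding the logarithmic-capacity degeneracy; here the presence of the Dirichlet condition on $\pa B$ and the sign-definiteness of the Dirichlet energy should remove that subtlety, but verifying this rigorously is where the argument must be most careful.
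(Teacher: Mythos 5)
Your plan coincides with the paper's proof in its essentials---compactness of $H$ via interior analyticity of $u_0$, injectivity via the Dirichlet problem in $D$ plus analytic continuation, self-adjointness of $S_{\pa D}$ from Lemma \ref{lem211025}, and coercivity via an energy identity---but it contains one outright error and one unresolved step. The error is in part (i): the claim that ``$u_0$ also vanishes on $\pa B$ by the boundary property of $K$'' is false. The kernel of $H$ is the normal derivative $\pa K(x,y)/\pa\nu(y)$, not $K(x,y)$ itself, and by Lemma \ref{t3.4} the potential $u_0$ is exactly the solution of (\ref{5})--(\ref{6}), so $u_0=f$ on $\pa B$; your own final step ``recover $f=u_0|_{\pa B}=0$'' would be vacuous if $u_0$ vanished identically on $\pa B$. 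Fortunately the claim is not needed: the correct chain, which is the paper's and which your last sentence gestures at, is $Hf=0$ on $\pa D$ $\Rightarrow$ $u_0$ harmonic in $D$ with zero boundary data $\Rightarrow$ $u_0\equiv0$ in $D$ $\Rightarrow$ $u_0\equiv0$ in $B$ by analyticity $\Rightarrow$ $f=0$ by Lemma \ref{t3.4}. Delete the false sentence and part (i) is the paper's proof.

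The unresolved step is the coercivity endgame in part (ii). Your energy identity is precisely the paper's: with $w$ the potential \eqref{14}, the jump relations \eqref{211025-2} and Green's first theorem give $\langle S_{\pa D}\varphi,\varphi\rangle=\int_D|\nabla w|^2dx+\int_{B\ba\ov{D}}|\nabla w|^2dx$, the boundary term on $\pa B$ vanishing since $w=0$ there. But this only yields nonnegativity; the passage to the uniform lower bound \eqref{0504} is exactly where your proposal stays vague, and you correctly sense that comparing with $S_{0,\pa D}$ runs into the two-dimensional logarithmic-capacity degeneracy (the paper itself notes later that $S_{0,\pa D}$ can fail to be injective in 2D). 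The paper's resolution is to compare with $S_{i,\pa D}$ instead: $S_{\pa D}-S_{i,\pa D}$ is compact (smoother kernel) and $S_{i,\pa D}^{-1}$ is bounded (see \cite[Theorem 5.44]{KH}), so $S_{\pa D}$ is Fredholm of index zero; combined with injectivity of $S_{\pa D}$ (proved by a jump-relation argument: $S_{\pa D}\varphi=0$ forces $w=0$ on both sides of $\pa D$, hence $\varphi=\pa_\nu w_--\pa_\nu w_+=0$, a step you list but do not carry out), Riesz--Fredholm theory gives a bounded inverse $S_{\pa D}^{-1}$. Then Poincar\'e and the trace theorem upgrade the energy identity to $\langle S_{\pa D}\varphi,\varphi\rangle\geq c_2\|S_{\pa D}\varphi\|^2_{H^{1/2}(\pa D)}$, and boundedness of $S_{\pa D}^{-1}$ converts this into \eqref{0504}. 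Your abstract alternative (nonnegative self-adjoint $+$ injective $+$ compact perturbation of a coercive operator $\Rightarrow$ coercive) is sound functional analysis, but to apply it you must perturb around the invertible/coercive operator $S_{i,\pa D}$, not $S_{0,\pa D}$; with that substitution your route becomes a valid and essentially equivalent variant of the paper's argument.
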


\begin{proof}
(i). Assume $f\in L^2(\pa B)$.
In view of Lemma \ref{t3.4}, $Hf$ is the trace of $u_0$ on $\pa D$ (see \cite[Theorem 3.38]{WM}).
Since $u_0$ defined by (\ref{211025-1}) is harmonic and analytic {\color{xxx}in $B$}, we conclude that $H:L^2(\pa B)\ra H^{1/2}(\pa D)$ is bounded and compact.
Now assume $Hf=0$ on $\pa D$, then $u_0$ defined by (\ref{211025-1}) satisfies
\ben
\Delta u_0=0 && \text{in }D,\\
u_0=0 && \text{on }\pa D.
\enn
The uniqueness of the above boundary value problem implies $u_0=0$ in $D$.
Hence, $u_0=0$ in $B$ by analyticity.
Then $f=0$ on $\pa B$ follows from Lemma \ref{t3.4}.
This shows the injectivity of $H$.

(ii). Noting that $K(x,y)$ has the same type of singularity for $x=y$ as $\Phi_0(x,y)$ and $\Phi_i(x,y)$, we deduce from the jump relations and regularity properties of boundary integral operators (see \cite{CK83,CK19} for domains of class $C^2$ and \cite{Costabel,WM} for Lipschitz domains) that $S_{\pa D}:H^{-1/2}(\pa D)\ra H^{1/2}(\pa D)$ is bounded for any $\varphi\in H^{-1/2}(\pa D)$ and the function $w$ defined by
\be\label{14}
w(x):=\int_{\pa D}K(x,y)\varphi(y)ds(y),\quad x\in\R^2\ba\pa D,
\en
satisfies the following jump relations:
\be\label{211025-2}
w_\pm\!=\!S_{\pa D}\varphi,\quad\pa_\nu w_+\!-\!\pa_\nu w_-\!=\!-\varphi&& \text{on}\;\pa D,
\en
where the subindex $+\, (-)$ indicates the limit as $x$ approaches $\pa D$ from outside (inside) of $D$, respectively.

It is easy to deduce from Lemma \ref{lem211025} that $S_{\pa D}:H^{-1/2}(\pa D)\ra H^{1/2}(\pa D)$ is self-adjoint.

To show that $S_{\pa D}:H^{-1/2}(\pa D)\ra H^{1/2}(\pa D)$ is injective, we assume $S_{\pa D}\varphi\!=\!0$ on $\pa D$.
Then the function $w$ defined by (\ref{14}) satisfies
\ben
\begin{cases}
\Delta w=0 & \text{in}\;B\ba\ov{D},\\
w=0 & \text{on}\;\pa D\cup\pa B,
\end{cases}\quad\text{and}\quad\begin{cases}
\Delta w=0 & \text{in}\;D,\\
w=0 & \text{on}\;\pa D.
\end{cases}
\enn
It follows from the uniqueness of above boundary value problems that $w=0$ in $D$ and thus, by (\ref{211025-2}), we get $\varphi=\pa_\nu w_--\pa_\nu w_+=0$ on $\pa D$.

We claim that $S_{\pa D}:H^{-1/2}(\pa D)\ra H^{1/2}(\pa D)$ has a bounded inverse.
Note that the operator $S_{\pa D}-S_{i,\pa D}:H^{-1/2}(\pa D)\ra H^{1/2}(\pa D)$ is compact {\color{xxx}due to the increased smoothness of
the integral kernel as compared with that of $S_{i,\pa D}$}.
It follows that $S_{\pa D}=S_{i,\pa D}+(S_{\pa D}-S_{i,\pa D})$ is a Fredholm operator of index zero since the inverse $S_{i,\pa D}^{-1}:H^{1/2}(\pa D)\ra H^{-1/2}(\pa D)$ is bounded (see \cite[Theorem 5.44]{KH}).
We also refer the reader to \cite[Theorem 7.6]{WM} for another proof that an integral operator, whose integral kernel has the same type of singularity for $x=y$ as $K(x,y)$, is a Fredholm operator of index zero.
Recall that $S_{\pa D}$ is injective.
By the Riesz-Fredholm theory we know that the inverse $S_{\pa D}^{-1}:H^{1/2}(\pa D)\ra H^{-1/2}(\pa D)$ is bounded.

We are now ready to prove the coercivity of $S_{\pa D}$.
Let $w$ be defined by (\ref{14}).
By the jump relations (\ref{211025-2}) and using Green's first theorem (see \cite[Theorem 6.3]{Kress14}) we have
\ben
\langle S_{\pa D}\varphi,\varphi\rangle&=&\int_{\pa D}w(\pa_\nu w_--\pa_\nu w_+)ds\\
&=&\int_D\left\{w\Delta w+|\nabla w|^2\right\}dx+\int_{B\ba\ov{D}}\left\{w\Delta w+|\nabla w|^2\right\}dx-\int_{\pa B}w\pa_\nu wds\\
&=&\int_D|\nabla w|^2dx+\int_{B\ba\ov{D}}|\nabla w|^2dx,
\enn
where we have used the fact that $\Delta w=0$ in $B\ba\pa D$ and $w=0$ on $\pa B$.
By the Poincar\'e inequality (see \cite[Lemma 3.13]{Monk}) and the trace theorem (see \cite[Theorem 3.9]{Monk}), we get
\ben
\langle S_{\pa D}\varphi,\varphi\rangle\geq\left\|\nabla w\right\|_{L^2(B\ba\ov{D})}^2\geq c_1\left\|w\right\|_{H^1(B\ba\ov{D})}^2\geq c_2\left\|S_{\pa D}\varphi\right\|_{H^{1/2}(\pa D)}^2
\enn
for some constants $c_1,c_2\!>\!0$ independent of $\varphi$ and $w$.
Noting that $S_{\pa D}^{-1}:H^{1/2}(\pa D)\ra H^{-1/2}(\pa D)$ is bounded, we finally arrive at \eqref{0504}.
\end{proof}

We are now in a position to derive a factorization of $\Lambda_D-\Lambda_0$.

\begin{theorem}\label{t3.5}
The following relation between $\Lambda_D-\Lambda_0$, $G_D$ and $S_{\pa D}$ holds:
\be\label{fac}
\Lambda_D-\Lambda_0=G_DS_{\pa D}^*G_D^*,
\en
where $G_D^*:L^2(\pa B)\ra H^{-1/2}(\pa D)$ and $S_{\pa D}^*:H^{-1/2}(\pa D)\ra H^{1/2}(\pa D)$ are the adjoint operators of $G_D$ and $S_{\pa D}$, respectively.
\end{theorem}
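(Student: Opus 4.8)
The plan is to verify the factorization (\ref{fac}) by composing three identities, each obtained from a suitable application of Green's second theorem. Let $u_0$ solve (\ref{5})--(\ref{6}) and let $u$ solve (\ref{1'})--(\ref{3'}) for the same Dirichlet datum $f$ on $\pa B$, so that $(\Lambda_D-\Lambda_0)f=\pa_\nu(u-u_0)$ on $\pa B$. First I would set $v:=u-u_0$ and observe that $v$ is harmonic in $B\ba\ov{D}$, vanishes on $\pa B$, and equals $-u_0|_{\pa D}=-Hf$ on $\pa D$, where the last identity is the description of $H$ in the proof of Theorem \ref{t+}(i) together with Lemma \ref{t3.4}. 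Hence $v$ solves (\ref{7})--(\ref{9}) with datum $g=-Hf$, and the definition of $G_D$ yields
\[(\Lambda_D-\Lambda_0)f=\pa_\nu v=-G_DHf.\]
It therefore remains to prove the operator identity $H=-S_{\pa D}^*G_D^*$, after which composing with $G_D$ on the left gives (\ref{fac}).

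The second step is to compute $G_D^*$ explicitly. For $\psi\in L^2(\pa B)$ let $V$ be the unique solution of $\Delta V=0$ in $B\ba\ov{D}$, $V=\psi$ on $\pa B$, $V=0$ on $\pa D$, which is well-posed by Theorem \ref{0405-2} and Remark \ref{0413-1}. Taking $w$ to be the solution of (\ref{7})--(\ref{9}) for an arbitrary $g\in H^{1/2}(\pa D)$ and applying Green's second theorem to the pair $(w,V)$ in $B\ba\ov{D}$, the boundary contributions on $\pa B$ reduce (since $w=0$ there) and those on $\pa D$ reduce (since $V=0$ there), leaving
\[(G_Dg,\psi)_{L^2(\pa B)}=\int_{\pa B}\psi\,\pa_\nu w\,ds=-\int_{\pa D}g\,\pa_\nu V\,ds=\langle g,-\pa_\nu V|_{\pa D}\rangle.\]
Since $g$ is arbitrary, this identifies $G_D^*\psi=-\pa_\nu V|_{\pa D}$ in $H^{-1/2}(\pa D)$.

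The third step is to evaluate $S_{\pa D}^*G_D^*\psi=-S_{\pa D}(\pa_\nu V|_{\pa D})$, recalling that $S_{\pa D}$ is self-adjoint by Theorem \ref{t+}(ii). Write $u_0^\psi$ for the solution of (\ref{5})--(\ref{6}) with datum $\psi$. Fixing $x\in D$ and applying Green's second theorem to the pair $(V,K(x,\cdot))$ in $B\ba\ov{D}$---legitimate because $K(x,\cdot)$ is harmonic in $B\ba\{x\}\supset B\ba\ov{D}$ and, by the symmetry in Lemma \ref{lem211025}, vanishes on $\pa B$---the $\pa B$-terms collapse to $\int_{\pa B}\psi\,\pa_{\nu}K(x,\cdot)\,ds$ and the $\pa D$-terms to $\int_{\pa D}K(x,\cdot)\,\pa_\nu V\,ds$, giving
\[\int_{\pa D}K(x,y)\pa_\nu V(y)\,ds(y)=-\int_{\pa B}\frac{\pa K(x,y)}{\pa\nu(y)}\psi(y)\,ds(y)=u_0^\psi(x),\quad x\in D,\]
where the second equality is Lemma \ref{t3.4} applied with datum $\psi$. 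Letting $x\to\pa D$ and using that the single-layer potential with kernel $K$ is continuous across $\pa D$ (the jump relation (\ref{211025-2})), the left-hand side tends to $S_{\pa D}(\pa_\nu V|_{\pa D})$ and the right-hand side to $u_0^\psi|_{\pa D}=H\psi$. Hence $S_{\pa D}(\pa_\nu V|_{\pa D})=H\psi$, so that $-S_{\pa D}^*G_D^*\psi=H\psi$; combining with the first step yields $\Lambda_D-\Lambda_0=-G_DH=G_DS_{\pa D}^*G_D^*$.

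The main obstacle I anticipate is not the algebra of these three identities but the rigorous justification of the Green's formulas for the low-regularity datum $f\in H^{-1/2}(\pa B)$: the relevant traces and normal derivatives live in negative-order Sobolev spaces (cf. Theorem \ref{0404-3}), so the boundary integrals must be read as duality pairings and the classical Green's identities of \cite{Kress14} replaced by their generalized $H^1_\Delta$/$L^2_\Delta$ counterparts, with $\pa D$ only Lipschitz. I would handle this by first establishing all three identities for smooth data (say $f,\psi\in C^2(\ov{B})$ and $g\in C^\infty(\pa D)$), where the pointwise Green's theorems apply directly, and then passing to the limit by density, invoking the continuity and mapping properties of $G_D$, $H$, $S_{\pa D}$ and the solution operators already established in Theorems \ref{t3.3}, \ref{t+}, \ref{0405-2} and Lemma \ref{t3.4}. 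Careful bookkeeping of the orientation of $\nu$ on $\pa D$ (outward from $D$, hence inward for $B\ba\ov{D}$) is essential to obtain the correct signs throughout.
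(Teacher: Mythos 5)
Your proof is correct, and it shares the paper's skeleton: both arguments reduce (\ref{fac}) to the two identities $(\Lambda_D-\Lambda_0)f=-G_DHf$ and $H=-S_{\pa D}^*G_D^*$, and your derivation of the first one (via $u-u_0$ solving (\ref{7})--(\ref{9}) with $g=-Hf$) is exactly the paper's. The difference lies in how the second identity is established. The paper never computes $G_D^*$: it writes out the kernel of the $L^2$-adjoint $H^*$ (using the symmetry $K(x,y)=K(y,x)$ of Lemma \ref{lem211025}), recognizes $H^*\varphi$ as $\pa_\nu v|_{\pa B}$ for the single-layer potential $v=-\int_{\pa D}K(\cdot,y)\varphi(y)\,ds(y)$, observes via the jump relations that $v$ solves (\ref{7})--(\ref{9}) with $g=-S_{\pa D}\varphi$, hence $H^*=-G_DS_{\pa D}$, and then dualizes. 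You instead characterize $G_D^*$ by a Green's-identity duality argument ($G_D^*\psi=-\pa_\nu V|_{\pa D}$ for the auxiliary solution $V$ carrying data $\psi$ on $\pa B$ and $0$ on $\pa D$), and then verify $S_{\pa D}G_D^*\psi=-H\psi$ by a second Green's identity against $K(x,\cdot)$, $x\in D$, combined with the representation formula of Lemma \ref{t3.4}. Your signs and the orientation of $\nu$ on $\pa D$ check out, and your conclusion is consistent with the paper's: $S_{\pa D}(\pa_\nu V|_{\pa D})=H\psi$ is precisely what $H^*=-G_DS_{\pa D}$ predicts. As for what each route buys: the paper's kernel computation is shorter and confines all regularity questions to the already-proved mapping properties of the layer potentials, while yours yields an explicit PDE description of $G_D^*$ --- a reusable fact in factorization-method arguments --- at the price of justifying two Green's identities at low regularity. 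That price is real: for $\psi\in L^2(\pa B)$ your $V$ has only $L^2_\Delta$-regularity near $\pa B$ (Theorem \ref{0405-2}(ii)), so the $\pa B$-boundary terms must be read as $H^{3/2}$--$H^{-3/2}$ pairings via Theorem \ref{0404-3}, and the smooth-data-plus-density scheme you propose (mirroring the paper's own practice in Theorem \ref{0404-3} and Lemma \ref{t3.4}) is indeed the right way to discharge it.
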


\begin{proof}
Given $f\in L^2(\pa B)$, we have $\Lambda_Df-\Lambda_0f=\pa_\nu u-\pa_\nu u_0\in H^{-1}(\pa B)$, where $u\in L_\Delta^2(B\ba\ov{D})$ solves (\ref{1})--(\ref{3}) and $u_0\in L_\Delta^2(B)$ solves (\ref{5})--(\ref{6}), respectively.
Noting that $w=u-u_0$ solves (\ref{7})--(\ref{9}) with $g=-u_0$ on $\partial D$ and using {\color{xxx}Remark} \ref{0413-1} (i) and Lemma \ref{t3.4}, we obtain
\be\label{12}
(\Lambda_D-\Lambda_0)f=G_D(\left.-u_0\right|_{\pa D})=-G_DHf.
\en
The $L^2$ adjoint $H^*:H^{-1/2}(\pa D)\ra L^2(\pa B)$ is given by
\ben
(H^*\varphi)(x):=-\int_{\pa D}\frac{\pa K(x,y)}{\pa\nu(x)}\varphi(y)ds(y),\quad x\in\pa B.
\enn
We observe that $H^*\varphi=\pa_\nu v$ on $\pa B$, where $v$ is defined by
\ben
v(x)=-\int_{\pa D}K(x,y)\varphi(y)ds(y),\quad x\in B\ba\ov{D}.
\enn
Since $v$ solves (\ref{7})--(\ref{9}) with $g\!=\!-S_{\pa D}\varphi$,
we have $H^*\!=\!-G_DS_{\pa D}$  and consequently
\be\label{13}
H=-S_{\pa D}^*G_D^*.
\en
Now, the factorization form \eqref{fac}  follows by combining (\ref{12}) and (\ref{13}).
\end{proof}

\begin{remark}\label{0915}
(i) It follows from Remark \ref{0413-1} (iii) and (iv) that $\Lambda_D-\Lambda_0$ is bounded from $H^s(\pa B)$ to $H^{s-1}(\pa B)$ for any $s\in[-\frac12,\frac12]$.
Further, since $w=u-u_0$ solves (\ref{7})--(\ref{9}) with $g=-u_0$ on $\partial D$, we conclude from the proof of Theorem \ref{t3.3} that $\Lambda_D-\Lambda_0:H^s(\pa B)\ra H^{m-3/2}(\pa B)$ is bounded for any $s\in[-\frac12,\frac12]$ and $m\in\Z_+$.
In particular, $\Lambda_D-\Lambda_0:L^2(\pa B)\ra L^2(\pa B)$ is bounded.

(ii) Noting that $H:L^2(\pa B)\ra H^{1/2}(\pa D)$ is injective, we conclude from (\ref{13}) that $G_D^*:L^2(\pa B)\ra H^{-1/2}(\pa D)$ is also injective and thus $G_D:H^{1/2}(\pa D)\ra L^2(\pa B)$ has a dense range.
\end{remark}

{\color{xxx}Since $G_D:H^{1/2}(\pa D)\ra L^2(\pa B)$ is compact} (see Theorem \ref{t3.3}) and $S_{\pa D}:H^{-1/2}(\pa D)\ra H^{1/2}(\pa D)$ is self-adjoint (see Theorem \ref{t+}),
it follows from \eqref{fac} that $\Lambda_D-\Lambda_0:L^2(\pa B)\ra L^2(\pa B)$ is {\color{xxx}compact and} self-adjoint.
Combining Theorems \ref{t+}, \ref{t3.5} and \cite[Corollary 1.22]{Kirsch08}) we immediately obtain the following important result.

\begin{theorem}\label{thm3.8}
The ranges of $G_D$ and $(\Lambda_D\!-\!\Lambda_0)^{1/2}$ coincide, i.e., ${\rm Ran}\,G_D\!=\!{\rm Ran}\,(\Lambda_D\!-\!\Lambda_0)^{1/2}$.
\end{theorem}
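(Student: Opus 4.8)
The plan is to obtain the range identity as an immediate consequence of the symmetric factorization \eqref{fac} together with the abstract functional-analytic range lemma \cite[Corollary 1.22]{Kirsch08}; accordingly, the whole task reduces to checking that the three operators appearing in \eqref{fac} meet the hypotheses of that lemma. I would work with $L^2(\pa B)$ as the space on which $\Lambda_D-\Lambda_0$ acts, regard $G_D:H^{1/2}(\pa D)\ra L^2(\pa B)$ as the outer operator, and read $S_{\pa D}^*$ as the middle operator attached to the Gelfand triple $H^{1/2}(\pa D)\subset L^2(\pa D)\subset H^{-1/2}(\pa D)$. Most ingredients are already in place: $G_D$ is compact by Theorem \ref{t3.3} and has dense range by Remark \ref{0915}(ii), while $\Lambda_D-\Lambda_0:L^2(\pa B)\ra L^2(\pa B)$ is compact and self-adjoint as recorded just above the statement.

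The one point that genuinely needs to be pinned down is that $\Lambda_D-\Lambda_0$ is non-negative, for this is exactly what makes the square root $(\Lambda_D-\Lambda_0)^{1/2}$ meaningful. To see it I would feed the self-adjointness $S_{\pa D}^*=S_{\pa D}$ (Theorem \ref{t+}(ii)) and the coercivity \eqref{0504} into \eqref{fac}: for every $f\in L^2(\pa B)$,
\[
\big((\Lambda_D-\Lambda_0)f,\,f\big)_{L^2(\pa B)}=\langle S_{\pa D}^*G_D^*f,\,G_D^*f\rangle\geq c\,\|G_D^*f\|_{H^{-1/2}(\pa D)}^2\geq 0,
\]
where the middle pairing is the $\langle H^{1/2}(\pa D),H^{-1/2}(\pa D)\rangle$ duality and $G_D^*f\in H^{-1/2}(\pa D)$. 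Hence $\Lambda_D-\Lambda_0$ is a non-negative, compact, self-adjoint operator on $L^2(\pa B)$ and admits a unique non-negative self-adjoint square root.

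Once non-negativity is secured, all hypotheses of \cite[Corollary 1.22]{Kirsch08} hold --- a symmetric factorization with compact, dense-range outer factor $G_D$ and self-adjoint, coercive middle factor $S_{\pa D}^*$ --- and the corollary delivers ${\rm Ran}\,G_D={\rm Ran}\,(\Lambda_D-\Lambda_0)^{1/2}$ directly. I expect no genuinely new analytical difficulty here, since the substantive estimates were already carried out in Theorems \ref{t3.3}, \ref{t+} and \ref{t3.5}; the only care required is bookkeeping, namely matching the directions of the adjoints $G_D^*$ and $S_{\pa D}^*$ and the associated duality pairings to the precise conventions of \cite[Corollary 1.22]{Kirsch08}, so that the coercivity of $S_{\pa D}$ is invoked in the correct pairing between $H^{1/2}(\pa D)$ and $H^{-1/2}(\pa D)$.
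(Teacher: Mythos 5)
Your proposal is correct and follows essentially the same route as the paper: both rest on the symmetric factorization \eqref{fac} from Theorem \ref{t3.5}, the compactness of $G_D$ (Theorem \ref{t3.3}), the self-adjointness and coercivity of $S_{\pa D}$ (Theorem \ref{t+}), and an appeal to \cite[Corollary 1.22]{Kirsch08}. The only difference is that you spell out the non-negativity of $\Lambda_D-\Lambda_0$ via $\bigl((\Lambda_D-\Lambda_0)f,f\bigr)=\langle S_{\pa D}G_D^*f,G_D^*f\rangle\geq c\|G_D^*f\|_{H^{-1/2}(\pa D)}^2$, a step the paper leaves implicit in its one-line invocation of the corollary; this is a useful clarification, not a deviation.
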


For a numerical implementation of factorization method, we need the following theorem.

\begin{theorem}\label{FMDtNtest}
Let $z\in B$. Then $\pa_\nu K(\cdot,z)|_{\pa B}\in{\rm Ran}\,G_D$ if and only if $z\in D$.
\end{theorem}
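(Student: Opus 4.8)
The plan is to argue directly from the definition of $G_D$ together with the two structural properties of the Green's function established above: $K(\cdot,z)$ is harmonic and analytic in $B\ba\{z\}$ (see Remark \ref{0413-1} (i) and the discussion preceding Lemma \ref{lem211025}), it vanishes on $\pa B$, and it carries a logarithmic singularity at $z$. Recall that $\phi\in{\rm Ran}\,G_D$ means precisely that there exists $g\in H^{1/2}(\pa D)$ for which the unique solution $w$ of (\ref{7})--(\ref{9}) satisfies $\pa_\nu w|_{\pa B}=\phi$. Throughout I note that, since $z\in B$ keeps the singularity away from $\pa B$, the field $\pa_\nu K(\cdot,z)|_{\pa B}$ is smooth and in particular lies in $L^2(\pa B)$, so the membership question is well-posed.

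For the sufficiency direction, I would take the Green's function itself as the test field. When $z$ lies in the interior $D$, the function $K(\cdot,z)$ is harmonic in $B\ba\ov{D}$ and smooth up to $\pa D$, so $g:=K(\cdot,z)|_{\pa D}\in H^{1/2}(\pa D)$ is an admissible density. Because $K(\cdot,z)=0$ on $\pa B$, the restriction $w:=K(\cdot,z)|_{B\ba\ov{D}}$ solves (\ref{7})--(\ref{9}) with this $g$; by the uniqueness part of Theorem \ref{0405-2} it is \emph{the} solution, whence $G_Dg=\pa_\nu w|_{\pa B}=\pa_\nu K(\cdot,z)|_{\pa B}$, giving the claimed range membership.

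For the necessity direction I would argue by contradiction. Suppose $z\notin D$ and $\pa_\nu K(\cdot,z)|_{\pa B}=G_Dg$ for some $g$, with $w$ the corresponding $H^1$-solution of (\ref{7})--(\ref{9}); set $v:=w-K(\cdot,z)$. On $\pa B$ both $w$ and $K(\cdot,z)$ vanish, and by hypothesis their normal derivatives coincide, so $v$ carries zero Cauchy data on $\pa B$. Since $z$ is bounded away from $\pa B$, $v$ is harmonic and real-analytic on $B\ba\ov{D}$ (with $z$ deleted in the subcase $z\in B\ba\ov{D}$), and Holmgren's uniqueness theorem forces $v$ to vanish in a one-sided neighborhood of $\pa B$. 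The relevant set is connected (removing a single point from the planar region $B\ba\ov{D}$ cannot disconnect it), so analytic continuation propagates $v\equiv0$ throughout, i.e. $w=K(\cdot,z)$. This is impossible: $w\in H^1(B\ba\ov{D})$, whereas the logarithmic kernel $K(\cdot,z)$ is not locally $H^1$ near $z$, so the two functions cannot agree near $z$.

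The step demanding the most care is the necessity argument in the borderline case $z\in\pa D$, where the singularity sits exactly on the (possibly nonsmooth, cornered) boundary of the polygon. The advantage of phrasing the contradiction through the $H^1$ defect of $\log|\cdot-z|$, rather than through boundedness of $w$ at an interior point, is that it handles $z\in B\ba\ov{D}$ and $z\in\pa D$ uniformly: in both situations one only needs that $\nabla\log|x-z|$ fails to be square-integrable over a one-sided sector approaching $z$, which contradicts $w\in H^1(B\ba\ov{D})$. I would therefore run the entire necessity argument on $B\ba\ov{D}$ and close both subcases with this single non-$H^1$ blow-up estimate.
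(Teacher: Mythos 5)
Your proof is correct and takes essentially the same route as the paper: for sufficiency both arguments take $g=K(\cdot,z)|_{\pa D}$ so that $K(\cdot,z)$ itself is the unique solution of (\ref{7})--(\ref{9}), and for necessity both identify $w$ with $K(\cdot,z)$ in $B\ba\ov{D}$ via Holmgren's theorem and reach a contradiction from the singularity at $z$. Your explicit sector-based non-$H^1$ estimate for the borderline case $z\in\pa D$ simply spells out what the paper compresses into its final sentence, ``Due to the singularity of $K(\cdot,z)$ at $z\in B$, we know $K(\cdot,z)=w\in H^1(B\ba\ov{D})$ if and only if $z\in D$.''
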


\begin{proof}
{\color{xxx}If $z\in D$, then $K(\cdot,z)$ is harmonic in $B\ba\{z\}$ and $K(\cdot,z)|_{\pa D}\in H^{1/2}(\pa D)$.
Therefore, it follows from $K(\cdot,z)=0$ on $\pa B$ that $\pa_\nu K(\cdot,z)|_{\pa B}=G_D(K(\cdot,z)|_{\pa D})\in{\rm Ran}\,G_D$.}

If $\pa_\nu K(\cdot,z)\in{\rm Ran}\,G_D$, then there exists $g\in H^{1/2}(\pa D)$ such that {\color{xxx}$\pa_\nu K(\cdot,z)|_{\pa B}=G_Dg=\pa_\nu w$ on $\pa B$, where $w\in H^1(B\ba\ov{D})$ solves (\ref{7})--(\ref{9}).
Noting that $K(\cdot,z)=0=w$ on $\pa B$, we conclude from Holmgren's theorem (see \cite[Theorem 6.7]{Kress14}) that $K(\cdot,z)=w$ in $B\ba\ov{D}$.}
Due to the singularity of $K(\cdot,z)$ at $z\in B$, we know $K(\cdot,z)=w\in H^1(B\ba\ov{D})$ if and only if $z\in D$.
\end{proof}

Combining Remark \ref{0915} (ii), Theorems \ref{thm3.8} and \ref{FMDtNtest}, and Picard's theorem (\cite[Theorem 4.8]{CK19}), we immediately obtain the following result.

\begin{theorem}
{\color{xxx}Denote by $(\lambda_n,\varphi_n)$ an eigensystem of $\Lambda_D-\Lambda_0$.
Define
\be\label{0409-1}
I(z):=\left[\sum_n\frac{|(\pa_\nu K(\cdot,z),\varphi_n)|^2}{|\lambda_n|}\right]^{-1},\quad z\in B,
\en
where $(\cdot,\cdot)$ denotes the inner product in $L^2(\pa B)$.
Then $z\in D$ if and only if $I(z)>0$.}
\end{theorem}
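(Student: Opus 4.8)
The plan is to derive the characterization of $D$ by combining the abstract range identity from Theorem \ref{thm3.8}, the geometric test from Theorem \ref{FMDtNtest}, and Picard's theorem applied to the self-adjoint compact operator $\Lambda_D-\Lambda_0$. First I would recall that, as established in the discussion preceding Theorem \ref{thm3.8}, the operator $A:=\Lambda_D-\Lambda_0:L^2(\pa B)\ra L^2(\pa B)$ is compact and self-adjoint. Hence it admits a spectral decomposition with a countable eigensystem $(\lambda_n,\varphi_n)$, where $\{\varphi_n\}$ forms an orthonormal basis of $\overline{{\rm Ran}\,A}$ and $\lambda_n\ra0$. The self-adjointness guarantees that the $\varphi_n$ can be chosen orthonormal and that the $\lambda_n$ are real, so that $|\lambda_n|$ and $(|\lambda_n|,|\varphi_n|)$ furnish a valid singular system for $A$ in the sense required by Picard's theorem.

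Next I would translate the membership $\pa_\nu K(\cdot,z)|_{\pa B}\in{\rm Ran}\,G_D$ into a Picard-type series condition. By Theorem \ref{thm3.8}, ${\rm Ran}\,G_D={\rm Ran}\,A^{1/2}$, so it suffices to characterize when a given $\psi:=\pa_\nu K(\cdot,z)|_{\pa B}$ lies in ${\rm Ran}\,A^{1/2}$. Since $A$ is self-adjoint and compact, its square root $A^{1/2}$ (defined via $|\lambda_n|^{1/2}$ on the eigenbasis) is again compact with singular values $|\lambda_n|^{1/2}$ and the same eigenfunctions $\varphi_n$. Picard's theorem (\cite[Theorem 4.8]{CK19}) then states that $\psi\in{\rm Ran}\,A^{1/2}$ if and only if $\psi\in\overline{{\rm Ran}\,A^{1/2}}$ and
\ben
\sum_n\frac{|(\psi,\varphi_n)|^2}{|\lambda_n|}<\infty.
\enn
The closure condition is automatic here because, by Remark \ref{0915} (ii), $G_D$ has dense range, so $\overline{{\rm Ran}\,A^{1/2}}=\overline{{\rm Ran}\,G_D}=L^2(\pa B)$ and every $\psi\in L^2(\pa B)$ satisfies it. Thus convergence of the series $\sum_n|(\psi,\varphi_n)|^2/|\lambda_n|$ is equivalent to $\psi\in{\rm Ran}\,G_D$.

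Finally I would assemble the equivalences. The quantity inside the bracket in \eqref{0409-1} is precisely this Picard series with $\psi=\pa_\nu K(\cdot,z)|_{\pa B}$, so $I(z)$ is defined (finite series, reciprocal positive) exactly when the series converges, i.e. when $\pa_\nu K(\cdot,z)|_{\pa B}\in{\rm Ran}\,G_D$; when the series diverges one sets $I(z)=0$ by convention. By Theorem \ref{FMDtNtest}, this range membership holds if and only if $z\in D$. Chaining the three equivalences yields $I(z)>0\iff z\in D$, which is the claim.

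The main obstacle I anticipate is the careful bookkeeping between the square-root operator $A^{1/2}=(\Lambda_D-\Lambda_0)^{1/2}$ and the eigensystem of $A$ itself: Picard's theorem is usually stated for a general compact operator in terms of its own singular system, whereas here the natural data are the eigenvalues $\lambda_n$ of $A$, so one must verify that the singular values of $A^{1/2}$ are $|\lambda_n|^{1/2}$ with the same $\varphi_n$, and that the weight $1/|\lambda_n|$ (rather than $1/|\lambda_n|^{1/2}$) correctly arises from squaring. A secondary subtlety is confirming that $\overline{{\rm Ran}\,G_D}$ is all of $L^2(\pa B)$ so the closure hypothesis in Picard's theorem is vacuous — this is exactly the content of Remark \ref{0915} (ii), but it should be invoked explicitly.
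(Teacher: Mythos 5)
Your proposal is correct and follows exactly the paper's intended argument: the paper proves this theorem in one line by combining Remark \ref{0915} (ii), Theorems \ref{thm3.8} and \ref{FMDtNtest}, and Picard's theorem, which is precisely the chain of equivalences you spell out. Your added bookkeeping --- that the singular values of $(\Lambda_D-\Lambda_0)^{1/2}$ are $|\lambda_n|^{1/2}$ with the same eigenfunctions, so squaring yields the weight $1/|\lambda_n|$, and that the dense range of $G_D$ makes the closure hypothesis in Picard's theorem vacuous --- is exactly the detail the paper leaves implicit.
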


We remark that, the knowledge of the operators $\Lambda_D$ and $\Lambda_0$ is equivalent with the Cauchy data $(f,\pa_\nu u|_{\pa B})$ and $(f,\pa_\nu u_0|_{\pa B})$ {\color{hgh} for sufficiently many} input data $f$, where $u$ solves (\ref{1'})--(\ref{3'}) and $u_0$ solves (\ref{5})--(\ref{6}). {\color{hgh}The measurement} data set can be used to reconstruct the shape of a general obstacle $D$ (e.g., \cite[Chapter 6]{Kirsch08} and Example \ref{FMDtN} below).
In the next section, we will show that, when $D$ is a convex polygon, both $\partial D$ and $\gamma$ can be reconstructed from a single pair of Cauchy data.

\section{Uniqueness with a single Cauchy data}\label{s3}
\setcounter{equation}{0}

This section is devoted to the uniqueness results and numerical methods of the inverse problem of determining the constant conductivity coefficient $\gamma$ in (\ref{1}) and the boundary $\partial D$ from a single pair of Cauchy data to (\ref{1})--(\ref{3}).

\subsection{Reconstruct the constant conductivity coefficient.}

Let $\Omega$ be a Lipschitz bounded domain that contained in the interior of $B$.
Denote by $G_{\Omega}$ the operator $G_D$ with $D$ replaced by $\Omega$ and by ${\rm Ran}\,G_{\Omega}$ the range of $G_{\Omega}$.

\begin{theorem}\label{thm4.1}
Let $u$ solve (\ref{1})--(\ref{3}) and $u_0$ solve (\ref{5})--(\ref{6}) with the same boundary value $f$ on $\pa B$.
Assume that $D\subset\Omega\subset\ov{\Om}\subset B$.
For $\tau>0$ define $g_\tau:=\left.(\gamma\pa_\nu u-\tau\pa_\nu u_0)\right|_{\pa B}$.
Then the following statements are true:

(i) If $\tau=\gamma$, then $g_\tau\in{\rm Ran}\,G_{\Omega}$;

(ii) If $\tau\neq\gamma$, then $g_\tau\in{\rm Ran}\,G_{\Omega}$ if and only if $\Lambda_0f\in{\rm Ran}\,G_\Om$.
\end{theorem}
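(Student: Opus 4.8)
The plan is to reduce everything to one observation: since $\ga$ is a positive constant, (\ref{1}) becomes $\Delta u=0$, so $u$ and $u_0$ are both harmonic, and the difference $w:=u-u_0$ is harmonic in $B\ba\ov{D}$ and vanishes on $\pa B$. Once this is in hand, the normal trace $\pa_\nu w|_{\pa B}$ will be shown to lie in ${\rm Ran}\,G_\Om$, and both statements follow by linearity.

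First I would prove (i). Set $w:=u-u_0$; then $\Delta w=0$ in $B\ba\ov{D}$ and $w=f-f=0$ on $\pa B$. Because $D\subset\Om$, the annulus $B\ba\ov{\Om}$ is contained in $B\ba\ov{D}$, so $w$ is harmonic in $B\ba\ov{\Om}$; moreover $w$ is analytic in $B\ba\ov{D}$ by Remark \ref{0413-1} (i), and since $\pa\Om$ lies in this analyticity region the trace $w|_{\pa\Om}$ is a bona fide element of $H^{1/2}(\pa\Om)$. Thus $w$, restricted to $B\ba\ov{\Om}$, solves (\ref{7})--(\ref{9}) with $D$ replaced by $\Om$ and $g:=w|_{\pa\Om}$. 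By the definition of $G_\Om$ together with uniqueness of that boundary value problem, $\pa_\nu w|_{\pa B}=G_\Om(w|_{\pa\Om})\in{\rm Ran}\,G_\Om$. When $\tau=\ga$ we have $g_\ga=\ga(\pa_\nu u-\pa_\nu u_0)|_{\pa B}=\ga\,\pa_\nu w|_{\pa B}=G_\Om(\ga\,w|_{\pa\Om})$, which lies in ${\rm Ran}\,G_\Om$ because the range is a linear subspace.

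For (ii) I would exploit this linearity directly. Writing $g_\tau=(\ga\pa_\nu u-\ga\pa_\nu u_0)|_{\pa B}+(\ga-\tau)\pa_\nu u_0|_{\pa B}=g_\ga+(\ga-\tau)\Lambda_0 f$, where $\Lambda_0 f=\pa_\nu u_0|_{\pa B}$ by Remark \ref{0413-1} (iii), I note that $g_\ga\in{\rm Ran}\,G_\Om$ by part (i). Hence $g_\tau\in{\rm Ran}\,G_\Om$ if and only if $(\ga-\tau)\Lambda_0 f=g_\tau-g_\ga\in{\rm Ran}\,G_\Om$. Since $\tau\neq\ga$ the scalar $\ga-\tau$ is nonzero, and because ${\rm Ran}\,G_\Om$ is a subspace this is equivalent to $\Lambda_0 f\in{\rm Ran}\,G_\Om$, which is exactly the claim.

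The argument is almost entirely bookkeeping; the only genuine content is the reduction carried out in (i). The point to be careful about is that $w=u-u_0$, although originally defined and harmonic only on $B\ba\ov{D}$, can legitimately be fed into $G_\Om$: this requires $\pa\Om$ to sit strictly inside the harmonicity region (i.e.\ $\ov{D}\subset\Om$, so that $w|_{\pa\Om}$ is a genuine $H^{1/2}(\pa\Om)$ datum) and an appeal to uniqueness of (\ref{7})--(\ref{9}) to identify the normal trace as $G_\Om(w|_{\pa\Om})$. Everything else then follows from linearity of the range and the decomposition $g_\tau=g_\ga+(\ga-\tau)\Lambda_0 f$.
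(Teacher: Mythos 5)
Your proposal is correct and takes essentially the same route as the paper's proof: part (i) rests on the observation that $u-u_0$ is harmonic in $B\ba\ov{D}$ with zero trace on $\pa B$, so that $g_\ga=G_\Om\bigl[\ga(u-u_0)|_{\pa\Om}\bigr]$, and part (ii) follows from the decomposition $g_\tau-g_\ga=(\ga-\tau)\Lambda_0 f$ together with the fact that ${\rm Ran}\,G_\Om$ is a linear subspace. The additional details you supply --- analyticity of $u-u_0$ near $\pa\Om$ to justify that its trace is a genuine $H^{1/2}(\pa\Om)$ datum, and uniqueness of the Dirichlet problem to identify $\pa_\nu(u-u_0)|_{\pa B}$ with $G_\Om\bigl[(u-u_0)|_{\pa\Om}\bigr]$ --- merely make explicit steps the paper leaves implicit.
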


\begin{proof}
(i). If $\tau=\gamma$, then $g_\tau=g_\gamma=\gamma\left.\pa_\nu(u-u_0)\right|_{\pa B}$.
Noting that $u-u_0$ is harmonic in $B\ba\ov{D}$ and $u-u_0=f-f=0$ on $\pa B$, we have $g_\tau=G_{\Omega}\left[\left.\gamma(u-u_0)\right|_{\pa \Omega}\right]$.

(ii). Since $g_\gamma\in{\rm Ran}\,G_{\Omega}$, the statement follows easily from
\ben
g_\tau-g_\ga=(\gamma\pa_\nu u-\tau\pa_\nu u_0)-(\gamma\pa_\nu u-\gamma\pa_\nu u_0)=(\gamma-\tau)\pa_\nu u_0=(\gamma-\tau)\Lambda_0f\quad\text{on }\pa B.
\enn
\end{proof}
According to Theorem \ref{thm4.1}, the constant conductivity coefficient $\gamma$ can be uniquely determined by a single pair of Cauchy data $(f,\pa_\nu u|_{\pa B})$ to \eqref{1}--\eqref{3} provided $\Lambda_0f\notin{\rm Ran}\,G_\Om$.
Moreover, based on Theorem \ref{thm4.1} we can propose a numerical approach for recovering $\gamma$ by taking $\tau>0$ as a testing parameter.
Our method for recovering $\gamma$ consists of the following three steps:

Step 1: Find a Lipschitz domain $\Om$ such that $D\subset\Omega\subset\ov{\Om}\subset B$ and calculate ${\rm Ran}\,G_\Om$;

Step 2: Find a boundary value $f$ on $\pa B$ such that $\Lambda_0f\notin{\rm Ran}\,G_\Om$;

Step 3: Test the values of $\tau>0$ till {\color{xxx}$g_\tau\in{\rm Ran}\,G_\Om$}.

\begin{remark}\label{rem1029-1}
Instead of by calculating $G_\Om$ directly, we obtain ${\rm Ran}\,G_{\Omega}$ indirectly by calculating $\Lambda_0$ and $\Lambda_\Omega$ {\color{xxx}(i.e., $\Lambda_D$ with $D$ replaced by $\Omega$)}.
{\color{xxx}More precisely, we have ${\rm Ran}\,G_\Om={\rm Ran}\,(\Lambda_\Om-\Lambda_0)^{1/2}$} (see Theorem \ref{thm3.8}).
Noting that $\Lambda_\Om$, $\Lambda_0$ and $g_\tau$ are equivalent to corresponding Cauchy data, we are able to recover the coefficient $\gamma$ in a data-to-data manner.
\end{remark}
{\color{hgh}For the purpose of recovering $\gamma$ in Step 3, we employ} the indicator function
\be\label{I1}
I_1(\tau):=\left[\sum_n\frac{|(g_\tau,f_n)|^2}{|\lambda_n|}\right]^{-1},
\en
where $(\cdot,\cdot)$ denotes {\color{xxx}the inner product in $L^2(\pa B)$} and $(\lambda_n,f_n)$ is an eigensystem of $(\Lambda_{\Omega}-\Lambda_0)$.
In view of Theorem \ref{thm4.1} and Remark \ref{rem1029-1}, we conclude from Picard's theorem (see \cite[Theorem 4.8]{CK19}) and Remark \ref{0915} (ii) that
\ben
I_1(\tau)\begin{cases}
>0 & \text{if }\tau=\gamma,\\
=0 & \text{if }\tau\neq\gamma.
\end{cases}
\enn

For convenience of numerical implementation, we provide several explicit examples of the boundary value $f$ on $\pa B$ such that $\Lambda_0f\notin{\rm Ran}\,G_\Om$, as shown {\color{hgh} in} the following theorem.
\begin{theorem}\label{thm240527}
Let $u$ solve (\ref{1})--(\ref{3}) and $u_0$ solve (\ref{5})--(\ref{6}) with the same boundary value $f$ on $\pa B$.
Assume that $D\subset\Omega\subset\ov{\Om}\subset B$.
For $\tau>0$, define $g_\tau:=\left.(\gamma\pa_\nu u-\tau\pa_\nu u_0)\right|_{\pa B}$.
Assume further that $\tau\neq\gamma$.
Then $\Lambda_0f\notin{\rm Ran}\,G_\Om$ provided one of the following conditions holds:

(i) $f$ is not identically zero but vanishes in an open subset $\Gamma$ of $\pa B$;

(ii) $f=\tilde f+c$ with $c$ being an arbitrary constant and $\tilde f$ satisfying condition (i);

(iii) $f\in H^s(\pa B)\ba H^{s+\epsilon}(\pa B)$ for any $s\geq-\frac12$ and $\epsilon>0$.
\end{theorem}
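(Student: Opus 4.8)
The plan is to argue by contradiction in each case: assume $\Lambda_0 f\in{\rm Ran}\,G_\Om$ and derive a contradiction with the stated property of $f$. Note first that the conclusion $\Lambda_0 f\notin{\rm Ran}\,G_\Om$ does not actually involve $\tau$ or $g_\tau$; these only serve to connect the statement with the recovery criterion of Theorem \ref{thm4.1}, so the extra hypothesis $\tau\neq\gamma$ plays no role in the argument itself. I would begin by unpacking the range membership: if $\Lambda_0 f=G_\Om g$ for some $g\in H^{1/2}(\pa\Om)$, then there is a function $w$, harmonic in $B\ba\ov\Om$, with $w=0$ and $\pa_\nu w=\Lambda_0 f=\pa_\nu u_0$ on $\pa B$. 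Setting $v:=u_0-w$ in $B\ba\ov\Om$, the function $v$ is harmonic in the connected region $B\ba\ov\Om$ and carries the Cauchy data $v|_{\pa B}=f$ and $\pa_\nu v|_{\pa B}=0$. This single reduction drives all three cases.

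For case (i), on the open arc $\Gamma\subset\pa B$ we have $v|_\Gamma=f|_\Gamma=0$ and $\pa_\nu v|_\Gamma=0$, i.e. $v$ has vanishing Cauchy data on $\Gamma$. Since the Laplacian has analytic coefficients, Holmgren's uniqueness theorem (equivalently, extension of $v$ by zero across $\Gamma$ followed by unique continuation through the connected set $B\ba\ov\Om$) forces $v\equiv0$ in $B\ba\ov\Om$. Hence $u_0=w$ there, and restricting to $\pa B$ gives $f=u_0|_{\pa B}=w|_{\pa B}=0$, contradicting $f\not\equiv0$. Case (ii) reduces immediately to (i): writing $f=\tilde f+c$, the constant $c$ is harmonic with zero normal derivative on $\pa B$, so $\Lambda_0 f=\Lambda_0\tilde f$ and therefore $\Lambda_0 f\in{\rm Ran}\,G_\Om$ if and only if $\Lambda_0\tilde f\in{\rm Ran}\,G_\Om$; applying the case (i) argument to $\tilde f$ then yields the contradiction.

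For case (iii) I would exploit boundary regularity rather than unique continuation. Because $w$ is harmonic in $B\ba\ov\Om$ with $\pa B$ a real-analytic curve lying at positive distance from $\ov\Om$ and with the trivial Dirichlet datum $w|_{\pa B}=0$, $w$ is real-analytic up to $\pa B$; consequently $\Lambda_0 f=\pa_\nu w|_{\pa B}$ is real-analytic on $\pa B$. Equivalently, $v=u_0-w$ has vanishing Neumann trace on the analytic circle $\pa B$, so by Schwarz reflection across $\pa B$ it extends to a harmonic function in a two-sided neighborhood of $\pa B$; its trace $v|_{\pa B}=f$ is then real-analytic, whence $f\in C^\omega(\pa B)\subset H^{s+\eps}(\pa B)$ for every $\eps>0$, contradicting $f\in H^s(\pa B)\ba H^{s+\eps}(\pa B)$. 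Alternatively one may run this directly on $u_0$: the harmonic function $u_0$ with the now-smooth Neumann datum $\Lambda_0 f$ on the smooth boundary $\pa B$ is $C^\infty$ up to $\pa B$ by elliptic regularity, so $f=u_0|_{\pa B}\in C^\infty(\pa B)$, again contradicting (iii).

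I expect the main obstacle to be the low-regularity bookkeeping when $f\in H^{-1/2}(\pa B)$, where $u_0$ only belongs to $L^2_\Delta(B)$ and the Cauchy traces of $v$ on $\Gamma$ live in negative-order Sobolev spaces; the trace theorem (Theorem \ref{0404-3}) must be invoked to give meaning to $v|_\Gamma$ and $\pa_\nu v|_\Gamma$ and to justify the extension-by-zero/Holmgren step in this weak setting. The other delicate point, in case (iii), is cleanly justifying the harmonic reflection across $\pa B$ (or, equivalently, the up-to-the-boundary analytic regularity of the Neumann problem with analytic data); interior analyticity of $v$ is automatic, so only the behaviour precisely at $\pa B$ requires care, and the direct $u_0$-regularity route is the safer way to handle the weak case.
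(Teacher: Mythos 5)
Your proposal is correct, and it diverges from the paper's proof in instructive ways. For (i) the paper does not work with $u_0-w$ directly: it first invokes Theorem \ref{thm4.1} (ii) (this is where $\tau\neq\gamma$ enters) to convert the assumption $\Lambda_0f\in{\rm Ran}\,G_\Om$ into $g_\tau\in{\rm Ran}\,G_{\Om}$, then applies Holmgren's theorem to $\gamma u-\tau u_0$ and $w$, reaching a contradiction because $w=0$ on $\pa B$ while $\gamma u-\tau u_0=(\gamma-\tau)f$ is not identically zero there; your version, applying Holmgren to $v=u_0-w$, is a genuine streamlining that never touches $u$, $g_\tau$, or the hypothesis $\tau\neq\gamma$, and your observation that this hypothesis is logically superfluous for the stated conclusion is accurate. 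Case (ii) coincides with the paper. Case (iii) is where the routes really diverge: the paper never mentions reflection or Neumann regularity; it uses that $B$ is a disk to diagonalize $\Lambda_0$ in Fourier series, shows by explicit coefficient estimates that $f\notin H^{s+\eps}(\pa B)$ forces $\Lambda_0f\notin H^{\sigma-1}(\pa B)$ for every $\sigma\geq s+\eps$, and contrasts this with the fact ${\rm Ran}\,G_\Om\subset H^{m-3/2}(\pa B)$ for all $m\in\Z_+$, which comes from the iterated interior-regularity argument in the proof of Theorem \ref{t3.3} (the paper's substitute for your reflection step). Your approach buys generality and a stronger conclusion (any analytic boundary, and range membership forces $f$ itself to be real-analytic rather than merely too smooth on a Sobolev scale), while the paper's computation buys robustness in the weak regime: for $f\in H^{-1/2}(\pa B)$ everything reduces to explicit series estimates, with no need to justify Schwarz reflection or elliptic regularity for $L^2_\Delta$ solutions. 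On that last point, be aware that your ``safer'' alternative route via $u_0$ is not free either: concluding $u_0\in C^\infty(\ov{B})$ from smoothness of $\pa_\nu u_0$ requires uniqueness (modulo constants) of the Neumann problem in the weak class $L^2_\Delta(B)$, which is precisely the kind of functional-analytic bookkeeping the paper's Fourier computation sidesteps.
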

\begin{proof}
(i). {\color{xxx}Assume to the contrary that $\Lambda_0f\in{\rm Ran}\,G_\Om$.}
Then by Theorem \ref{thm4.1} (ii) it follows that $g_\tau\in{\rm Ran}\,G_{\Omega}$, i.e., there exists $g\in H^{1/2}(\pa \Omega)$ such that $g_\tau=G_{\Omega}g$.
By the definition of $G_\Omega$ the boundary value problem
\ben
\Delta w=0 & \text{in}\;B\ba\ov{\Omega},\\
w=0 & \text{on}\;\pa B,\\
w=g & \text{on}\;\pa \Omega,
\enn
admits a unique solution $w$ such that $\pa_\nu w=g_\tau$ on $\pa B$.
From (\ref{1})--(\ref{3}) and (\ref{5})--(\ref{6}) we obtain
\ben
\Delta(\gamma u-\tau u_0)=0 & \text{in}\;B\ba\ov{\Omega},\\
(\gamma u-\tau u_0)=(\gamma-\tau)f & \text{on}\;\pa B.
\enn
Since {\color{xxx}$\pa_\nu(\gamma u-\tau u_0)=g_\tau$ on $\pa B$ and} $f$ vanishes on $\Gamma$, the Cauchy data of $(\gamma u-\tau u_0)$ and $w$ coincide on $\G$.
Now it follows from Holmgren's theorem (see \cite[Theorem 6.7]{Kress14}) that $w=\gamma u-\tau u_0$ in $B\ba\ov{\Omega}$. Consequently,
$w$ and $\gamma u-\tau u_0$ must coincide on $\partial B$ by trace theorem.
However, this is impossible, because $w=0$ on $\pa B$ and $(\gamma u-\tau u_0)=(\gamma-\tau)f$ is not identically zero on $\pa B$.

(ii). It suffices to show $\Lambda_0(f-c)\notin{\rm Ran}\,G_\Om$ since constant functions belong to the nullspace of $\Lambda_0$.
Now the proof is completed by statement (i).

(iii). Without loss of generality, we assume that $B$ is a disk with radius $R$ and center at the origin.
From the following Fourier series expansion
\ben
f(x)=f(R(\cos\theta,\sin\theta))=\sum_{n\in\Z}c_ne^{in\theta},\quad x\in\pa B,
\enn
we conclude that
\be\label{0419-1}
\|f\|_{H^s(\pa B)}^2:=\sum_{n\in\Z}(1+n^2)^s|c_n|^2<\infty,
\en
but the series
\be\label{0419-2}
\sum_{n\in\Z}(1+n^2)^{s+\epsilon}|c_n|^2
\en
diverges.
Analogously to \cite[Theorem 2.22]{KH}, {\color{xxx} it can be shown that the solution to} \eqref{5}--\eqref{6} is given by the series
\ben
u_0(x)=\sum_{n\in\Z}c_n\frac{|x|^{|n|}}{R^{|n|}}e^{in\theta},\quad x\in B.
\enn
Therefore,
\ben
(\Lambda_0f)(x)=\pa_\nu u_0(x)=\sum_{n\in\Z\ba\{0\}}|n|c_n\frac{|x|^{|n|-1}}{R^{|n|}}e^{in\theta},\quad x\in\pa B.
\enn

{\color{xxx}If $\sigma\leq s$, then $\Lambda_0f\in H^{\sigma-1}(\pa B)$ since from} \eqref{0419-1} we deduce
\ben
\|\Lambda_0f\|_{H^{\sigma-1}(\pa B)}^2=\sum_{n\in\Z\ba\{0\}}(1+n^2)^{\sigma-1}\frac{n^2|c_n|^2}{R^2}\leq\frac1{R^2}\sum_{n\in\Z\ba\{0\}}(1+n^2)^{\sigma}|c_n|^2\leq\frac1{R^2}\|f\|_{H^s(\pa B)}^2.
\enn

{\color{xxx}If $\sigma\geq s+\epsilon$, then $\Lambda_0f\notin H^{\sigma-1}(\pa B)$ since from} \eqref{0419-1} and \eqref{0419-2} we know the series
\ben
\sum_{n\in\Z\ba\{0\}}\!\!\!\!(1+n^2)^{\sigma-1}\frac{n^2|c_n|^2}{R^2}\!\!\geq\!\!\frac1{2R^2}\!\!\!\!\sum_{n\in\Z\ba\{0\}}\!\!\!\!(1+n^2)^\sigma|c_n|^2\geq\frac1{2R^2}\sum_{n\in\Z}(1+n^2)^{s+\epsilon}|c_n|^2-\frac{\|f\|_{H^s(\pa B)}^2}{2R^2}
\enn
diverges.
{\color{xxx}However,} it follows from the proof of Theorem \ref{t3.3} that ${\rm Ran}\,G_\Om\subset H^{m-3/2}(\pa B)$ for any $m\in\Z_+$.
Consequently, $\Lambda_0f\notin{\rm Ran}\,G_\Om$.
\end{proof}
\begin{remark}
{\color{xxx}Using the notations introduced in the above proof, an explicit example of condition (iii) in Theorem \ref{thm240527} is as follows.}
For any $\theta_0\in(0,2\pi)$ {\color{xxx}set}
\ben
f(R(\cos\theta,\sin\theta))=\begin{cases}
1, & \text{if }\theta\in[0,\theta_0],\\
0, & \text{if }\theta\in(\theta_0,2\pi).
\end{cases}
\enn
Then
\ben
c_n=\left(f,\frac{e^{in\theta}}{{\color{xxx}2\pi R}}\right)_{L^2(\pa B)}=\frac1{{\color{xxx}2\pi R}}\int_0^{\theta_0}e^{-in\theta}d\theta=\begin{cases}
\frac{\theta_0}{{\color{xxx}2\pi R}}, & n=0,\\
\frac{1-e^{-in\theta_0}}{{\color{xxx}2\pi R} in}, & n\neq0.
\end{cases}
\enn
It can be easily seen that $f\in H^s(\pa B)\ba H^{1/2}(\pa B)$ provided $s<\frac12$.
We also have $f+f_0\in H^s(\pa B)\ba H^{1/2}(\pa B)$ provided $s<\frac12$ and $f_0\in H^{1/2}(\pa B)$.
\end{remark}

\subsection{Reconstruct the boundary of the obstacle.}
Having determined $\gamma$ in the previous subsection, we shall proceed with the inverse problem of finding $\partial D$.

\begin{theorem}
Assume that the constant coefficient $\gamma>0$ is known and $D$ is a Dirichlet domain contained in $B$.
Then $D$ can be uniquely determined by the single pair of Cauchy data $(f,\gamma\pa_\nu u|_{\pa B})$ to (\ref{1})--(\ref{3}) provided $f$ is not identically zero.
\end{theorem}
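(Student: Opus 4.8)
The plan is to argue by contradiction, first observing that a known $\gamma$ turns the single pair of Cauchy data into a single Cauchy pair $(f,\pa_\nu u|_{\pa B})$ for the harmonic function $u$. Suppose two convex polygons $D_1\neq D_2$, both contained in $B$, produce the same Cauchy data, and let $u_1,u_2$ be the corresponding solutions of (\ref{1'})--(\ref{3'}) with this common $f$. Since $u_1$ and $u_2$ share both traces on $\pa B$ and are harmonic in the connected component $G$ of $B\ba(\ov{D_1}\cup\ov{D_2})$ whose boundary contains $\pa B$, Holmgren's theorem (see \cite[Theorem 6.7]{Kress14}) yields $u_1=u_2$ in $G$. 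In particular $u:=u_1=u_2$ is harmonic in $G$, equals $f$ on $\pa B$, and vanishes on the whole inner boundary $\pa G\ba\pa B$, which is composed of portions of $\pa D_1$ (where $u_1=0$) and of $\pa D_2$ (where $u_2=0$).

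Next I would use convexity to expose a corner. Because $D_1\neq D_2$ are convex, their support functions differ, so there is a direction $d$ with $h_{D_1}(d)>h_{D_2}(d)$; perturbing $d$ slightly to avoid the finitely many edge normals of $D_1$, I may assume the supporting set of $D_1$ in direction $d$ is a single vertex $c$. Then $c\cdot d=h_{D_1}(d)>h_{D_2}(d)\geq x\cdot d$ for every $x\in D_2$, so $c\notin\ov{D_2}$, and $c$ is the extreme point of $D_1\cup D_2$ in direction $d$, whence $c\in\pa G$ and the two edges of $D_1$ meeting at $c$ bound $G$ locally. Since $c\notin\ov{D_2}$, the function $u_2$ is harmonic in a full disk about $c$; as the two edges lie on $\pa G$ outside $\ov{D_2}$ and $u=u_2$ there, continuity gives $u_2=0$ on both edges near $c$.

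It then remains to show that a function harmonic near $c$ and vanishing on both edges of a convex corner (interior angle $\al\in(0,\pi)$) must vanish identically near $c$. Placing $c$ at the origin with the edges along $\theta=0$ and $\theta=\al$, the regular expansion $u_2=\sum_{n\geq1}b_n r^n\sin(n\theta)$ (the cosine modes are killed by vanishing on $\theta=0$), together with vanishing on $\theta=\al$, forces $b_n\sin(n\al)=0$ for all $n$. If $\al/\pi$ is irrational this gives $b_n=0$ for all $n$, so $u_2\equiv0$ near $c$; by analyticity of $u_2$ in $B\ba\ov{D_2}$ (Remark \ref{0413-1}(i)) this propagates to $u_2\equiv0$ in $B\ba\ov{D_2}$, hence $f=u_2|_{\pa B}=0$, contradicting $f\not\equiv0$. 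The genuinely delicate case is a rational angle $\al=p\pi/q$, where the two-edge condition only kills the modes with $q\nmid n$ and does \emph{not} by itself force $u_2\equiv0$ (for instance $xy$ vanishes on a right-angle corner).

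The main obstacle is therefore exactly this rational-angle corner, and I expect to dispose of it by the Schwarz reflection principle in the spirit of the one-wave factorization method \cite{EH2019,HL2020,MH22,MH2022}. A harmonic function vanishing on an edge is odd under reflection across the line carrying that edge, so $u$ continues harmonically across each edge of $\pa D_1$ lying on $\pa G$ outside $\ov{D_2}$; composing the reflections across the two edges at $c$ shows $u$ is invariant under rotation by $2\al$ about $c$. Reflecting repeatedly and walking along $\pa D_1$ from $c$ to successive vertices, each step extends the harmonic continuation of $u$ across the edges of $\pa D_1$ that lie outside $\ov{D_2}$, and one propagates this continuation until it must reach $\ov{D_2}$ or $\pa B$; the resulting single-valued harmonic function then carries either a forbidden singularity along $\pa D_2$ or a homogeneous boundary condition incompatible with $f\not\equiv0$. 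Making this propagation terminate for every rational angle, while tracking where the reflected copies land relative to $\pa D_2$ and $\pa B$, is the technical heart of the argument; everything else rests on Holmgren's theorem, the well-posedness in Theorem \ref{0405-2}, and the analyticity recorded in Remark \ref{0413-1}(i).
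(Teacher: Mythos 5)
Your first paragraph reproduces the paper's own setup (argue by contradiction, pass to the component $G$ of $B\ba\{\ov{D_1}\cup\ov{D_2}\}$ whose boundary contains $\pa B$, apply Holmgren's theorem to get $u_1=u_2$ in $G$), but from there you take a local route through corner asymptotics that contains a genuine gap: the rational-angle case. As you yourself observe, a harmonic function can vanish on both edges of a corner of interior angle $p\pi/q$ without vanishing near the corner (e.g.\ $xy$ at a right angle), and the reflection/propagation scheme you sketch to dispose of this case is never carried out --- you explicitly defer ``the technical heart of the argument.'' So the proof is incomplete exactly where you identify it to be delicate; note also that the paper's Theorem \ref{thm2.1} needs extra hypotheses (irrational angles, a distance condition, or a nonvanishing $f$) precisely to rule out extension across corners, which is evidence that a purely corner-local argument cannot close this gap under the sole assumption $f\not\equiv0$. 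A further mismatch: the statement is for a general Dirichlet domain $D$, not a convex polygon, so an argument hinging on vertices and support functions could not prove the theorem as stated even if the rational case were repaired.

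The gap disappears if you replace the local corner analysis by a global maximum-principle argument, which is what the paper does. With $G$ as in your first paragraph, set $D^*:=(B\ba G)\ba\ov{D_2}$; after possibly swapping $D_1$ and $D_2$, the set $D^*$ is nonempty because $D_1\neq D_2$. Every point of $\pa D^*$ lies either on $\pa D_2$, where $u_2=0$, or on $\pa G\cap\pa D_1$, where $u_2=u_1=0$ by the identity $u_1=u_2$ in $G$. Since $D^*\cap\ov{D_2}=\emptyset$, the function $u_2$ is harmonic in $D^*$, so the maximum-minimum principle (see \cite[Corollary 6.10]{Kress14}) gives $u_2\equiv0$ in $D^*$; by analyticity of $u_2$ in the connected open set $B\ba\ov{D_2}$ this propagates to $u_2\equiv0$ there, whence $f=u_2|_{\pa B}\equiv0$, contradicting the hypothesis. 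This argument needs no convexity, no corners, and no case distinction on angles, and it covers exactly the generality in which the theorem is stated.
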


\begin{proof}
Following the proof of \cite[Theorem 5.1]{CK19}, we {\color{xxx}assume that $D_1\neq D_2$ and} the Cauchy data corresponding to \eqref{1}--\eqref{3} with $D=D_1$ and $D=D_2$ coincide.
Let $G$ be the component of $B\ba\{\ov{D_1}\cup\ov{D_2}\}$ whose boundary contains $\pa B$.
Without loss of generality, we assume $D^*:=(B\ba G)\ba\ov{D_2}$ is nonempty.
For $j=1,2$, let $u_j$ solve
\ben
{\rm div}(\gamma\nabla u_j)=0 & \text{in}\;B\ba\ov{D_j},\\
u_j=f & \text{on}\;\pa B,\\
u_j=0 & \text{on}\;\pa D_j.
\enn
Suppose $u_1$ and $u_2$ have the same Cauchy data on $\pa B$, i.e., $(f,\gamma\pa_\nu u_1|_{\pa B})=(f,\gamma\pa_\nu u_2|_{\pa B})$.
It follows from Holmgren's theorem (see \cite[Theorem 6.7]{Kress14}) that {\color{xxx}$u_1=u_2$ in $G$}.
Since $u_j=0$ on $\pa D_j$, $j=1,2$, we know that {\color{xxx}$u_2=0$} on $\pa D^*$.
By the Maximum-Minimum Principle of harmonic functions (see \cite[Corollary 6.10]{Kress14}), we have {\color{xxx}$u_2=0$} in $D^*$.
Hence, {\color{xxx}$u_2=0$} in $G$ by analyticity.
This leads to a contradiction since {\color{xxx}$f=u_2|_{\pa B}$ is not identically zero.}
\end{proof}
\begin{remark}\label{rem3.6}
If $f$ is not identically zero, the solution $u$ to \eqref{1'}--\eqref{3'} cannot be analytically extended from $B\ba\ov{D}$ into $B$.
Actually, if $u$ can be analytically extended into $B$, then $u$ is harmonic in $B$.
Proceeding as above, we can conclude from $u=0$ on $\pa D$ that $u$ vanishes identically in $B$.
This is a contradiction to the assumption on $f$.
\end{remark}
In what follows we shall design a domain-defined sampling method for imaging a convex polygonal obstacle $D$.
Below we show that the analytical extension across a corner of $\partial D$ is impossible.

\begin{theorem}\label{thm2.1}
Assume that the constant coefficient $\gamma>0$ is known, $f$ is not identically zero, and $D$ is a Dirichlet convex polygon.
Assume further that one of the following statement holds:

(i) All the corners of $D$ are irrational angles;

(ii) ${\rm dist}(\pa B,D)>{\rm diam}(D)$;

(iii) $f\in C(\pa B)$ and $f(x)\neq0$ for all $x\in\pa B$.

Then $u$ cannot be analytically extended across the corners of $D$.
\end{theorem}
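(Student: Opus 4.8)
The plan is to argue by contradiction: suppose $u$ can be analytically extended across some corner $x_0$ of the polygon $D$. The key geometric fact is that at a corner, two straight edges of $\partial D$ meet, and on both of these edges we have $u=0$ by the Dirichlet condition \eqref{3'}. If $u$ extends analytically into a full neighborhood of $x_0$, then the extended function (still harmonic) vanishes on two line segments through $x_0$ meeting at the interior angle of the polygon. The strategy is to exploit this double vanishing on lines through a point to force $u$ to vanish identically, and then to derive a contradiction with the hypotheses in each of the three cases.

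First I would set up local polar coordinates $(r,\phi)$ centered at the corner $x_0$, with the two edges lying along the rays $\phi=0$ and $\phi=\alpha$, where $\alpha$ is the interior angle. A harmonic function analytic near $x_0$ has a local expansion $u(r,\phi)=\sum_{n} r^{n}\left(a_n\cos(n\phi)+b_n\sin(n\phi)\right)$ plus the harmonic polynomials of each degree. The vanishing of $u$ on $\phi=0$ and $\phi=\alpha$ imposes strong constraints: vanishing on $\phi=0$ kills all the cosine terms, leaving $u=\sum_n b_n r^n \sin(n\phi)$, and then vanishing on $\phi=\alpha$ forces $b_n\sin(n\alpha)=0$ for every $n$. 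This is exactly where the three cases diverge. In case (i), when $\alpha/\pi$ is irrational, $\sin(n\alpha)\neq0$ for all $n\geq1$, so every $b_n=0$ and the local expansion shows $u\equiv0$ near $x_0$; by analyticity (Remark \ref{0413-1} (i)) $u$ vanishes throughout $B\ba\ov{D}$, contradicting that $f$ is not identically zero as in Remark \ref{rem3.6}.

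For the remaining cases the vanishing argument does not immediately annihilate all coefficients, so a different contradiction must be engineered. In case (ii), the hypothesis ${\rm dist}(\pa B,D)>{\rm diam}(D)$ is a geometric separation condition: extending $u$ analytically across the corner allows one to reflect or continue $u$ into a region whose reflected copy reaches $\pa B$, and the diameter bound guarantees the continued harmonic function would have to agree with $u$ on overlapping domains while also being forced to vanish, again yielding $u\equiv0$ and contradicting the assumption on $f$. In case (iii), the continuity and nonvanishing of $f$ on all of $\pa B$ is used to rule out the pathological coefficient cancellations: here I would use the reflection principle across each straight edge (harmonic functions vanishing on a line segment extend by odd reflection) to build an extension that eventually propagates the zero set to $\pa B$, contradicting $f(x)\neq0$ everywhere.

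The main obstacle I anticipate is cases (ii) and (iii), where the interior angle may be a rational multiple of $\pi$ and the simple Fourier-coefficient argument of case (i) fails. The crux is to convert a local analytic extension near a single corner into a global statement reaching $\pa B$. I expect this requires combining the Schwarz reflection principle across the polygon's straight edges with a careful tracking of how the reflected zero sets fill out the region between $D$ and $\pa B$ — the separation condition in (ii) and the pointwise nonvanishing in (iii) are precisely what close off the two distinct ways this propagation could otherwise be obstructed. Handling the geometry of successive reflections, and ensuring the reflected domains stay inside $B$ and actually reach the boundary, will be the delicate part of the argument.
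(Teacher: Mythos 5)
Your argument for case (i) is complete and correct: the local expansion $u=\sum_n r^n(a_n\cos n\phi+b_n\sin n\phi)$ around the corner, killing the cosine coefficients by vanishing on one edge and then using $\sin(n\alpha)\neq 0$ for all $n\geq 1$ when $\alpha/\pi$ is irrational, does force $u\equiv 0$ near the corner, and analytic continuation plus Remark \ref{rem3.6} gives the contradiction. (Interestingly, this is more self-contained than the paper, which simply cites \cite[Lemma 2.4]{WHB23} for cases (i) and (ii).) The genuine gap is in cases (ii) and (iii): what you offer there is a statement of intent ("reflect or continue $u$ into a region whose reflected copy reaches $\pa B$", "propagates the zero set to $\pa B$") rather than an argument, and for case (iii) the mechanism you propose is actually misdirected. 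Odd reflection of $u$ across an edge extends $u$ into the reflected region, which overlaps $D$ itself; it does not move the zero set outward toward $\pa B$, and you give no reason why successive reflections would terminate at, or even approach, the outer boundary. Since (iii) is the only case the paper proves in detail, this is the part where your proposal cannot be accepted as written.

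The paper's proof of (iii) needs no reflections and is essentially one-dimensional: assume $u$ extends analytically across a corner $z$, and let $\Gamma$ be an edge of $D$ ending at $z$. Since $u$ is analytic in a neighborhood of $z$ and vanishes on $\Gamma$, the restriction of $u$ to the straight line containing $\Gamma$ is real-analytic there and vanishes on a segment, hence $u$ vanishes on that line throughout the connected portion lying in the domain of analyticity --- in particular on the prolongation of $\Gamma$ beyond $z$. By convexity of $D$, this prolonged ray stays in $B\ba\ov{D}$ (the line supporting an edge of a convex polygon meets $\ov{D}$ only along that edge) and therefore must hit $\pa B$ at some point $x_0$. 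Using $f\in C(\pa B)$ to get continuity of $u$ up to $\pa B$ away from $D$, one concludes $f(x_0)=u(x_0)=0$, contradicting the pointwise nonvanishing of $f$. Note how each hypothesis is consumed: convexity guarantees the extended edge reaches $\pa B$ inside the harmonicity region, and the hypothesis $f(x)\neq 0$ for \emph{all} $x$ is exactly what a single boundary zero violates. Your case (ii) sketch has the same defect as your (iii): "the diameter bound guarantees the continued harmonic function would have to agree with $u$ on overlapping domains while also being forced to vanish" is the conclusion you want, not a derivation; the role of ${\rm dist}(\pa B,D)>{\rm diam}(D)$ (keeping reflected copies of $D$ inside $B$ so the Schwarz reflections are legitimate) is never actually exploited in a chain of deductions, so that case too remains unproven in your proposal.
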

\begin{proof}
For the proof of statements (i) and (ii), we refer the reader to \cite[Lemma 2.4]{WHB23}.

It remains to show statement (iii).
Since $f\in C(\pa B)$, one can show $u\in C^2(B\ba\ov{B_0})\cap C(\ov{B}\ba B_0)$, where $B_0$ is an open subset of $B$ such that $\ov{D}\subset B_0\subset\ov{B_0}\subset B$.
Assume to the contrary that $u$ can be analytically extended across a corner $z$ of $D$.
Consider an edge $\Gamma$ of $D$ that has $z$ as one of its endpoints.
Since $u$ vanishes on $\Gamma$ and analytic in a neighborhood of $z$ and $B\ba\ov{D}$, we conclude that $u$ vanishes on the extended line of $\Gamma$ that containing $z$ as an interior point.
Because of convexity, the extended line must intersect with $\pa B$ at a point $x_0$.
Therefore, $f(x_0)\!=\!u(x_0)\!=\!0$.
This is a contradiction to the assumption on $f$.
\end{proof}
By Theorem \ref{thm2.1} we can characterize $D$ by sampling domains $\Omega$ and the testing function $g_\gamma$.
\begin{theorem}\label{thm211027}
Let $u$ solve (\ref{1})--(\ref{3}) and $u_0$ solve (\ref{5})--(\ref{6}) with the same boundary value $f$ on $\pa B$.
Set $g_\gamma:=\left.(\gamma\pa_\nu u-\gamma\pa_\nu u_0)\right|_{\pa B}$.
Let the assumptions of Theorem \ref{thm2.1} be fulfilled.
Assume further that $\Omega$ is a convex domain.
Then $D\subset\Omega$ if and only if $g_\gamma\in{\rm Ran}\,G_{\Omega}$.
\end{theorem}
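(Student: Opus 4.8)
The plan is to prove the two directions separately, relying on the factorization $\Lambda_D-\Lambda_0=G_DS_{\pa D}^*G_D^*$ together with the range characterization in Theorem \ref{thm3.8} and the non-extendability across corners established in Theorem \ref{thm2.1}.

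\textbf{The easy direction ($D\subset\Omega\Rightarrow g_\gamma\in{\rm Ran}\,G_\Omega$).}
First I would observe that $g_\gamma=\gamma\,\pa_\nu(u-u_0)|_{\pa B}$, and that $w:=u-u_0$ is harmonic in $B\ba\ov{D}$ with $w=f-f=0$ on $\pa B$. When $D\subset\Omega$, the function $w$ is in particular harmonic in the smaller annular region $B\ba\ov\Omega$ and still vanishes on $\pa B$. Hence $w$ solves the defining boundary value problem (\ref{7})--(\ref{9}) for $G_\Omega$ with Dirichlet data $g:=\gamma\,w|_{\pa\Omega}\in H^{1/2}(\pa\Omega)$, so that $g_\gamma=G_\Omega g\in{\rm Ran}\,G_\Omega$. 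This is essentially the argument already used in Theorem \ref{thm4.1}(i) with $\Omega$ in place of the domain there, so it should go through directly.

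\textbf{The hard direction ($g_\gamma\in{\rm Ran}\,G_\Omega\Rightarrow D\subset\Omega$).}
Here I would argue by contraposition: assume $D\not\subset\Omega$ and derive that $g_\gamma\notin{\rm Ran}\,G_\Omega$. Suppose for contradiction that $g_\gamma=G_\Omega g$ for some $g\in H^{1/2}(\pa\Omega)$, and let $W$ solve (\ref{7})--(\ref{9}) on $B\ba\ov\Omega$ with $\pa_\nu W=g_\gamma$ on $\pa B$ and $W=0$ on $\pa B$. Since $w=u-u_0$ also satisfies $\pa_\nu(\gamma w)=g_\gamma$ and $w=0$ on $\pa B$, the Cauchy data of $\gamma w$ and $W$ agree on $\pa B$; Holmgren's theorem (see \cite[Theorem 6.7]{Kress14}) then forces $W=\gamma w$ throughout the connected region where both are harmonic, namely $B\ba(\ov\Omega\cup\ov D)$. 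The key point is that $W$ is harmonic in all of $B\ba\ov\Omega$, so this identity provides a harmonic extension of $\gamma w=\gamma(u-u_0)$ into $B\ba\ov\Omega$. Because $u_0$ is already harmonic (indeed analytic) in all of $B$ by Remark \ref{0413-1}(i), this yields an analytic extension of $u$ itself into the region $B\ba\ov\Omega$, and in particular across any corner $z$ of $D$ lying outside $\ov\Omega$.

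\textbf{Locating the obstructing corner and invoking non-extendability.}
The main obstacle is to guarantee that the failure $D\not\subset\Omega$ actually produces a corner of $D$ across which the extension is genuinely impossible. Here convexity of $\Omega$ is essential: since $D$ is a convex polygon not contained in the convex domain $\Omega$, at least one vertex $z$ of $D$ must lie outside $\ov\Omega$, and I would need to check carefully that the separating-hyperplane geometry places $z$ in the open region $B\ba\ov\Omega$ reached by the Holmgren continuation (the convexity of both sets is what lets the extended edge and the vertex sit on the correct side). Once $z$ is such a corner, the argument above exhibits an analytic extension of $u$ across $z$, which directly contradicts Theorem \ref{thm2.1}. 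This completes the contrapositive and hence the proof. I expect the bookkeeping of which connected component $W=\gamma w$ holds on, and confirming that the offending vertex is reached by the continuation rather than hidden behind $\ov\Omega$, to be the delicate steps; the analytic extension and the contradiction with Theorem \ref{thm2.1} are then immediate.
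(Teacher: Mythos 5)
Your proposal is correct and follows essentially the same route as the paper: the easy direction via $g_\gamma=G_\Omega\bigl[\gamma(u-u_0)|_{\pa\Omega}\bigr]$, and the converse by contradiction using Holmgren's theorem to identify the $G_\Omega$-solution with $\gamma(u-u_0)$ on $B\ba(\ov{D}\cup\ov{\Omega})$, then using convexity of $D$ and $\Omega$ to produce a vertex of $D$ in a neighbourhood disjoint from $\ov{\Omega}$, so that the resulting harmonic extension of $u$ across that corner contradicts Theorem \ref{thm2.1}. The geometric step you flag as delicate (a vertex of $D$ must lie outside $\ov{\Omega}$) is exactly the point the paper also invokes, and your separating-hyperplane justification fills it in correctly.
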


\begin{proof}
If $D\subset\Omega$, then $(B\ba\ov{\Omega})\subset(B\ba\ov{D})$ and thus $g_\gamma=G_{\Omega}\left[\gamma(u-u_0)|_{\pa \Omega}\right]$ on $\pa B$.

Now, we consider the case when $D\not\subset\Omega$.
Assume to the contrary that {\color{xxx}$\left.\pa_\nu(u-u_0)\right|_{\pa B}\in{\rm Ran}\,G_{\Omega}$}, i.e., there exists $g\in H^{1/2}(\pa \Omega)$ such that $g_\gamma=G_{\Omega}g$ on $\pa B$.
Therefore, the unique solution $w$ of the boundary value problem
\ben
\Delta w=0 & \text{in}\;B\ba\ov{\Omega},\\
w=0 & \text{on}\;\pa B,\\
w=g & \text{on}\;\pa \Omega,
\enn
must satisfy $\pa_\nu w=g_\gamma$ on $\pa B$.
From (\ref{1})--(\ref{3}) and (\ref{5})--(\ref{6}) we obtain
\ben
\Delta(u-u_0)=0 & \text{in}\;B\ba\ov{\Omega},\\
(u-u_0)=0 & \text{on}\;\pa B.
\enn
{\color{xxx}Since $\gamma\pa_\nu(u-u_0)=g_\gamma$ on $\pa B$,} the Cauchy data of $\gamma(u-u_0)$ and $w$ coincide on $\pa B$.
It follows from Holmgren's theorem (see \cite[Theorem 6.7]{Kress14}) that
\be\label{211027-1}
w=\gamma(u-u_0)\quad\text{in }B\ba(\ov{D}\cup\ov{\Omega}).
\en
Noting that $D\not\subset\Omega$ and $\Omega$ is convex, we can find a corner $z$ of $D$ and an open neighbourhood $V$ of $z$ such that $V$ dose not intersect with the closure of $\Omega$.
We deduce from (\ref{211027-1}) that $u$ can be extended as a harmonic function in $V$, which contradicts the result of Theorem \ref{thm2.1}.
\end{proof}

\begin{remark}\label{rem1029-2}
Denote by $\mathcal O$ the set of all convex Lipschitz domains that contained {\color{xxx}in} $B$.
Let the assumptions of Theorem \ref{thm2.1} be fulfilled.
By Theorems \ref{thm3.8} and \ref{thm211027} we have
\ben
D=\bigcap_{\Om\in\{\widetilde\Omega\in\mathcal O:g_{\gamma}\in{\rm Ran}\,G_{\widetilde\Omega}\}}\Omega=\bigcap_{\Om\in\{\widetilde\Omega\in\mathcal O:g_{\gamma}\in{\rm Ran}\,(\Lambda_{\widetilde\Omega}\!-\!\Lambda_0)^{1/2}\}}{\color{xxx}\Omega.}
\enn
\end{remark}

{\color{xxx}According to} the above analysis (see also \cite[Theorems 6.2 and 6.3]{WHB21}), we defined the following indicator function to recover the location and rough profile of $D$:
\be\label{I2}
I_2(\Omega):=\left[\sum_n\frac{|(g _\gamma,f_n)|^2}{|\lambda_n|}\right]^{-1},
\en
where $\Omega\in\mathcal O$ with $\mathcal O$ introduced in Remark \ref{rem1029-2}, $(\cdot,\cdot)$ denotes the inner product in $L^2(\pa B)$ and $(\lambda_n,f_n)$ is an eigensystem of $(\Lambda_{\Omega}-\Lambda_0)$.
Under the assumptions of Theorems \ref{thm2.1} and \ref{thm211027}, we conclude from Remark \ref{rem1029-1}, Picard's theorem (see \cite[Theorem 4.8]{CK19}) and Remark \ref{0915} (ii) that
\ben
I_2(\Omega)\begin{cases}
>0 & \text{if }D\!\subset\!\Om,\\
=0 & {\color{xxx}\text{if }D\!\not\subset\!\Om}.
\end{cases}
\enn
Moreover, in view of Remark \ref{rem3.6}, a rough location of $D$ can be recovered by \eqref{I2} even if the assumptions of Theorem \ref{thm2.1} cannot be fulfilled.

\section{Numerical simulation of forward problems}\label{s3.5}
\setcounter{equation}{0}

We note that the boundary value problems (\ref{1})--(\ref{3}) (including (\ref{1'})--(\ref{3'})) and (\ref{5})--(\ref{6}) can be reduced to boundary integral equations (see \cite[Chapter 6]{Kress14}), which can be numerically calculated by Nystr\"om's method (see \cite[Chapter 12]{Kress14} and \cite[Section 3.6]{CK19}).
To calculate $\Lambda_0$, $\Lambda_\Om$ and $\Lambda_D$, it suffices to represent these Dirichlet-to-Neumann operators in terms of boundary integral boundary operators introduced at the beginning of Section \ref{s2}.

\begin{theorem}\label{0913}
Let $f\in H^s(\pa B)$ with $s\in[-\frac12,\frac12]$.
The double-layer potential $u_0:=\mathcal D_{0,\pa B}\varphi_{\pa B}$ with density $\varphi_{\pa B}\in H^s(\pa B)$ is a solution of the Dirichlet boundary value problem (\ref{5})--(\ref{6}), provided that $\varphi_{\pa B}$ is a solution of the boundary integral equation
\be\label{0913-5}
(K_{0,\pa B}-I)\varphi_{\pa B}=2f,
\en
where $I$ denotes the identity operator.
Furthermore, (\ref{0913-5}) is uniquely solvable for all $f\in H^s(\pa B)$ and the Dirichlet-to-Neumann operator $\Lambda_0$ can be represented by
\ben
\Lambda_0=T_{0,\pa B}(K_{0,\pa B}-I)^{-1}.
\enn
Moreover, $\Lambda_0:H^s(\pa B)\ra H^{s-1}(\pa B)$ is bounded.
\end{theorem}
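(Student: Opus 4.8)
The plan is to establish the theorem in the standard order for the integral equation method applied to a Dirichlet problem: first the ansatz solves the PDE and matches the boundary data, then the integral equation is uniquely solvable, and finally the Dirichlet-to-Neumann operator inherits the representation and boundedness. First I would verify that for any density $\varphi_{\pa B}$ the double-layer potential $u_0:=\mathcal D_{0,\pa B}\varphi_{\pa B}$ is automatically harmonic in $B$, since $\Phi_0(x,y)$ is harmonic in $x$ away from $y$ and differentiation under the integral sign is justified for $x\in B$ away from $\pa B$. To impose the boundary condition $u_0=f$ on $\pa B$, I would appeal to the jump relations for the double-layer potential (as recorded in \cite{CK19}): the interior trace satisfies $(u_0)_-=\tfrac12(K_{0,\pa B}-I)\varphi_{\pa B}$, so requiring this equal to $f$ yields precisely the integral equation \eqref{0913-5}. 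Thus any solution $\varphi_{\pa B}$ of \eqref{0913-5} produces a solution $u_0$ of \eqref{5}--\eqref{6}.

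Next I would prove unique solvability of \eqref{0913-5}. The operator $K_{0,\pa B}$ is compact on $H^s(\pa B)$ (its kernel has a weaker singularity on the smooth, here $C^2$, boundary $\pa B$), so $K_{0,\pa B}-I$ is a Fredholm operator of index zero and it suffices to establish injectivity. If $(K_{0,\pa B}-I)\varphi_{\pa B}=0$, then the corresponding double-layer potential $u_0$ solves the interior Dirichlet problem \eqref{5}--\eqref{6} with $f=0$, and by the uniqueness of that problem (Theorem \ref{0404-4}) we get $u_0=0$ in $B$. I would then use the jump of the normal derivative of the double-layer potential, which is continuous across $\pa B$, together with the exterior behaviour: $u_0$ also solves a homogeneous exterior problem, and combining the vanishing Cauchy data from inside with the jump relations forces $\varphi_{\pa B}=0$. (This is the classical argument that $I$ is not an eigenvalue of $K_{0,\pa B}$ for the interior Dirichlet problem on a connected boundary.) By the Riesz--Fredholm theory, $(K_{0,\pa B}-I)^{-1}$ exists and is bounded on $H^s(\pa B)$, giving unique solvability for all $f\in H^s(\pa B)$.

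Finally I would read off the Dirichlet-to-Neumann operator. By definition $\Lambda_0 f=\pa_\nu u_0$ on $\pa B$, and since $u_0=\mathcal D_{0,\pa B}\varphi_{\pa B}$ with $\varphi_{\pa B}=(K_{0,\pa B}-I)^{-1}(2f)$, the normal derivative of the double-layer potential is delivered by the hypersingular operator through $\pa_\nu(\mathcal D_{0,\pa B}\varphi_{\pa B})=\tfrac12 T_{0,\pa B}\varphi_{\pa B}$ (using the normalization that builds the factor $2$ into $T_{0,\pa B}$). Substituting yields $\Lambda_0=T_{0,\pa B}(K_{0,\pa B}-I)^{-1}$. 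Boundedness of $\Lambda_0:H^s(\pa B)\ra H^{s-1}(\pa B)$ then follows from the mapping properties of the constituent operators, namely $(K_{0,\pa B}-I)^{-1}:H^s(\pa B)\ra H^s(\pa B)$ bounded and $T_{0,\pa B}:H^s(\pa B)\ra H^{s-1}(\pa B)$ bounded; alternatively it is consistent with the bound already recorded in Remark \ref{0413-1} (iii).

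I expect the main obstacle to be the careful justification of the boundedness and jump relations at the nonstandard Sobolev exponents $s\in[-\tfrac12,\tfrac12]$, rather than in the classical $C^{0,\alpha}$ setting. The hypersingular operator $T_{0,\pa B}$ is only bounded $H^{1/2}\to H^{-1/2}$ in the natural energy scale, so extending the jump relation $\pa_\nu\mathcal D_{0,\pa B}\varphi_{\pa B}=\tfrac12 T_{0,\pa B}\varphi_{\pa B}$ and the identity for $\Lambda_0$ down to $s=-\tfrac12$ will require a density argument: prove everything first for smooth $f$ (e.g. $f\in C^{0,\alpha}(\pa B)$ or $f\in H^{1/2}$), where all the classical layer-potential results in \cite{CK19,Kress14} apply verbatim, and then pass to the limit using the denseness of smooth functions in $H^s(\pa B)$ together with the established boundedness of each operator on the full range $s\in[-\tfrac12,\tfrac12]$. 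This is exactly the kind of approximation argument already used in the proof of Lemma \ref{t3.4}, so I would model the limiting step on that.
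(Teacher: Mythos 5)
Your proposal is correct and follows essentially the same route as the paper: the paper's proof simply extends the jump relations and mapping properties to the Sobolev scale by denseness (citing Costabel/McLean), notes $K_{0,\pa B}:H^s(\pa B)\ra H^{s+1}(\pa B)$ is bounded hence compact on $H^s(\pa B)$, and then ``proceeds as in'' the classical Kress arguments (Theorems 6.22--6.23) that you spell out — jump relation giving \eqref{0913-5}, Riesz--Fredholm plus the interior/exterior uniqueness argument for injectivity, and the hypersingular operator delivering $\Lambda_0=T_{0,\pa B}(K_{0,\pa B}-I)^{-1}$. Your only imprecision is the remark that $T_{0,\pa B}$ is bounded only on the energy scale $H^{1/2}\ra H^{-1/2}$: since $\pa B$ is a smooth (indeed analytic) circle, the paper obtains $T_{0,\pa B}:H^s(\pa B)\ra H^{s-1}(\pa B)$ for the whole range $s\in[-\frac12,\frac12]$ from \cite[Theorem 3.6 and Corollary 10.13]{CK19} and interpolation, which your density argument in any case recovers.
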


\begin{proof}
By denseness arguments as in the proof of Theorem \ref{0404-3}, the jump relations and regularity properties for the boundary trace and the normal derivative trace of the single- and double-layer potential remain valid in the Sobolev space setting (see \cite{Costabel,WM}).
Therefore, $K_{0,\pa B}:H^s(\pa B)\ra H^{s+1}(\pa B)$ is bounded, and thus $K_{0,\pa B}:H^s(\pa B)\ra H^s(\pa B)$ is compact.
Proceeding as in the proofs of \cite[Theorems 6.22 and 6.23]{Kress14}, {\color{xxx}we can prove this theorem}.

Moreover, it follows from \cite[Theorem 3.6 and Corollary 10.13]{CK19} and the interpolation property of the Sobolev spaces (see \cite[Theorem 8.13]{Kress14}) that $T_{0,\pa B}:H^s(\pa B)\ra H^{s-1}(\pa B)$ is bounded.
In view of Theorem \ref{0404-4}, Remark \ref{0413-1} (iv), by the denseness argument {\color{xxx}as} in the proof of Theorem \ref{0404-3} and jump relations, we conclude from \cite[Theorem 10.12]{CK19} that $\Lambda_0=T_{0,\pa B}(K_{0,\pa B}-I)^{-1}$ and $\Lambda_0:H^s(\pa B)\ra H^{s-1}(\pa B)$ is bounded.
\end{proof}
\begin{theorem}\label{0914}
Let $f\in H^s(\pa B)$ with $s\in[-\frac12,\frac12]$.
The combined layer potential
\be\label{0914-6}
u=\mathcal D_{0,\pa B}\varphi_{\pa B}+\mathcal S_{0,\pa D}\varphi_{\pa D}
\en
with densities $\varphi_{\pa B}\in H^s(\pa B)$ and $\varphi_{\pa D}\in H^{-1/2}(\pa D)$ is a solution of the Dirichlet problem (\ref{1'})--(\ref{3'}) provided that $\varphi_{\pa B}$ and $\varphi_{\pa D}$ satisfy the boundary integral equations
\be\label{0913-4}
\left(\begin{array}{cc}
K_{0,\pa B}-I & S_{0,\pa D\ra\pa B}\\
K_{0,\pa B\ra\pa D} & S_{0,\pa D}
\end{array}\right)\left(\begin{array}{c}
\varphi_{\pa B}\\
\varphi_{\pa D}
\end{array}\right)=\left(\begin{array}{c}
2f\\
0
\end{array}\right).
\en
Furthermore, (\ref{0913-4}) is uniquely solvable for all $f\in H^s(\pa B)$ and {\color{xxx}the Dirichlet-to-Neumann operator $\Lambda_D$} can be represented by
\be\label{0507}
\Lambda_Df=\left(\begin{array}{cc}
T_{0,\pa B} & K'_{0,\pa D\ra\pa B}
\end{array}\right)\left(\begin{array}{cc}
K_{0,\pa B}-I & S_{0,\pa D\ra\pa B}\\
K_{0,\pa B\ra\pa D} & S_{0,\pa D}
\end{array}\right)^{-1}\left(\begin{array}{c}
f\\
0
\end{array}\right).
\en
Moreover, $\Lambda_D:H^s(\pa B)\ra H^{s-1}(\pa B)$ is bounded.
\end{theorem}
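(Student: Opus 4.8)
The plan is to mimic the combined-potential approach of \cite[Theorems 6.22 and 6.23]{Kress14}, now carried out on the two-boundary geometry $\pa B\cup\pa D$, in four stages: reduce the boundary value problem to the coupled system, establish unique solvability of that system, read off the representation of $\Lambda_D$, and finally track the mapping properties. First I would verify that the combined potential \eqref{0914-6} turns (\ref{1'})--(\ref{3'}) into the system \eqref{0913-4}. Since $\pa B\cap\pa D=\emptyset$, each layer potential is harmonic away from its own source boundary, so $u$ is harmonic in $B\ba\ov{D}$ and (\ref{1'}) holds automatically. Taking the interior trace on $\pa B$ and the trace from $B\ba\ov{D}$ on $\pa D$, and invoking the jump relations for the single- and double-layer potentials (valid on the $C^2$ curve $\pa B$ and, by the Lipschitz theory of \cite{Costabel,WM}, on the polygon $\pa D$), the conditions $u=f$ on $\pa B$ and $u=0$ on $\pa D$ become precisely the two rows of \eqref{0913-4}; the factor $2$ built into the operator definitions absorbs the $\tfrac12$ in the jump terms.

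The core of the argument is the unique solvability of \eqref{0913-4}, which I would obtain from Fredholmness of index zero together with injectivity. Writing the system operator as $A$ on $H^s(\pa B)\times H^{-1/2}(\pa D)$, the off-diagonal blocks $S_{0,\pa D\ra\pa B}$ and $K_{0,\pa B\ra\pa D}$ have smooth kernels, because their source and target boundaries are disjoint, hence are compact; therefore the index of $A$ equals that of its diagonal ${\rm diag}(K_{0,\pa B}-I,\,S_{0,\pa D})$. The first diagonal block is invertible by Theorem~\ref{0913}, and the single-layer block $S_{0,\pa D}$ is Fredholm of index zero on the Lipschitz boundary $\pa D$ by \cite{Costabel,WM}, so $A$ is Fredholm of index zero. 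Notably this route never requires $S_{0,\pa D}$ to be individually invertible, which sidesteps the $2$D logarithmic-capacity obstruction.

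For injectivity I would suppose $A(\varphi_{\pa B},\varphi_{\pa D})^{\top}=0$, so that $u$ from \eqref{0914-6} solves (\ref{1'})--(\ref{3'}) with $f=0$; uniqueness of the forward problem (Theorem~\ref{0405-2}) gives $u\equiv0$ in $B\ba\ov{D}$. Since both layer potentials are continuous in trace across $\pa D$, the interior trace of $u$ on $\pa D$ also vanishes, so interior uniqueness on $D$ forces $u\equiv0$ in $D$; the jump of the normal derivative of the single-layer potential across $\pa D$ then yields $\varphi_{\pa D}=0$. With the single-layer term eliminated, $u=\mathcal D_{0,\pa B}\varphi_{\pa B}$ is harmonic throughout $B$ and vanishes on the open set $B\ba\ov{D}$, hence vanishes in all of $B$ by analyticity. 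Because the double-layer potential decays at infinity and has continuous normal derivative across $\pa B$, the exterior Neumann problem with decay forces $u\equiv0$ outside $\ov{B}$, and the trace jump across $\pa B$ finally gives $\varphi_{\pa B}=0$. Injectivity together with index zero yields that $A$ has a bounded inverse, which also establishes the unique solvability asserted in the statement.

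It then remains to extract \eqref{0507} and the mapping property. Solving \eqref{0913-4} gives $(\varphi_{\pa B},\varphi_{\pa D})^{\top}=A^{-1}(2f,0)^{\top}$, and differentiating \eqref{0914-6} along $\nu$ on $\pa B$ — using $T_{0,\pa B}$ for the double-layer term and the smoothing operator $K'_{0,\pa D\ra\pa B}$ for the single-layer term, each carrying the built-in factor $2$ — writes $\Lambda_Df=\pa_\nu u|_{\pa B}$ as the stated product, the factors of $2$ cancelling. Boundedness of $\Lambda_D:H^s(\pa B)\ra H^{s-1}(\pa B)$ for $s\in[-\tfrac12,\tfrac12]$ then follows by composing the mapping properties of $T_{0,\pa B}$, of $K'_{0,\pa D\ra\pa B}$, and of $A^{-1}$, together with the interpolation argument used for Theorem~\ref{0913}. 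The step I expect to be the main obstacle is the injectivity argument, which must be executed region by region and with attention to the $2$D peculiarities (the decay of the double-layer potential and exterior Neumann uniqueness); the crucial point is the ordering, namely that $\varphi_{\pa D}$ is shown to vanish before the exterior of $B$ is analysed, so that the logarithmic growth of the single-layer potential at infinity never enters the argument.
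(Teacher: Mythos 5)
Your proposal is correct and follows essentially the same route as the paper: jump relations to verify the boundary value problem, a Riesz--Fredholm argument writing the system as an invertible/index-zero diagonal plus compact off-diagonal and remainder terms, injectivity proved region by region (interior uniqueness in $B\ba\ov{D}$ and in $D$, then $\varphi_{\pa D}=0$ from the normal-derivative jump, then exterior Neumann uniqueness with the $o(1)$ decay of the double-layer potential to get $\varphi_{\pa B}=0$), and finally the representation of $\Lambda_D$ with the factors of $2$ cancelling. The only cosmetic difference is that you invoke Lipschitz-boundary Fredholm theory for $S_{0,\pa D}$ directly, whereas the paper anchors the index-zero property by splitting $S_{0,\pa D}=S_{i,\pa D}+(S_{0,\pa D}-S_{i,\pa D})$ with $S_{i,\pa D}$ invertible; both are the same compact-perturbation argument, and your observation about the ordering of the injectivity steps (eliminating $\varphi_{\pa D}$ before analysing the exterior) matches the paper exactly.
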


\begin{proof}
By the jump relations and regularity properties in the Sobolev space setting (see \cite{Costabel,WM}), it is easy to see that the combined layer potential (\ref{0914-6}) with densities $\varphi_{\pa B}$ and $\varphi_{\pa D}$ satisfying (\ref{0913-4}) is a solution to the Dirichlet problem (\ref{1'})--(\ref{3'}).

From the proof of Theorem \ref{0913} we know $K_{0,\pa B}-I:H^s(\pa B)\ra H^s(\pa B)$ has a bounded inverse. {\color{hgh} The operator}
$S_{i,\pa D}:H^{-1/2}(\pa D)\ra H^{1/2}(\pa D)$ also has a bounded inverse (see \cite[Theorem 5.44]{KH}).
Moreover, $S_{0,\pa D\ra\pa B}:H^{-1/2}(\pa D)\ra H^s(\pa B)$ and $K_{0,\pa B\ra\pa D}:H^s(\pa B)\ra H^{1/2}(\pa D)$ are compact due to the regularity properties of surface potentials, and $S_{0,\pa D}-S_{i,\pa D}:H^{-1/2}(\pa D)\ra H^{1/2}(\pa D)$ is also compact {\color{xxx}due to the increased smoothness of the integral kernel as compared with that of $S_{i,\pa D}$}.
By the Riesz-Fredholm theory we conclude from
\be\label{0414-2}
\left(\begin{array}{cc}
K_{0,\pa B}-I & S_{0,\pa D\ra\pa B}\\
K_{0,\pa B\ra\pa D} & S_{0,\pa D}
\end{array}\right)=\left(\begin{array}{cc}
K_{0,\pa B}-I & 0\\
0 & S_{i,\pa D}
\end{array}\right)+\left(\begin{array}{cc}
0 & S_{0,\pa D\ra\pa B}\\
K_{0,\pa B\ra\pa D} & S_{0,\pa D}-S_{i,\pa D}
\end{array}\right)
\en
that (\ref{0913-4}) is uniquely solvable if and only if (\ref{0414-2}) is injective.

{\color{xxx}Let $u$ be given by \eqref{0914-6} with $\varphi_{\pa B}$ and $\varphi_{\pa D}$ satisfy \eqref{0913-4} wtih $f=0$ on $\pa B$.}
By the uniqueness for the interior Dirichlet problem in $B\ba\ov{D}$ and in $D$, respectively, we have $u=0$ in $B\ba\ov{D}$ and in $D$, respectively.
By the jump relation we have
\be\label{0914-8}
\varphi_{\pa D}=\pa_\nu u_--\pa_\nu u_+=0\quad\text{on}\;\pa D.
\en
Therefore, {\color{xxx}it follows from} (\ref{0914-6}) that
\be\label{0915-1}
u=\mathcal D_{0,\pa B}\varphi_{\pa B}.
\en
The jump relation now yields $\pa_\nu u_\pm=0$ on $\pa B$.
Note that (\ref{0915-1}) implies $u(x)=o(1)$ as $|x|\ra\infty$.
It follows from the uniqueness of the exterior Neumann problem (see \cite[Theorem 6.13]{Kress14}) that $u=0$ in $\R^2\ba\ov{B}$.
By the jump relation we have
\be\label{0914-7}
{\color{xxx}\varphi_{\pa B}=u_+-u_-}=0\quad\text{on}\;\pa B.
\en
Combining (\ref{0914-8}) and (\ref{0914-7}) we thus obtain the injectivity of (\ref{0414-2}).

Finally, by noting that
\ben
\pa_\nu u_0=\frac12\left(\begin{array}{cc}
T_0\varphi_{\pa B}& K'_{0,\pa D\ra\pa B}
\end{array}\right)\left(\begin{array}{c}
\varphi_{\pa B}\\
\varphi_{\pa D}
\end{array}\right)\quad\text{on}\;\pa B,
\enn
we arrive at \eqref{0507}.
Since {\color{xxx}$K'_{0,\pa D\ra\pa B}:H^{-1/2}(\pa D)\ra H^{s-1}(\pa B)$} and $T_0:H^{s}(\pa B)\ra H^{s-1}(\pa B)$ are bonuded, we know $\Lambda_D:H^s(\pa B)\ra H^{s-1}(\pa B)$ is bounded.
\end{proof}

\begin{remark}
(i) In contrast to the invertible operator $S_{\pa D}:H^{-1/2}(\pa D)\ra H^{1/2}(\pa D)$ defined in Theorem \ref{t+}, the operator $S_{0,\pa D}:H^{-1/2}(\pa D)\ra H^{1/2}(\pa D)$ is in general not injective in two dimensions (see \cite[Theorem 7.38]{Kress14}).
A modified operator of $S_{0,\pa D}$ has been proposed to overcome this difficulty (see \cite[Theorem 7.41]{Kress14}).
Here, we avoid this difficulty by using the combined layer potential (\ref{0914-6}).

(ii) {\color{xxx}Let $s\in[-\frac12,\frac12]$.}
By Theorems \ref{0913} and \ref{0914}, we immediately obtain $(\Lambda_D-\Lambda_0)f\in H^{s-1}(\pa B)$ for all $f\in H^s(\pa B)$.
Furthermore, Remark \ref{0915} (i) implies {\color{xxx}$(\Lambda_D-\Lambda_0)f\in H^{m-3/2}(\pa B)$ for all $f\in H^s(\pa B)$ and $m\in\Z_+$.}
\end{remark}

{\color{hgh}Below we will consider numerical approaches for computing $\Lambda_0$ and $\Lambda_D$.
The same scheme can be applied to {\color{xxx}handle} $\Lambda_{\Omega}$ for any Lipschitz domain $\Omega$ satisfying {\color{xxx}$\ov{\Om}\subset B$}.}
We begin with the necessary parametrization of \eqref{0913-5} and \eqref{0913-4}.
{\color{xxx}Note that a uniform mesh for $\pa D$ yields only poor convergence due to the corners of polygon $D$.}
To take proper care of the corner singularities, we may use a graded mesh as shown in (\ref{1109-1}) below (for details see \cite[Section 3.6]{CK19} and \cite{QZZ}).
Assume that $\pa B$ possesses a regular analytic $2\pi$-periodic representation of the form
\be\label{1110-5}
\tilde x(\tilde t)=(\tilde x_1(\tilde t),\tilde x_2(\tilde t)),\quad0\leq\tilde t\leq2\pi.
\en
Moreover, the boundary $\pa D$ of polygon $D$ with corners given in order by $\{P_\ell:=(P_{\ell,1},P_{\ell,2})\}_{\ell=1}^N$ possesses a parametric representation of the form $x(t):=(x_1(t),x_2(t))$ for $0\leq t\leq2\pi$, where
\be\label{1113}
{\color{xxx}x_j(t)\!=\!\left(\!\frac{2\ell\pi}N\!-\!t\!\right)\!\!P_{\ell,j}\!+\!\left(\!t\!-\!\frac{2(\ell\!-\!1)\pi}N\!\right)\!\!P_{(\ell\,{\rm mod}\,N)+1,j},\;t\!\in\!\left[\!\frac{2(\ell\!-\!1)\pi}N,\frac{2\ell\pi}N\!\right),\ell\!=\!1,\!\cdots\!,N}
\en
for $j=1,2$.
To introduce the uniform mesh on $\pa B$ and the graded mesh on $\pa D$, we choose $\tilde n,n\in\Z_+$ such that $n/N\in\Z_+$.
For simplicity let there be $n/N$ knots on each smooth segment.
The knots on $\pa B$ and $\pa D$ are $\{\tilde x(\tilde t_j)\}$ and $\{x(t_j)\}$, respectively, where
\be\no
&&\tilde t_j=\frac{j\pi}{\tilde n},\;j=0,1,\cdots,2\tilde n-1,\\ \label{1109-1}
&&t_j=w(s_j)\text{ with }s_j=\frac{\pi}{2n}\!+\!\frac{j\pi}{n},\;j=0,1,\cdots,2n-1,\\ \no
&&w(s)=\tilde w(Ns-2\ell\pi+2\pi),\;
{\color{xxx}s\in\left[\frac{2(\ell-1)\pi}{N},\frac{2\ell\pi}{N}\right)},\ell=1,\cdots,N,\\ \no
&&\tilde w(s)=2\pi\frac{[v(s)]^p}{[v(s)]^p+[v(2\pi-s)]^p},\quad 0\leq s\leq2\pi,\\ \no
&&v(s)=\left(\frac1p-\frac12\right)\left(\frac{\pi-s}\pi\right)^3+\frac1p\frac{s-\pi}\pi+\frac12,\quad p\geq2.
\en
Following the Nystr\"om method (see \cite[Chapter 12]{Kress14}) and the idea of graded mesh (see \cite[Section 3.6]{CK19}), the boundary integral equations (\ref{0913-5}) and (\ref{0913-4}) can be approximated by
\ben
(L_{\pa B}-I)\Psi_{\pa B}=F
\enn
and
\be\label{1110-1}
\left(\begin{array}{cc}
L_{\pa B}-I & M_{\pa D\ra\pa B}\\
L_{\pa B\ra\pa D} & M_{\pa D}
\end{array}\right)\left(\begin{array}{c}
\Psi_{\pa B}\\
W\Psi_{\pa D}
\end{array}\right)=\left(\begin{array}{c}
F\\
0
\end{array}\right),
\en
respectively, where $I$ denotes the identity matrix and\
\ben
L_{\pa B}&:=&\left(\frac{\pi}{\tilde n}\widetilde L_{\pa B}(\tilde t_i,\tilde t_j)\right)_{i,j=0,1,\cdots,2\tilde n-1},\\
\widetilde L_{\pa B}(\tilde t,\tilde\tau)&:=&\begin{cases}
\frac1\pi\frac{[\tilde x(\tilde t)-\tilde x(\tilde\tau)]\cdot(\tilde x'_2(\tilde\tau),-\tilde x'_1(\tilde\tau))}{|\tilde x(\tilde t)-\tilde x(\tilde\tau)|^2}, & \tilde t\neq\tilde\tau,\\
\frac{\tilde x''(\tilde t)\cdot(\tilde x'_2(\tilde t),-\tilde x'_1(\tilde t))}{2\pi|\tilde x'(\tilde t)|^2}, & \tilde t=\tilde\tau,
\end{cases}\\
\Psi_{\pa B}&:=&\left(\varphi_{\pa B}(\tilde x(\tilde t_0)),\varphi_{\pa B}(\tilde x(\tilde t_1)),\cdots,\varphi_{\pa B}(\tilde x(\tilde t_{2\tilde n-1}))\right)^\top,\\
F&:=&2\left(f(\tilde x(\tilde t_0)),f(\tilde x(\tilde t_1)),\cdots,f(\tilde x(\tilde t_{2\tilde n-1}))\right)^\top,
\enn
and
\ben
M_{\pa D\ra\pa B}&:=&\left(\frac{\pi}n\widetilde M_{\pa D\ra\pa B}(\tilde t_i,\tau_j)\right)_{i=0,1,\cdots,2\tilde n-1,j=0,1,\cdots,2n-1},\\
\widetilde M_{\pa D\ra\pa B}(\tilde t,\tau)&:=&\frac{|x'(\tau)|}\pi\ln\frac1{|\tilde x(\tilde t)-x(\tau)|},\\
L_{\pa B\ra\pa D}&:=&\left(\frac{\pi}{\tilde n}\widetilde L_{\pa B\ra\pa D}(t_i,\tilde t_j)\right)_{i=0,1,\cdots,2n-1,j=0,1,\cdots,2\tilde n-1},\\
\widetilde L_{\pa B\ra\pa D}(t,\tilde\tau)&:=&\frac1\pi\frac{[x(t)-\tilde x(\tilde\tau)]\cdot(\tilde x'_2(\tilde\tau),-\tilde x'_1(\tilde\tau))}{|x(t)-\tilde x(\tilde\tau)|^2},\\
M_{\pa D}&:=&\left(R_j^{(n)}(s_i)\widetilde M_{\pa D,1}(s_i,\sigma_j)+\frac{\pi}n\widetilde M_{\pa D,2}(s_i,\sigma_j)\right)_{i=0,1,\cdots,2n-1},\\
R_j^{(n)}(s)&:=&-\frac{2\pi}n\sum_{m=1}^{n-1}\frac 1m\cos m(s-s_j)-\frac\pi{n^2}\cos n(s-s_j),\quad j=0,1,\cdots,2n-1,\\
\widetilde M_{\pa D,1}(s,\sigma)&:=&M_{\pa D,1}(w(s),w(\sigma)),\\
\widetilde M_{\pa D,2}(s,\sigma)&:=&\begin{cases}
M_{\pa D,2}(w(s),w(\sigma)), & s\neq\sigma,\\
2M_{\pa D,1}(w(s),w(s))\ln w'(s)+M_{\pa D,2}(w(s),w(s)), & s=\sigma,
\end{cases}\\
M_{\pa D,1}(t,\tau))&:=&-\frac1{2\pi}|x'(\tau)|,\\
M_{\pa D,2}(t,\tau))&:=&\begin{cases}
\frac{|x'(\tau)|}\pi\ln\frac1{|x(t)-x(\tau)|}-M_{\pa D,1}(t,\tau)\ln\left(4\sin^2\frac{t-\tau}2\right), & t\neq\tau,\\
\frac{|x'(t)|}\pi\ln\frac1{|x'(t)|}, & t=\tau,
\end{cases}\\
W&:=&{\rm diag}(w'(s_0),w'(s_1),\cdots,w'(s_{2n-1})),\\
\Psi_{\pa D}&:=&\left(\varphi_{\pa D}(x(t_0)),\varphi_{\pa D}(x(t_1)),\cdots,\varphi_{\pa D}(x(t_{2n-1}))\right)^\top.
\enn
With the notations in discrete form, the solution to \eqref{5}--\eqref{6} can be approximated by
\ben
u_0(x)\approx\frac\pi{\tilde n}\left(\begin{array}{cccc}
\frac{\pa\Phi_0(x,\tilde x(\tilde t_0))}{\pa\nu(\tilde x(\tilde t_0))} & \frac{\pa\Phi_0(x,\tilde x(\tilde t_1))}{\pa\nu(\tilde x(\tilde t_1))} & \cdots & \frac{\pa\Phi_0(x,\tilde x(\tilde t_{2\tilde n-1}))}{\pa\nu(\tilde x(\tilde t_{2\tilde n-1}))}
\end{array}\right)Y_{\pa B}\Psi_{\pa B},\quad x\in B,
\enn
and the solution to \eqref{1'}--\eqref{3'} can be approximated by
\ben
u(x)&\approx&\frac\pi{\tilde n}\left(\begin{array}{cccc}
\frac{\pa\Phi_0(x,\tilde x(\tilde t_0))}{\pa\nu(\tilde x(\tilde t_0))} & \frac{\pa\Phi_0(x,\tilde x(\tilde t_1))}{\pa\nu(\tilde x(\tilde t_1))} & \cdots & \frac{\pa\Phi_0(x,\tilde x(\tilde t_{2\tilde n-1}))}{\pa\nu(\tilde x(\tilde t_{2\tilde n-1}))}
\end{array}\right)Y_{\pa B}\Psi_{\pa B}\\
&&+\frac\pi n\left(\begin{array}{cccc}
\Phi_0(x,x(t_0)) & \Phi_0(x,x(t_1)) & \cdots & \Phi_0(x,x(t_{2n-1}))
\end{array}\right)Y_{\pa D}W\Psi_{\pa D},\quad x\in B\ba\ov{D},
\enn
where
\be\no
Y_{\pa B}&=&{\rm diag}(|\tilde x'(\tilde t_0)|,|\tilde x'(\tilde t_1)|,\cdots,|\tilde x'(\tilde t_{2\tilde n-1})|),\\ \label{0507-1}
Y_{\pa D}&=&{\rm diag}(|x'(t_0)|,|x'(t_1)|,\cdots,|x'(t_{2n-1})|).
\en
\begin{remark}\label{rem0507}
Since $w'(s_j)$ takes a very small value if the knot $x(w(s_j))$ is close to the corners of $\pa D$, it is not stable to calculate $\Psi_{\pa D}$ from \eqref{1110-1}.
From the above approximation for the solution $u$ to \eqref{1'}--\eqref{3'}, we see $W\Psi_{\pa D}$ can be viewed as an unknown vector and it is sufficient to calculate $W\Psi_{\pa D}$ from \eqref{1110-1}.
\end{remark}
In view of Remark \ref{rem0507}, the Dirichlet-to-Neumann operators $\Lambda_0$ and $\Lambda_D$ are approximated by
\ben
\Lambda_0\approx Y_{\pa B}^{-1}T_{\pa B}\Psi_{\pa B}=Y_{\pa B}^{-1}T_{\pa B}(L_{\pa B}-I)^{-1}\\
\enn
and
\be\no
\Lambda_Df&\approx&\frac12Y_{\pa B}^{-1}\left(\begin{array}{cc}
T_{\pa B} & H_{\pa D\ra\pa B}
\end{array}\right)\left(\begin{array}{c}
\Psi_{\pa B}\\
W\Psi_{\pa D}
\end{array}\right)\\ \label{0415-5}
&=&Y_{\pa B}^{-1}\left(\begin{array}{cc}
T_{\pa B} & H_{\pa D\ra\pa B}
\end{array}\right)\left(\begin{array}{cc}
L_{\pa B}-I & M_{\pa D\ra\pa B}\\
L_{\pa B\ra\pa D} & M_{\pa D}
\end{array}\right)^{-1}\left(\begin{array}{c}
F/2\\
0
\end{array}\right)
\en
respectively, where
\ben
T_{\pa B}&:=&\left(\widetilde T_{\pa B}(\tilde t_i,\tilde t_j)\right)_{i,j=0,1,\cdots,2\tilde n-1},\\
\widetilde T_{\pa B}(\tilde t_i,\tilde t_j)&:=&T_j^{(\tilde n)}(\tilde t_i)+\frac{\pi}{\tilde n}\left[k_A(\tilde t_i,\tilde t_j)+k_2(\tilde t_i,\tilde t_j)-\frac1{2\pi}\right],\\
T_j^{(\tilde n)}(\tilde t)&:=&-\frac1{\tilde n}\sum_{m=1}^{\tilde n-1}m\cos m(\tilde t-\tilde t_j)-\frac12\cos\tilde n(\tilde t-\tilde t_j),\\
k_A(\tilde t,\tilde \tau)&:=&\begin{cases}
\frac1{2\pi}\frac{2(1-\cos(\tilde t-\tilde \tau))\tilde x'(\tilde t)\cdot\tilde x'(\tilde \tau)-\cos(\tilde t-\tilde \tau)|\tilde x(\tilde t)-\tilde x(\tilde \tau)|^2}{(1-\cos(\tilde t-\tilde \tau))|\tilde x(\tilde t)-\tilde x(\tilde \tau)|^2}, & \tilde t\neq\tilde \tau,\\
\frac1\pi\frac{\frac16\tilde x'(\tilde t)\cdot\tilde x'''(\tilde t)-\frac14|\tilde x''(\tilde t)|^2+\frac5{12}|\tilde x'(\tilde t)|^2}{|\tilde x'(\tilde t)|^2}, & \tilde t=\tilde \tau,
\end{cases}\\
k_2(\tilde t,\tilde \tau)&:=&\begin{cases}
-\frac2\pi\frac{\{[\tilde x(\tilde t)-\tilde x(\tilde \tau)]\cdot(\tilde x'_2(\tilde t),-\tilde x'_1(\tilde t))\}\{[\tilde x(\tilde t)-\tilde x(\tilde \tau)]\cdot(\tilde x'_2(\tilde\tau),-\tilde x'_1(\tilde\tau))\}}{|\tilde x(\tilde t)-\tilde x(\tilde \tau)|^4}, & \tilde t\neq\tilde \tau,\\
\frac1{2\pi}\frac{|\tilde x''(\tilde t)\cdot(\tilde x'_2(\tilde t),-\tilde x'_1(\tilde t))|^2}{|\tilde x'(\tilde t)|^4}, & \tilde t=\tilde \tau,\\
\end{cases}\\
H_{\pa D\ra\pa B}&:=&\left(\frac{\pi}n\widetilde H_{\pa D\ra\pa B}(\tilde t_i,\tau_j)\right)_{i=0,1,\cdots,2\tilde n-1,j=0,1,\cdots,2n-1},\\
H_{\pa D\ra\pa B}(\tilde t,\tau)&:=&\frac1\pi\frac{[x(\tau)-\tilde x(\tilde t)]\cdot(\tilde x'_2(\tilde t),-\tilde x'_1(\tilde t))}{|\tilde x(\tilde t)-x(\tau)|^2}|x'(\tau)|.
\enn
For the calculation of the hypersingular operator $T_{0,\pa B}$ we refer to \cite{Rath,Kress95}.

\section{Iteration method for a precise reconstruction}\label{sec_Newton}
\setcounter{equation}{0}

In numerical implementation, it is impossible to calculate $I_2(\Om)$ defined by (\ref{I2}) for all $\Om\subset\mathcal O$ with $\mathcal O$ introduced  in Remark \ref{rem1029-2}.
For a more precise result of recovering the shape of $D$, we will use Newton's iteration method with an appropriate initial guess based on the result of the method introduced in Section \ref{s3}.
We refer to \cite{BC2002,Kirsch93} for the details on iteration method in the case of scattering problem.

To introduce the iteration method, we consider the polygon $D$ with boundary $\pa D:=\{h(t)\in\R^2:t\in[0,2\pi)\}$, where $h$ is a $2\pi$-periodic function analogous to (\ref{1113}).
For any fixed boundary value $f\in H^s(\pa B)$ with $s\in[-\frac12,\frac12]$, we define the mapping $\mathcal F$ by
\be\label{1030-1}
\mathcal Fh=\pa_\nu u\quad\text{on }\pa B,
\en
where $u$ solves (\ref{1'})--(\ref{3'}).
By Remark \ref{0413-1} (iv), we know $\mathcal Fh\in H^{s-1}(\pa B)$.
The inverse problem is to solve (\ref{1030-1}), while the iteration method is to approximately solve (\ref{1030-1}) in a {\color{xxx}Newton type} iterative manner.
Precisely, for a proper initial guess {\color{xxx}$h=h_0$} we compute
\ben
h_{n+1}=h_n+q_n,\quad n=0,1,\cdots,
\enn
where $q_n$ solves the linearized equation of (\ref{1030-1}):
\be\label{1030-2}
\mathcal Fh_{n}+\mathcal F'_{h_n}q_{n}=\pa_\nu u\quad\text{on }\pa B.
\en
The domain derivative in (\ref{1030-2}) is defined by
\ben
\mathcal F'_hq=\lim_{t\ra0}\frac{\mathcal F(h+tq)-\mathcal F(h)}t.
\enn
Analogously to \cite[(28), (39) and Theorem 5.1]{BC2002}, the domain derivative is given by
\be\label{Frechet}
\mathcal F'_hq=\pa_\nu v|_{\pa B},
\en
where $v$ solves the following boundary value problem
\be\label{0415-2}
\Delta v=0 & \text{in }B\ba\ov{D},\\ \label{0415-3}
v=0 & \text{on }\pa B,\\ \label{0415-4}
v=-(\nu\cdot q)\pa_\nu u & \text{on }\pa D.
\en
In \cite[Corollary 4.2]{BC2002}, the existence of the shape derivative was justified for the Helmholtz equation in a proper function space near each corner point of $\partial D$. The arguments there can be easily adapted to the boundary value problem of  Laplace equation under consideration.

Due to the regularity of elliptic equations, {\color{hgh}the operator equation} (\ref{1030-2}) is ill-posed.
For a more stable numerical implementation, we may apply the Tikhonov regularization scheme to obtain
\be\label{211014-1}
q_n\approx(\alpha I+[\mathcal F'_{h_n}]^*\mathcal F'_{h_n})^{-1}[\mathcal F'_{h_n}]^*(\pa_\nu u|_{\pa B}-\mathcal Fh_{n}),
\en
where the regularization parameter $\alpha>0$ is appropriately chosen {\color{xxx}(see \cite[Section 4.5]{CK19})}.

It is difficult to calculate the values of $\pa_\nu u$ near the corners of $\pa D$ due to the singularities of elliptic boundary value problems in nonsmooth domains (see Theorem \ref{thm2.1} and \cite{Grisvard}).
To deal with this difficulty, we approximate the values of $\pa_\nu u$ near the corners of $\pa D$ in the following manner.
In view of Theorem \ref{0914}, we can easily deduce that
\ben
\pa_\nu u&=&\left(\begin{array}{cc}
T_{0,\pa B\ra\pa D} & K'_{0,\pa D}-I
\end{array}\right)\left(\begin{array}{c}
\varphi_{\pa B}\\
\varphi_{\pa D}
\end{array}\right)\quad\text{on }\pa D.
\enn
Therefore, {\color{xxx}$2|x'(t)|\pa_\nu u(x(t))$} can be approximated by
\be\label{0415-1}
Y_{\pa D}U=\left(\begin{array}{cc}
T_{\pa B\ra\pa D} & H_{\pa D}W-Y_{\pa D}
\end{array}\right)\left(\begin{array}{c}
\Psi_{\pa B}\\
\Psi_{\pa D}
\end{array}\right),
\en
where $Y_{\pa D}$ is given in \eqref{0507-1} and
\ben
U&:=&2\left(\pa_\nu u(x(t_0)),\pa_\nu u(x(t_1)),\cdots,\pa_\nu u(x(t_{2n-1}))\right)^\top,\\
T_{\pa B\ra\pa D}&:=&\left(\frac{\pi}{\tilde n}\widetilde T_{\pa B\ra\pa D}(t_i,\tilde\tau_j)\right)_{i=0,1,\cdots,2n-1,j=0,1,\cdots,2\tilde n-1},\\
\widetilde T_{\pa B\ra\pa D}(t,\tilde\tau)&:=&-\frac2\pi\frac{\{[x(t)-\tilde x(\tilde \tau)]\cdot(x'_2(t),-x'_1(t))\}\{[x(t)-\tilde x(\tilde \tau)]\cdot(\tilde x'_2(\tilde \tau),-\tilde x'_1(\tilde \tau))\}}{|x(t)-\tilde x(\tilde \tau)|^4}\\
&&+\frac1\pi\frac{(x'_2(t),-x'_1(t))\cdot(\tilde x'_2(\tilde \tau),-\tilde x'_1(\tilde \tau))}{|x(t)-\tilde x(\tilde \tau)|^2},\\
H_{\pa D}&:=&\left(\frac{\pi}n\widetilde H_{\pa D}(t_i,\tau_j)\right)_{i,j=0,1,\cdots,2n-1},\\
\widetilde H_{\pa D}(t,\tau)&:=&\begin{cases}
\frac{|x'(\tau)|}\pi\frac{[x(\tau)-x(t)]\cdot(x'_2(t),-x'_1(t))}{|x(t)-x(\tau)|^2}, & t\neq\tau,\\
\frac{x''(t)\cdot(x'_2(t),-x'_1(t))}{2\pi|x'(t)|}, & t=\tau.
\end{cases}
\enn
As pointed out in Remark \ref{rem0507}, $W\Psi_{\pa D}$ is viewed as an unknown vector to avoid the calculation of the inverse of $W$.
{\color{xxx}Applying} $W$ on both sides of \eqref{0415-1}, we obtain
\ben
Y_{\pa D}WU=\left(\begin{array}{cc}
WT_{\pa B\ra\pa D} & WH_{\pa D}-Y_{\pa D}
\end{array}\right)\left(\begin{array}{c}
\Psi_{\pa B}\\
W\Psi_{\pa D}
\end{array}\right),
\enn
where we have used the equlity $WY_{\pa D}=Y_{\pa D}W$.
In view of \eqref{1110-1}, $2w'(s)|x'(w(s))|\pa_\nu u(x(w(s)))$ on $\pa D$ can be approximated by
\ben
{\color{xxx}Y_{\pa D}WU}&=&\left(\begin{array}{cc}
WT_{\pa B\ra\pa D} & WH_{\pa D}-Y_{\pa D}
\end{array}\right)\left(\begin{array}{cc}
L_{\pa B}-I & M_{\pa D\ra\pa B}\\
L_{\pa B\ra\pa D} & M_{\pa D}
\end{array}\right)^{-1}\left(\begin{array}{c}
F\\
0
\end{array}\right).
\enn
Therefore, we may approximate $\pa_\nu u$ on $\pa D$ by
\be\label{0419}
{\color{xxx}U\approx(\alpha_0I+W)^{-1}(WU)=(\alpha_0I+W)^{-1}Y_{\pa D}^{-1}(Y_{\pa D}WU),}
\en
where the parameter $\alpha_0>0$ is appropriately chosen.

Analogously to Theorem \ref{0914}, we can easily deduce that
\ben
\pa_\nu v=\left(\begin{array}{cc}
T_{0,\pa B} & K'_{0,\pa D\ra\pa B}
\end{array}\right)\left(\begin{array}{cc}
K_{0,\pa B}-I & S_{0,\pa D\ra\pa B}\\
K_{0,\pa B\ra\pa D} & S_{0,\pa D}
\end{array}\right)^{-1}\left(\begin{array}{c}
0\\
-(\nu\cdot q)\pa_\nu u
\end{array}\right)\quad\text{on }\pa B,
\enn
where $v$ solves (\ref{0415-2})--(\ref{0415-4}).
The above equations can be approximated by
\ben
V=Y_{\pa B}^{-1}\left(\begin{array}{cc}
T_{\pa B} & H_{\pa D\ra\pa B}
\end{array}\right)\left(\begin{array}{cc}
L_{\pa B}-I & M_{\pa D\ra\pa B}\\
L_{\pa B\ra\pa D} & M_{\pa D}
\end{array}\right)^{-1}\left(\begin{array}{c}
0\\
-QU
\end{array}\right),
\enn
where $Y_{\pa B},T_{\pa B},H_{\pa D\ra\pa B},L_{\pa B},M_{\pa D\ra\pa B},L_{\pa B\ra\pa D},M_{\pa D}$ are given in \eqref{0415-5} and
\ben
V&:=&\left(\pa_\nu v(x(t_0)),\pa_\nu v(x(t_0)),\cdots,\pa_\nu v(x(t_{2n-1}))\right)^\top,\\
Q&:=&{\rm diag}\left(\nu(x(t_0))\cdot q(t_0),\nu(x(t_1))\cdot q(t_1),\cdots,\nu(x(t_{2n-1}))\cdot q(t_{2n-1})\right).
\enn
In view of \eqref{Frechet}, the domain derivative {\color{xxx}$\mathcal F'_hq=\pa_\nu v|_{\pa B}$} is thus approximated by $V$.

Noting that this paper focuses on polygon obstacles, we update the location of corners $\{P_\ell:\ell=1,\cdots,N\}$ of the polygon in (\ref{1113}) at each iteration step, instead of the coefficients of basis shape functions such as trigonometric polynomials in \cite{Kirsch93}.
Precisely, in the $m$-th iteration step the updated corners $\{P_\ell^{(m)}=(P_{\ell,1}^{(m)},P_{\ell,2}^{(m)}):{\color{xxx}\ell=1},\cdots,N\}$ are given by
\ben
\left(\begin{array}{c}
P_{1,1}^{(m)}\\
\vdots\\
P_{N,1}^{(m)}\\
P_{1,2}^{(m)}\\
\vdots\\
P_{N,2}^{(m)}
\end{array}\right)=\left(\begin{array}{c}
P_{1,1}^{(m-1)}\\
\vdots\\
P_{N,1}^{(m-1)}\\
P_{1,2}^{(m-1)}\\
\vdots\\
P_{N,2}^{(m-1)}
\end{array}\right)+\left(\begin{array}{c}
\Delta P_{1,1}^{(m)}\\
\vdots\\
\Delta P_{N,1}^{(m)}\\
\Delta P_{1,2}^{(m)}\\
\vdots\\
\Delta P_{N,2}^{(m)}
\end{array}\right),\quad m=1,2,\cdots,
\enn
where $\{P_\ell^{(m)}=(P_{\ell,1}^{(m)},P_{\ell,2}^{(m)}):\ell=1,\cdots,N\}$ are the corners in the $(m-1)$-th iteration step, and $\{\Delta P_\ell^{(m)}=(\Delta P_{\ell,1}^{(m)},\Delta P_{\ell,2}^{(m)}):\ell=1,\cdots,N\}$ are given by \eqref{211014-1} {\color{xxx}in terms of} \eqref{1113}.

\section{Numerical examples}\label{s4}
\setcounter{equation}{0}

In this section, we will display some numerical examples.

\begin{example}[Factorization method based on Dirichlet-to-Neumann operators]\label{FMDtN}
Let $B$ be a disk centered at the origin with radius $5$ (i.e., $\tilde x(\tilde t)=5(\cos\tilde t,\sin\tilde t)$ in (\ref{1110-5})).
The Dirichlet-to-Neumann operator $\Lambda_D$ is approximated by a $128\times128$ matrix.
The numerical results of indicator function $I(z)$ defined by \eqref{0409-1} for different obstacles are shown in Figure \ref{exFMDtN}, {\color{xxx}where the red solid line and black solid line representing the disk $B$ and the true obstacle $D$.}
The obstacle $D$ in Figure \ref{exFMDtN} (a) is a disk centered at $(2,3)$ with radius $0.5$.
The boundary $\pa D$ of obstacle $D$ in Figure \ref{exFMDtN} (b) is given by $x(t)=0.5(\cos t+0.65\cos 2t-0.65,1.5\sin t)+(2,1)$ for $0\leq t\leq2\pi$.
The obstacle $D$ in Figure \ref{exFMDtN} (c) is a polygon with corners given in order by $(0.25,-0.75)$, $(1.5,-0.5)$, $(1.5,0.5)$ and $(0.5,0.5)$.
\begin{figure}[!htbp]
  \centering
  \subfigure[]{
  \includegraphics[width=0.3\textwidth]{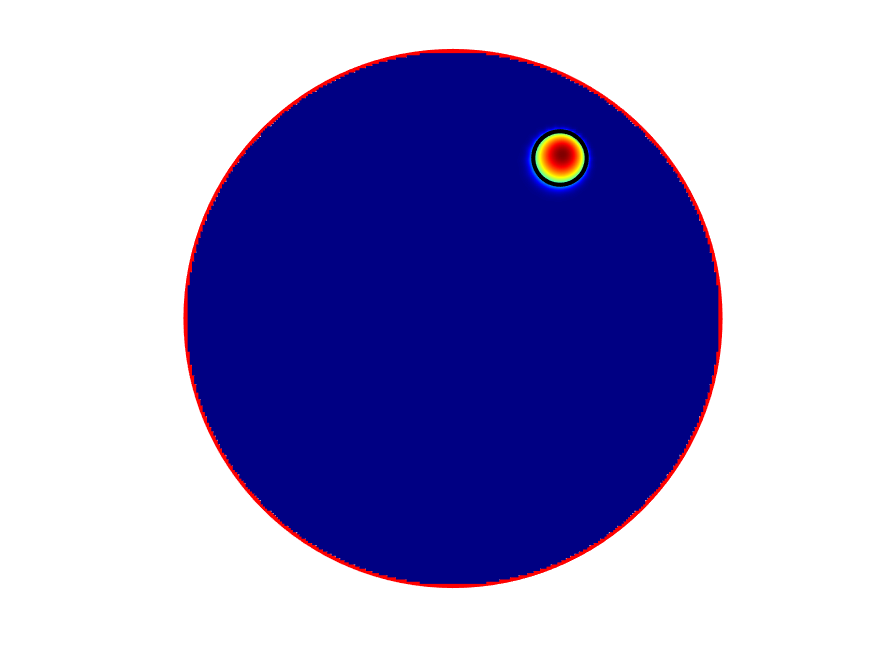}}
  \subfigure[]{
  \includegraphics[width=0.3\textwidth]{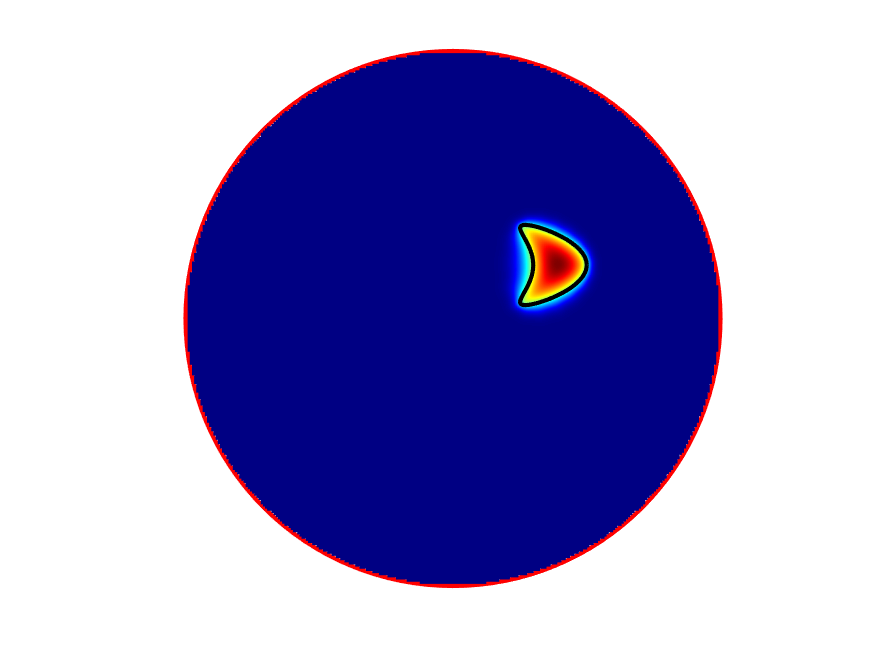}}
  \subfigure[]{
  \includegraphics[width=0.3\textwidth]{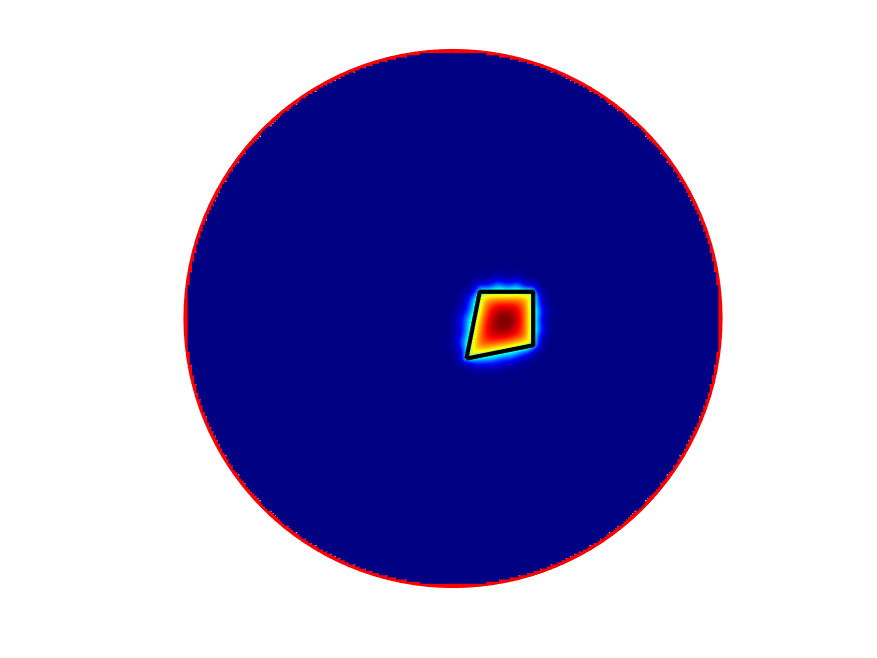}}
  \caption{Numerical results for Example \ref{FMDtN}.}\label{exFMDtN}
\end{figure}
\end{example}

\begin{example}[Determination of $\gamma$]\label{e1}
Let $B$ be a disk centered at the origin with radius $5$ and $D$ be a polygon with corners given in order by $(0.25,-0.75)$, $(1.5,-0.5)$, $(1.5,0.5)$ and $(0.5,0.5)$.
We set $\Om$ to be a disk centered at the origin with radius $3$.
The number of knots on $\pa B$ is set to be $64$, i.e., the Dirichlet-to-Neumann operator $\Lambda_D$ is approximated by a $64\times64$ matrix.
The noisy single pair of Cauchy data is given by $(f,\pa_\nu u_\delta)$ with $\pa_\nu u_\delta(x):=\pa_\nu u(x)(1+\delta\zeta)$.
Here, $\delta>0$ is the noise ratio and $\zeta$ is a uniformly distributed random number in $[-1,1]$.
The sampling knots for $\tau$ is set to be $\tau_\ell=\ell/20$ for $\ell=0,1,\!\cdots\!,40$.
\begin{figure}[!htbp]
  \centering
  \subfigure[$\gamma=e-2,\delta=0\%$]{
  \includegraphics[width=0.18\textwidth]{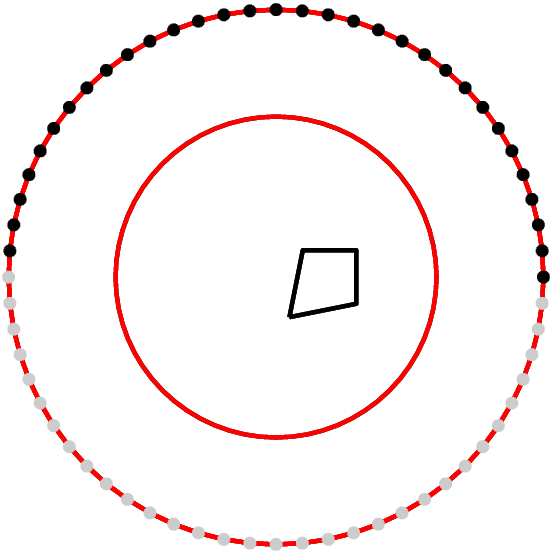}}
  \subfigure[$\gamma=e-2,\delta=1\%$]{
  \includegraphics[width=0.18\textwidth]{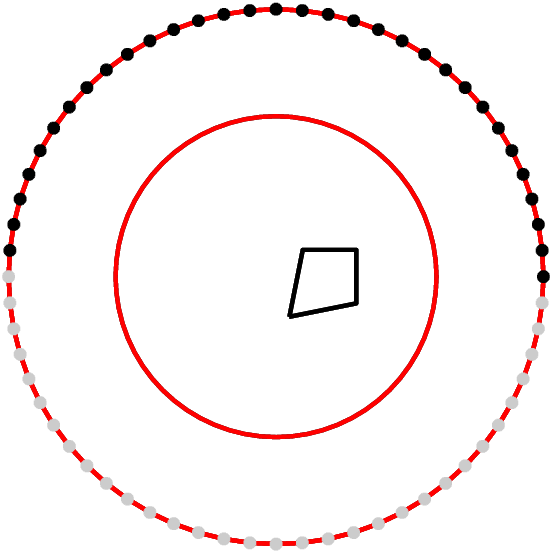}}
  \subfigure[$\gamma=1,\delta=1\%$]{
  \includegraphics[width=0.18\textwidth]{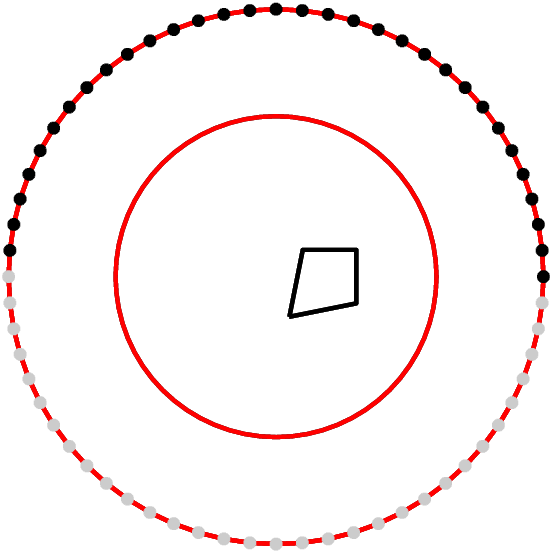}}
  \subfigure[$\gamma=\pi-2,\delta=1\%$]{
  \includegraphics[width=0.18\textwidth]{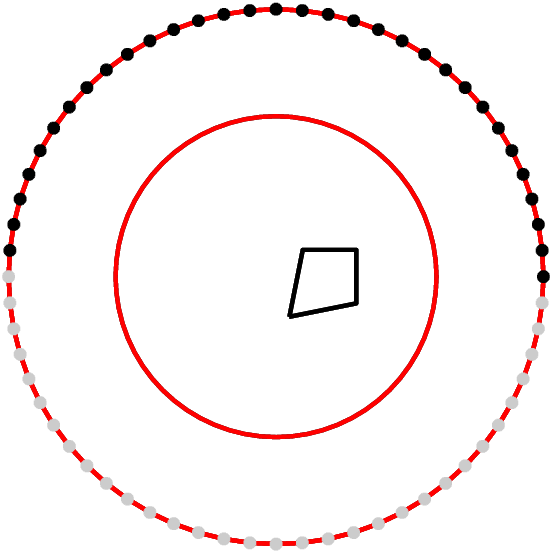}}
  \subfigure[$\gamma=\pi-2,\delta=0\%$]{
  \includegraphics[width=0.18\textwidth]{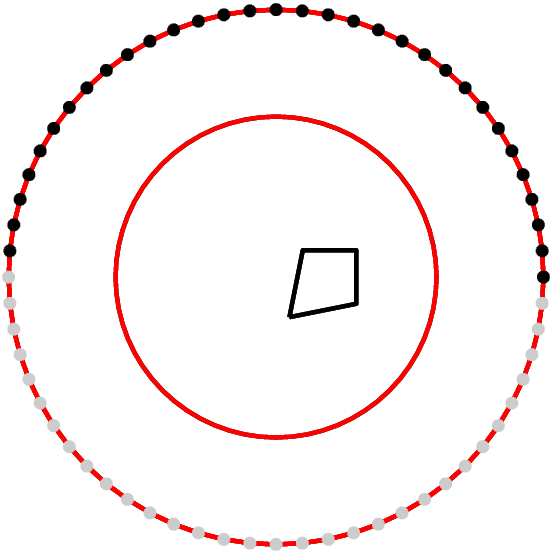}}\\
  \subfigure[$\gamma=e-2,\delta=0\%$]{
  \includegraphics[width=0.18\textwidth]{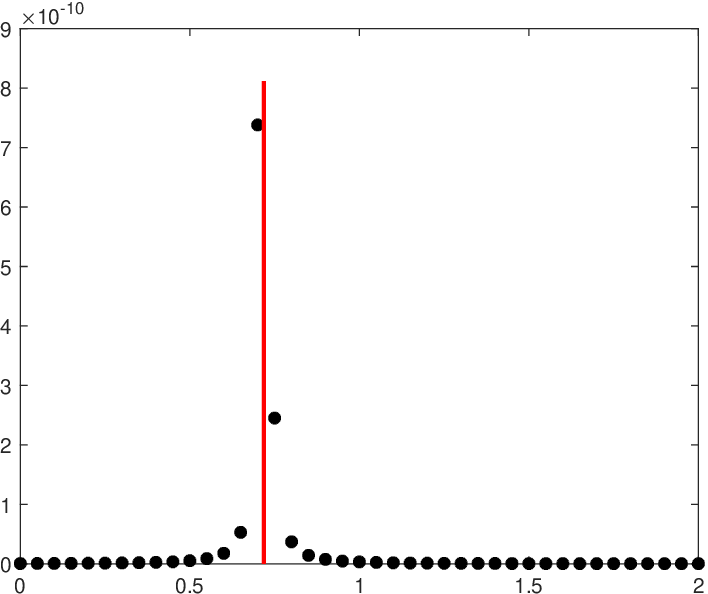}}
  \subfigure[$\gamma=e-2,\delta=1\%$]{
  \includegraphics[width=0.18\textwidth]{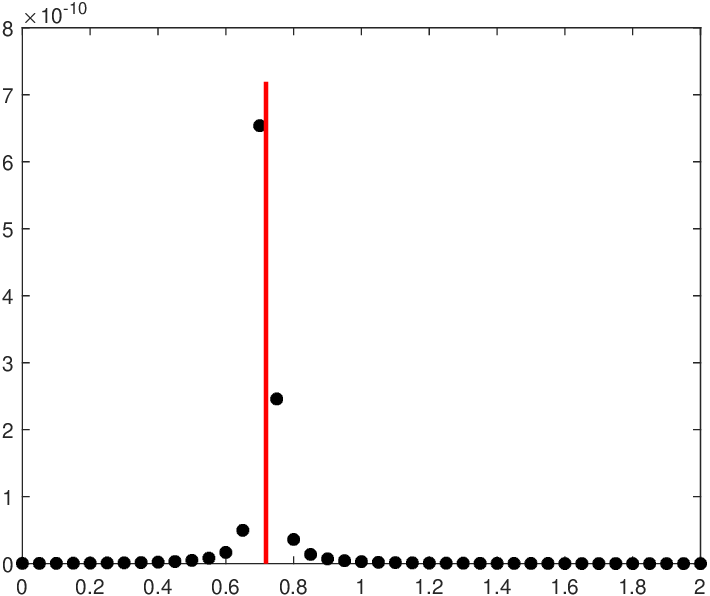}}
  \subfigure[$\gamma=1,\delta=1\%$]{
  \includegraphics[width=0.18\textwidth]{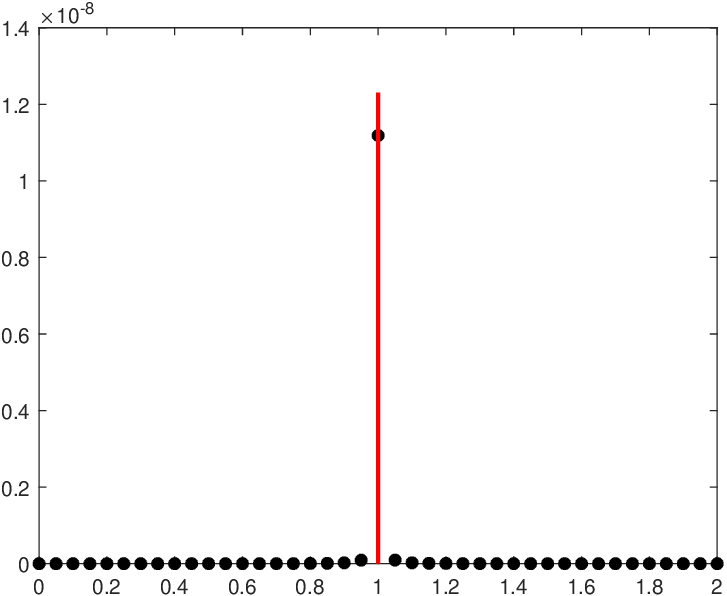}}
  \subfigure[$\gamma=\pi-2,\delta=1\%$]{
  \includegraphics[width=0.18\textwidth]{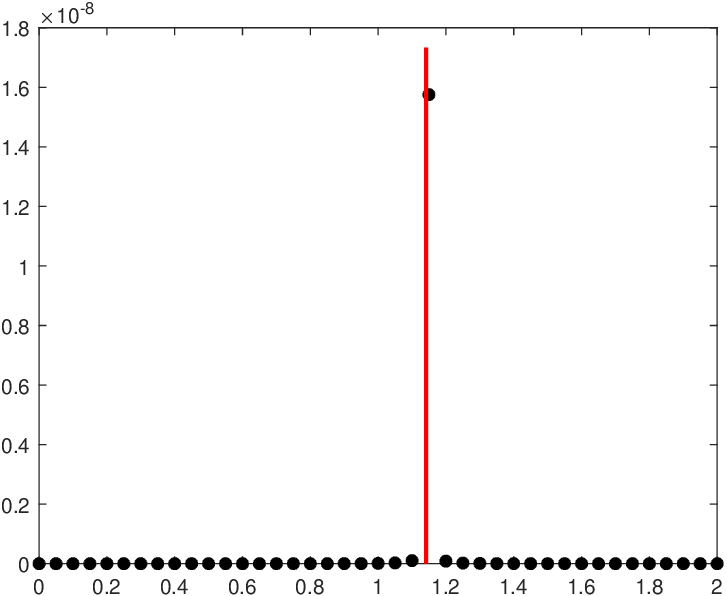}}
  \subfigure[$\gamma=\pi-2,\delta=0\%$]{
  \includegraphics[width=0.18\textwidth]{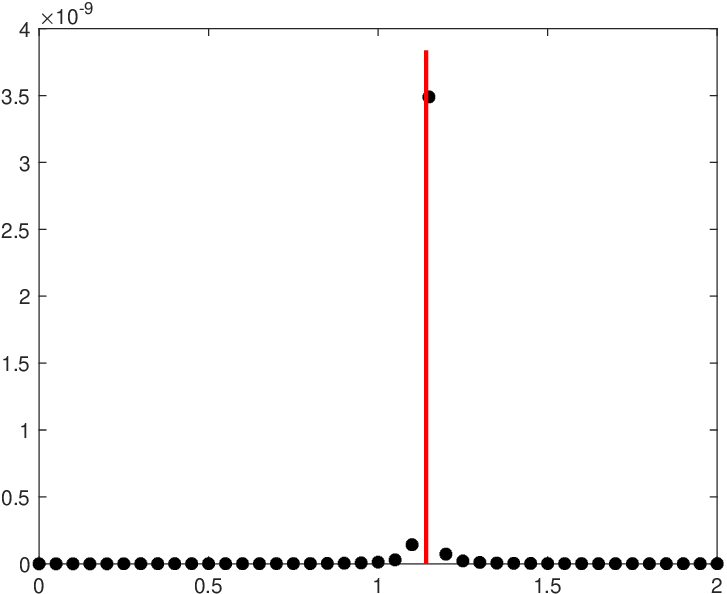}}
  \caption{Numerical results for Example \ref{e1} with different values of $\gamma$ and noise ratios.}\label{ex1}
\end{figure}
The numerical examples of $I_1(\tau)$ defined by (\ref{I1}) for different {\color{xxx} conductivity coefficients $\gamma$ and noise ratios $\delta$} with the same boundary value $f$ are shown in Figure \ref{ex1}.
The geometries of the problems are shown in Figure \ref{ex1} (a)--(e), where the black line represents $\pa D$, the small red circle represents $\pa\Omega$, and the large red circle represents $\pa B$. The boundary value $f$ takes value $0$ and $1$ at grey and black knots on $\pa B$, respectively.
The numerical results for $I_1(\tau)$ corresponding to Figure \ref{ex1} (a)--(e) are shown in Figure \ref{ex1} (f)--(j), respectively, where the red line represents the true value of $\gamma$ and black knots represent the points $(\tau_\ell,I_1(\tau_\ell))$, $\ell=0,1,\cdots,40$.
The numerical examples of $I_1(\tau)$ defined by (\ref{I1}) for different boundary values $f$ {\color{xxx}and noise ratios $\delta$} with the same {\color{xxx}conductivity coefficient} $\gamma=1$ are shown in Figure \ref{ex1b}.
The geometries of the problems are shown in Figure \ref{ex1b} (a)--(e), where the black line represents $\pa D$, the small red circle represents $\pa\Omega$, and the large red circle represents $\pa B$. The boundary value $f$ takes value $0,1,2$ at grey, black, blue knots on $\pa B$ in Figure \ref{ex1b} (a)--(c), respectively. The boundary value of Figure \ref{ex1b} (d), (e), (i) and (j) is {\color{xxx}given by} $f(x)=f(5(\cos\theta,\sin\theta))=\cos\theta$ on $\pa B$.
The numerical results for $I_1(\tau)$ corresponding to Figure \ref{ex1b} (a)--(e) are shown in Figure \ref{ex1b} (f)--(j), respectively, where the red line represents the true value of $\gamma$ and black knots represent the points $(\tau_\ell,I_1(\tau_\ell))$, $\ell=0,1,\cdots,40$.
\begin{figure}[!htbp]
  \centering
  \subfigure[$\delta=1\%$]{
  \includegraphics[width=0.18\textwidth]{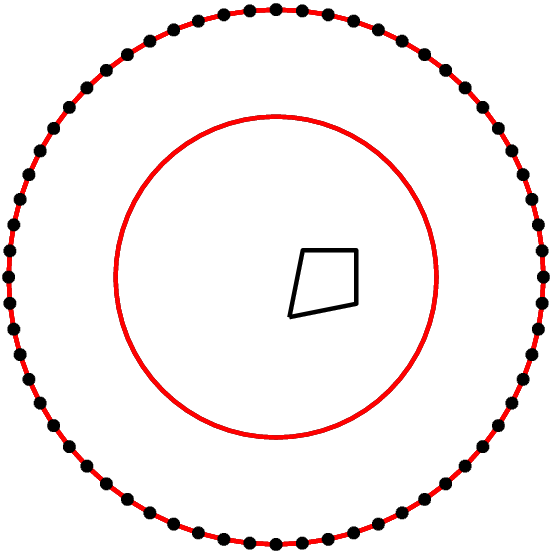}}
  \subfigure[$\delta=1\%$]{
  \includegraphics[width=0.18\textwidth]{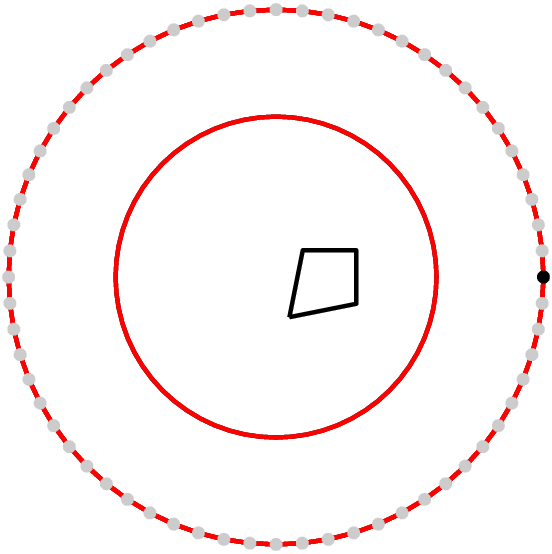}}
  \subfigure[$\delta=1\%$]{
  \includegraphics[width=0.18\textwidth]{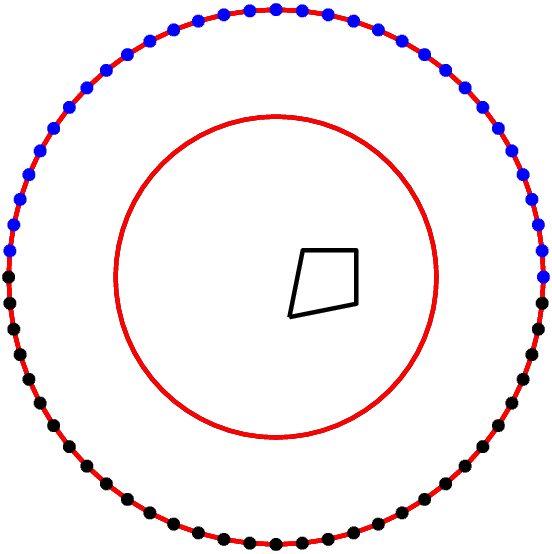}}
  \subfigure[$\delta=1\%$]{
  \includegraphics[width=0.18\textwidth]{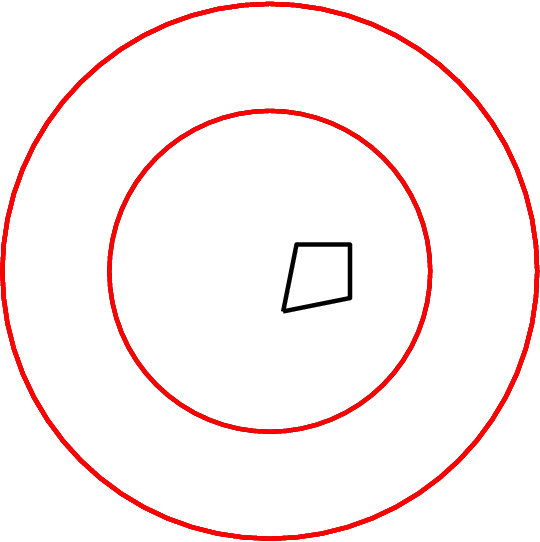}}
  \subfigure[$\delta=0\%$]{
  \includegraphics[width=0.18\textwidth]{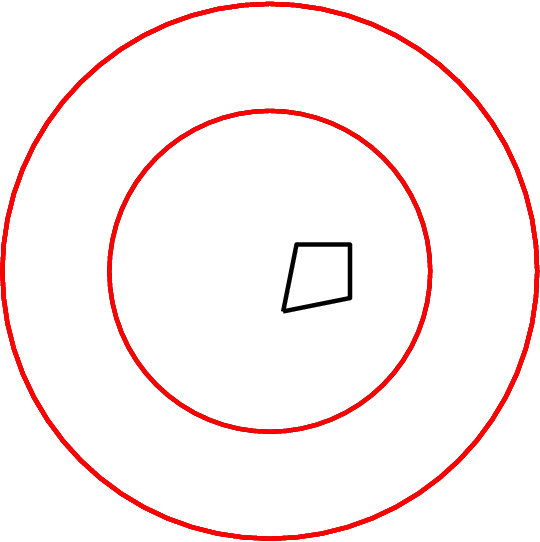}}\\
  \subfigure[$\delta=1\%$]{
  \includegraphics[width=0.18\textwidth]{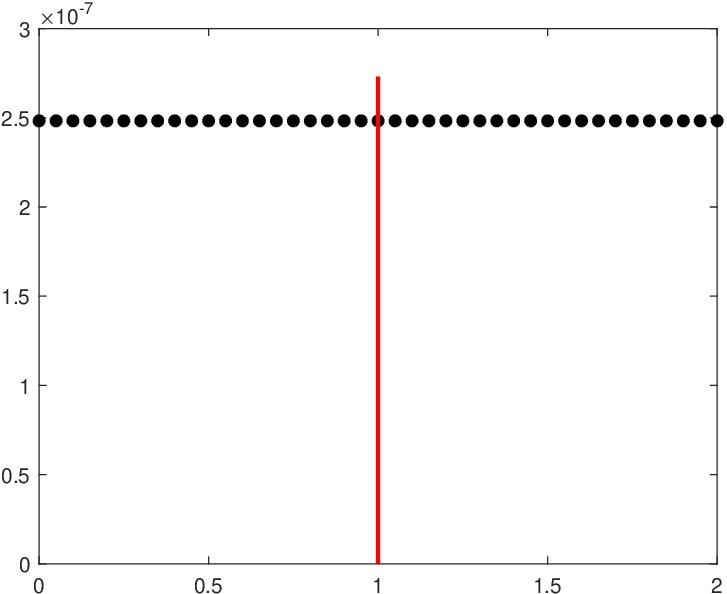}}
  \subfigure[$\delta=1\%$]{
  \includegraphics[width=0.18\textwidth]{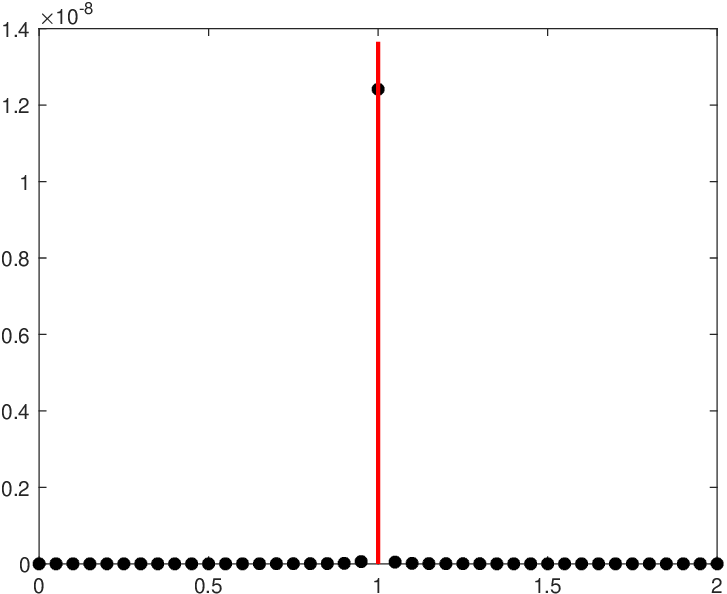}}
  \subfigure[$\delta=1\%$]{
  \includegraphics[width=0.18\textwidth]{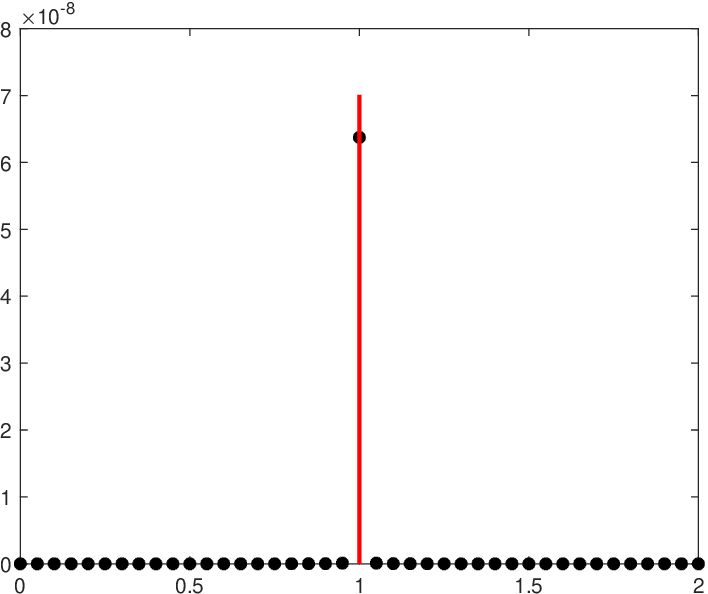}}
  \subfigure[$\delta=1\%$]{
  \includegraphics[width=0.18\textwidth]{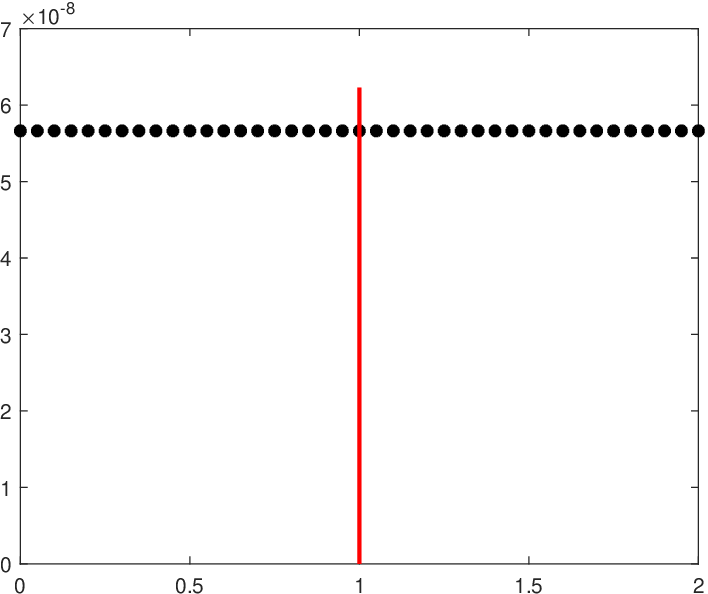}}
  \subfigure[$\delta=0\%$]{
  \includegraphics[width=0.18\textwidth]{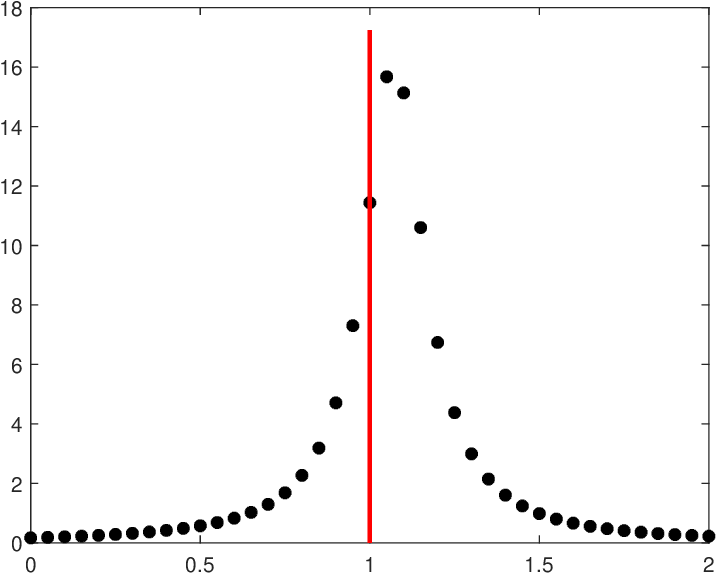}}
  \caption{Numerical results for Example \ref{e1} with different boundary values $f$ and noise ratios.}\label{ex1b}
\end{figure}
\end{example}

\begin{example}[Detection of location]\label{e2}
Let $D$ and $B$ be the same as in Example \ref{e1} and suppose that the {\color{xxx}conductivity coefficient $\gamma=1$} is known.
The boundary data $f$ and $\Lambda_Df$ are approximated by two $128\times1$ vectors.
We determine the location of $D$ in the following two different approaches.

\textbf{Approach 1.} Let $\Om_P^{(\ell)}$ be a disk centered at $P$ with radius $\ell/10$ for $\ell\!=\!5,6,\!\cdots\!,30$.
The number of knots on $\pa B$ is set to be $128$, i.e., the Dirichlet-to-Neumann operator $\Lambda_D$ is approximated by a $128\times128$ matrix.
To indicator the value of $I_2(\Om_P^{(\ell)})$ {\color{xxx}defined by} \eqref{I2}, we plot $\pa\Omega_P^{(\ell)}$ in the color given in terms of RGB values in $[0,1]\times[0,1]\times[0,1]$ as {\color{xxx}follows:}
\be\label{1029-3}
\begin{cases}
(V^{(\ell)},1-V^{(\ell)},0) & \text{if}\;V^{(\ell)}\ge0,\\
(0,1+V^{(\ell)},-V^{(\ell)}) & \text{if}\;V^{(\ell)}<0,
\end{cases}
\quad\text{ with }
V^{(\ell)}:=2\times\frac{I_2(\Om^{(\ell)}_P)-I_{min}}{I_{max}-I_{min}}-1,
\en
where $I_{max}:=\max_{\ell}\{I(\Om^{(\ell)})\}$ and $I_{min}:=\min_{\ell}\{I(\Om^{(\ell)})\}$.
The numerical results for $I_2(\Om_P^{(\ell)})$ defined by \eqref{I2} with different $P$ are shown in Figure \ref{ex2}, {\color{xxx}where the black line represents $\pa D$, the circle with knots represents $\pa B$, the boundary value $f$ takes value $0$ and $1$ at grey and black knots on $\pa B$, respectively, and the colored circles represent $\pa\Om_P^{(\ell)}$ for different $P$ and $\ell$ with its color indicating the value of $I_2(\Om_P^{(\ell)})$ in the sense of} (\ref{1029-3}).
From the numerical results shown in Figure \ref{ex2}, we see that the rough location of $D$ can be found from a single {\color{xxx}pair of} Cauchy data to (\ref{1})--(\ref{3}).
Moreover, we can imagine that Remark \ref{rem1029-2} can be numerically verified if the values of $I_2(\Om)$ for all possible domains $\Om$ in $\mathcal O$ are calculated.
\begin{figure}[!htbp]
  \centering
  \subfigure[$\Om_{(0,-1.5)}^{(\ell)}$]{
  \includegraphics[width=0.18\textwidth]{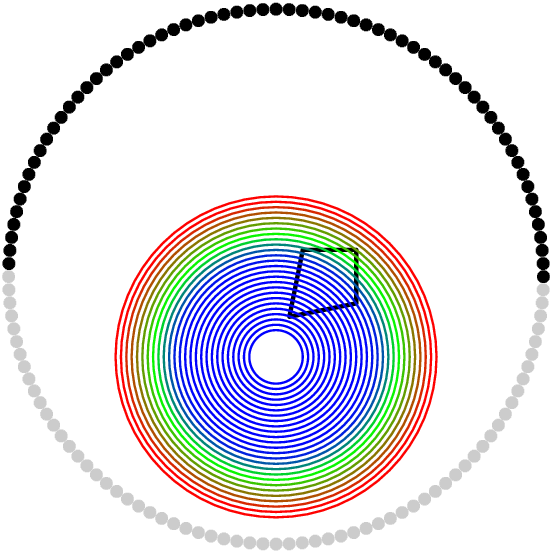}}
  \subfigure[$\Om_{(0,-0.5)}^{(\ell)}$]{
  \includegraphics[width=0.18\textwidth]{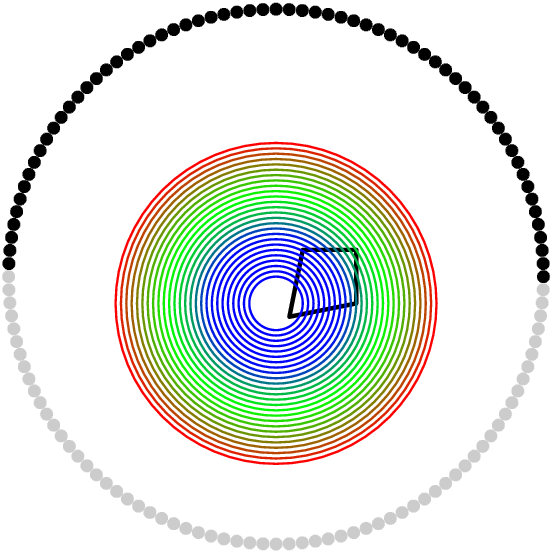}}
  \subfigure[$\Om_{(0,0)}^{(\ell)}$]{
  \includegraphics[width=0.18\textwidth]{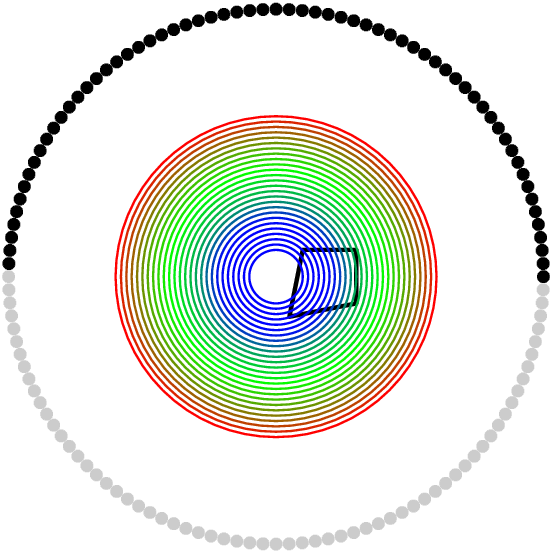}}
  \subfigure[$\Om_{(0,0.5)}^{(\ell)}$]{
  \includegraphics[width=0.18\textwidth]{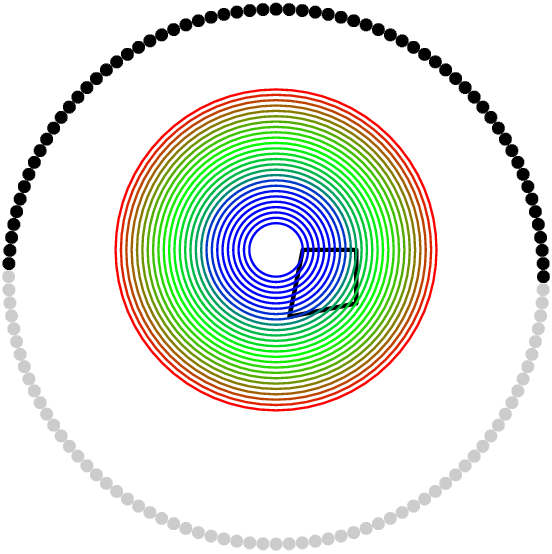}}
  \subfigure[$\Om_{(0,1.5)}^{(\ell)}$]{
  \includegraphics[width=0.18\textwidth]{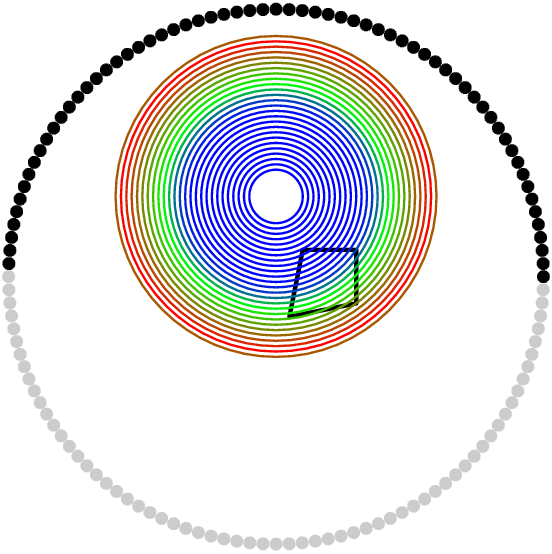}}
  \subfigure[$\Om_{(-1.5,0)}^{(\ell)}$]{
  \includegraphics[width=0.18\textwidth]{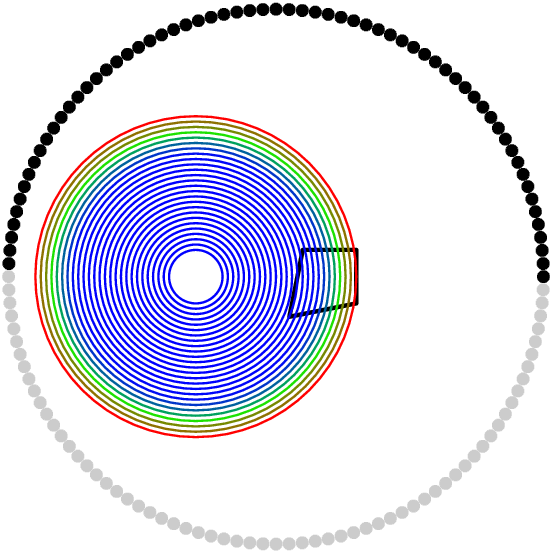}}
  \subfigure[$\Om_{(-0.5,0)}^{(\ell)}$]{
  \includegraphics[width=0.18\textwidth]{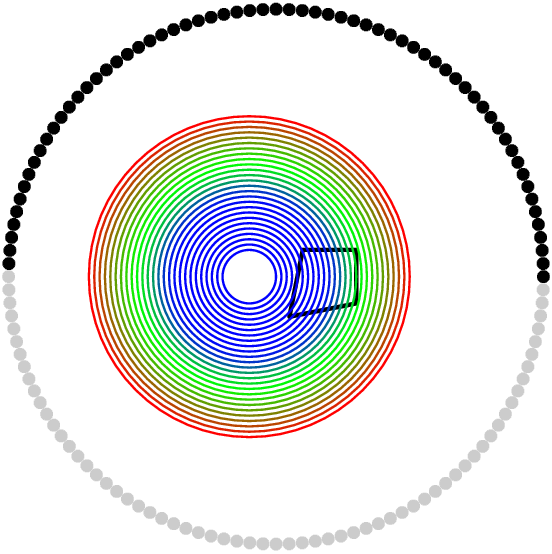}}
  \subfigure[$\Om_{(0,0)}^{(\ell)}$]{
  \includegraphics[width=0.18\textwidth]{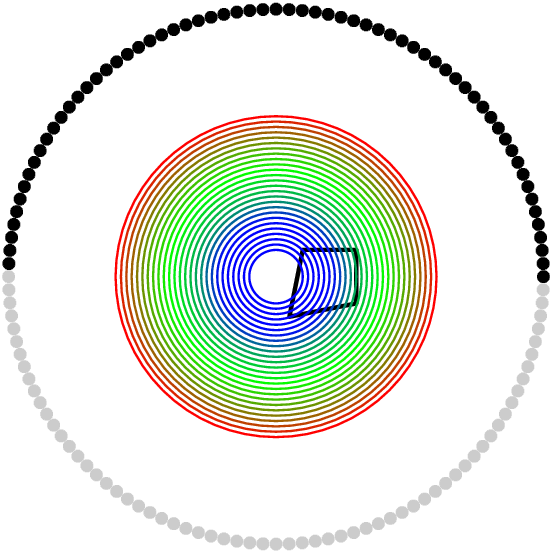}}
  \subfigure[$\Om_{(0.5,0)}^{(\ell)}$]{
  \includegraphics[width=0.18\textwidth]{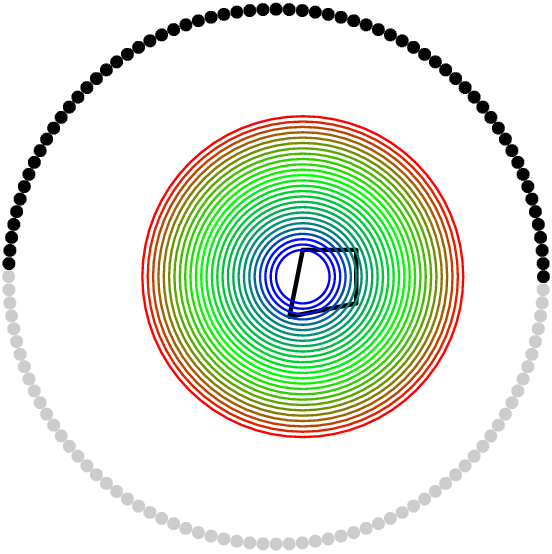}}
  \subfigure[$\Om_{(1.5,0)}^{(\ell)}$]{
  \includegraphics[width=0.18\textwidth]{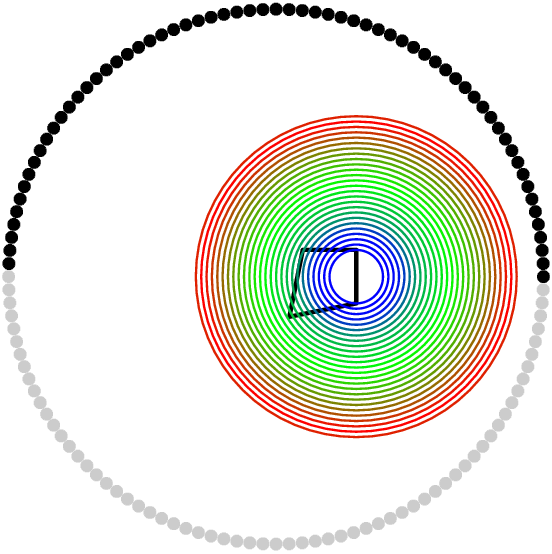}}
  \subfigure[$\Om_{(-1.5,0)}^{(\ell)}$]{
  \includegraphics[width=0.18\textwidth]{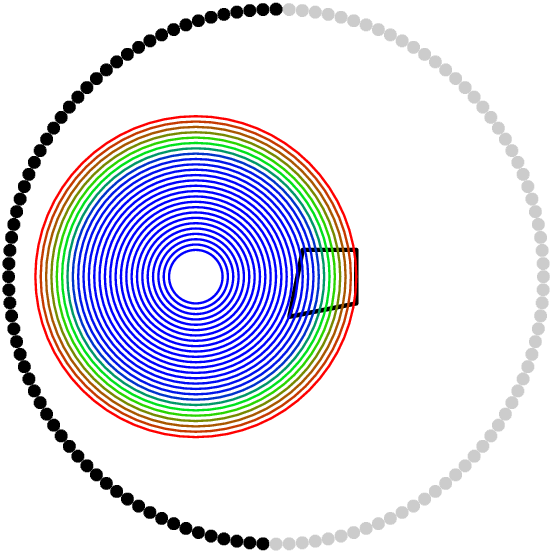}}
  \subfigure[$\Om_{(-0.5,0)}^{(\ell)}$]{
  \includegraphics[width=0.18\textwidth]{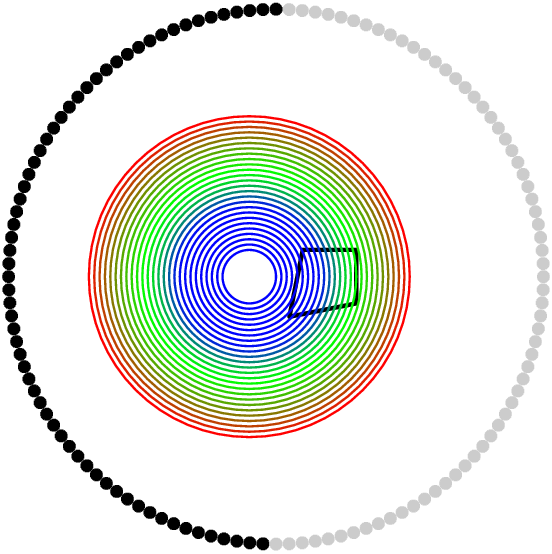}}
  \subfigure[$\Om_{(0,0)}^{(\ell)}$]{
  \includegraphics[width=0.18\textwidth]{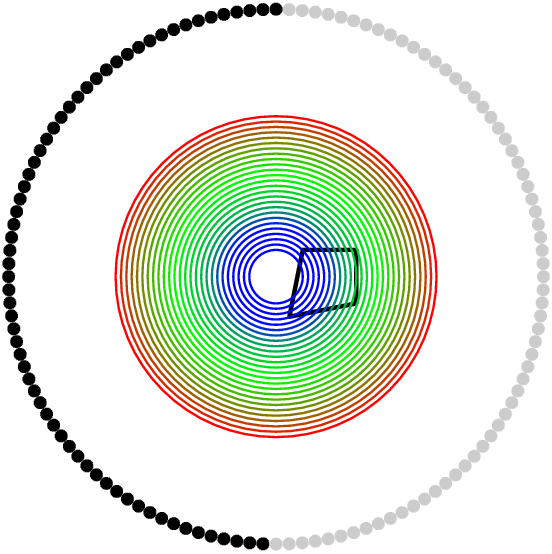}}
  \subfigure[$\Om_{(0.5,0)}^{(\ell)}$]{
  \includegraphics[width=0.18\textwidth]{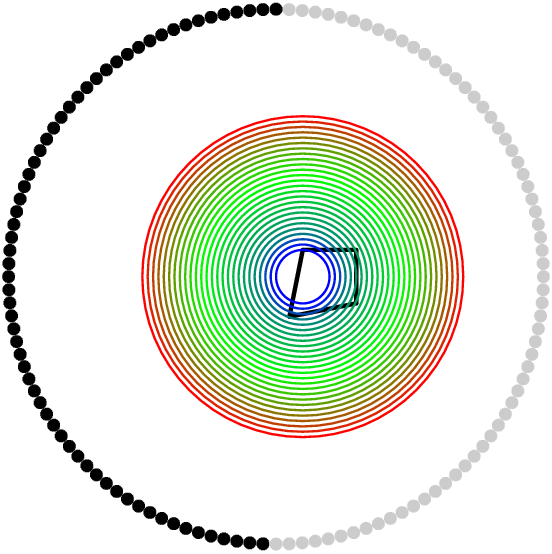}}
  \subfigure[$\Om_{(1.5,0)}^{(\ell)}$]{
  \includegraphics[width=0.18\textwidth]{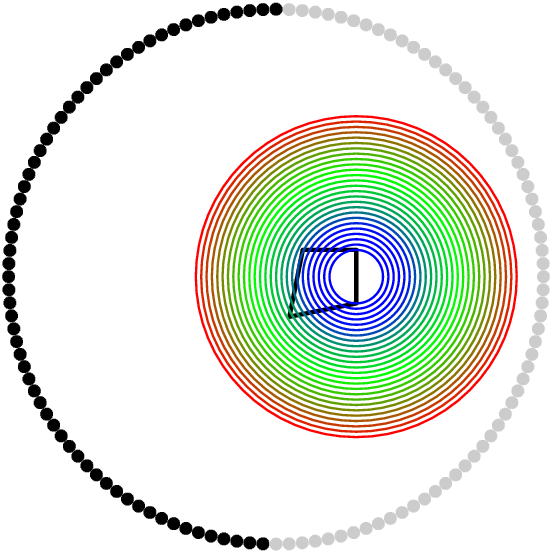}}
  \caption{Numerical results for Approach 1 of Example \ref{e2}.}\label{ex2}
\end{figure}

\textbf{Approach 2.} Let $r\in\{1,1/2,1/4,1/8\}$.
We set $\Om_{P}^{r}$ to be a disk centered at $P$ with radius $r$, where the center is set to be $P=P_{pq}=(-2+2pr,-2+2qr)$ for $p,q=0,1,\cdots,2/r$.
The numerical results for $I_2(\Om_{P_{pq}}^{r})$ defined by \eqref{I2} are shown in Figure \ref{ex3} (a)--(c), while the numerical results for $\ln(I_2(\Om_{P_{pq}}^{r}))$ are shown in Figure \ref{ex3} (d)--(e).
In Figure \ref{ex3}, {\color{xxx}the black line represents $\pa D$, the circle with knots represents $\pa B$, the boundary value $f$ takes value $0$ and $1$ at grey and black knots on $\pa B$, respectively.
The colored circles represent $\pa\Om_{P_{pq}}^{r}$ with its color indicating the value of $I_2(\Om_{P_{pq}}^{r})$ in the sense similar to} (\ref{1029-3}) in Figure \ref{ex3} (a)--(c), while the colored circles represent $\pa\Om_{P_{pq}}^{r}$ with its color indicating the value of $\ln(I_2(\Om_{P_{pq}}^{r}))$ in the sense similar to (\ref{1029-3}) in Figure \ref{ex3} (d)--(e).
\begin{figure}[!htbp]
  \centering
  \subfigure[$r=1$]{
  \includegraphics[width=0.18\textwidth]{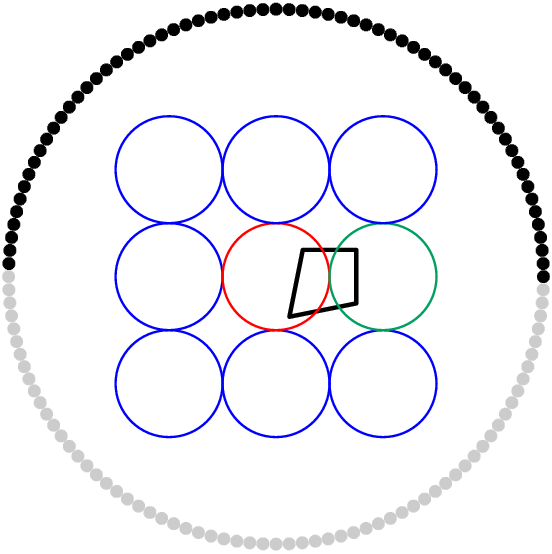}}
  \subfigure[$r=1/2$]{
  \includegraphics[width=0.18\textwidth]{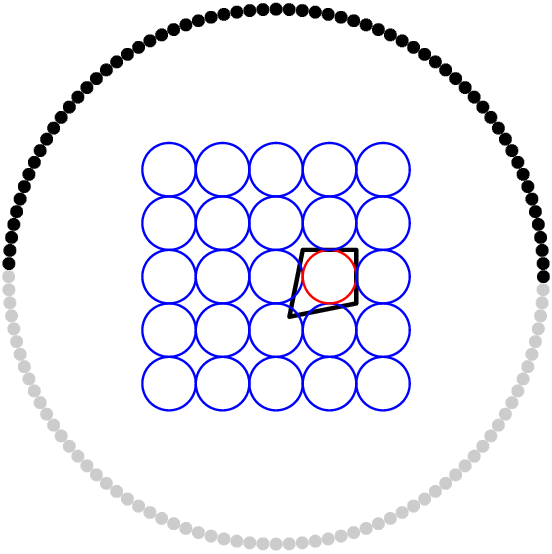}}
  \subfigure[$r=1/4$]{
  \includegraphics[width=0.18\textwidth]{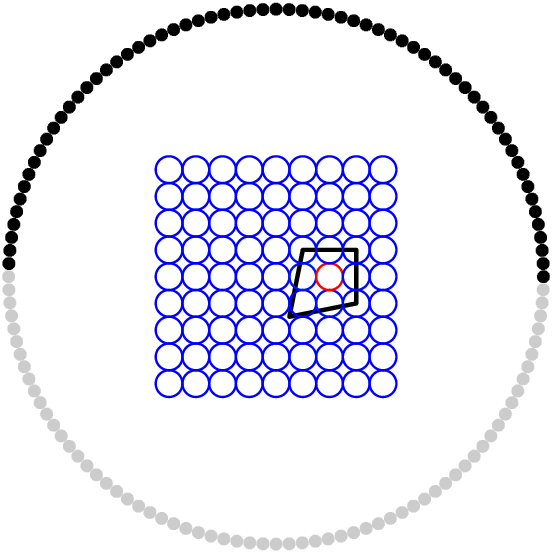}}
  \subfigure[$r=1/4$]{
  \includegraphics[width=0.18\textwidth]{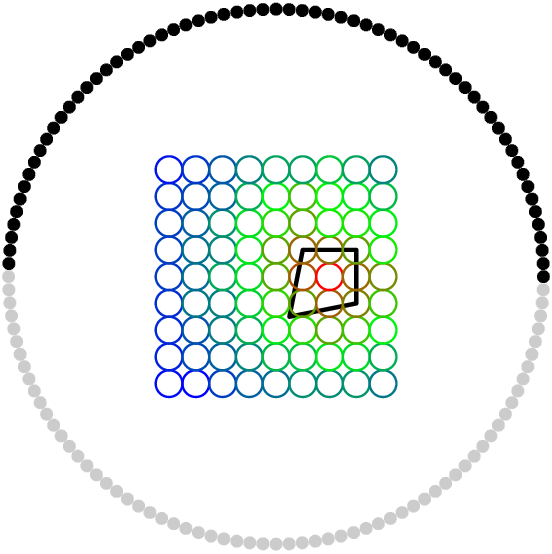}}
  \subfigure[$r=1/8$]{
  \includegraphics[width=0.18\textwidth]{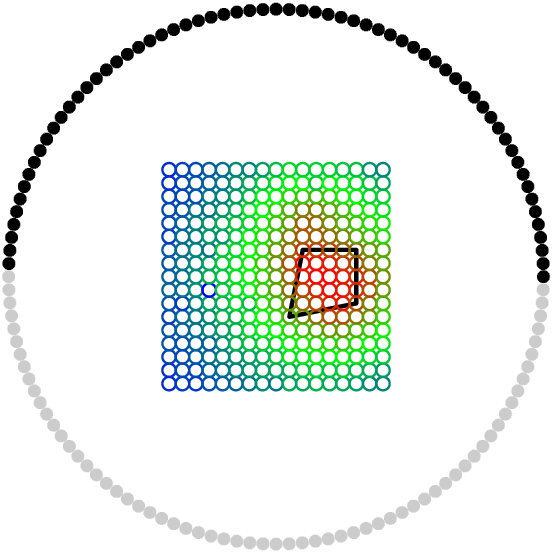}}
  \caption{Numerical results for Approach 2 of Example \ref{e2}.}\label{ex3}
\end{figure}

Note that $D$ is contained in none of the disks shown in Figure \ref{ex3}.
Nevertheless, it can be seen from Figure \ref{ex3} that the rough location of $D$ can be found in the way given by Approach 2 of Example \ref{e2}.
Intuitively, we think that in some sense the distance between the Cauchy data to $D$ and the gragh of the Dirichlet-to-Neumann operator to the disks close to $D$ may be nearer than the disks far away from $D$.
So far, we cannot explain why a rough location of $D$ can be identified {\color{xxx}by Approach 2 of} Example \ref{e2}.
\end{example}
\begin{example}[Iteration method]\label{e3}
{\color{xxx}Let $D$, $B$ and $\gamma$ be the same} as in Example \ref{e2}.
The boundary data $f$ and $\Lambda_Df$ are approximated by two $64\times1$ vectors.
\begin{figure}[!htbp]
  \centering
  \subfigure[$f=f_1$]{
  \includegraphics[width=0.23\textwidth]{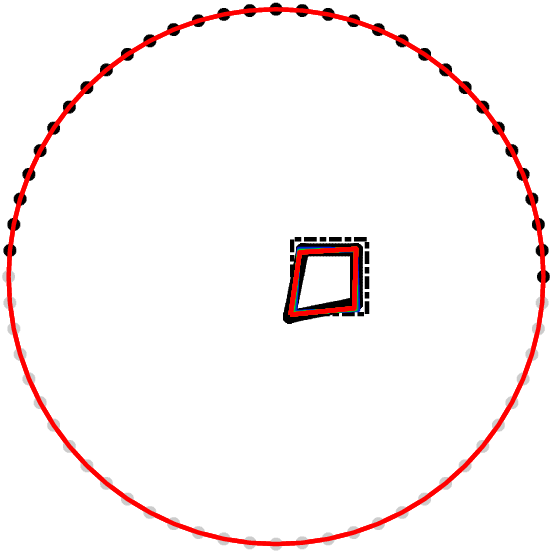}}
  \subfigure[$f=f_2$]{
  \includegraphics[width=0.23\textwidth]{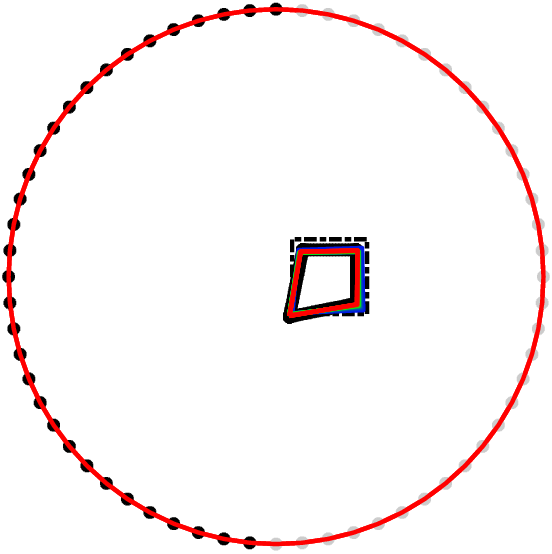}}
  \subfigure[$f=f_3$]{
  \includegraphics[width=0.23\textwidth]{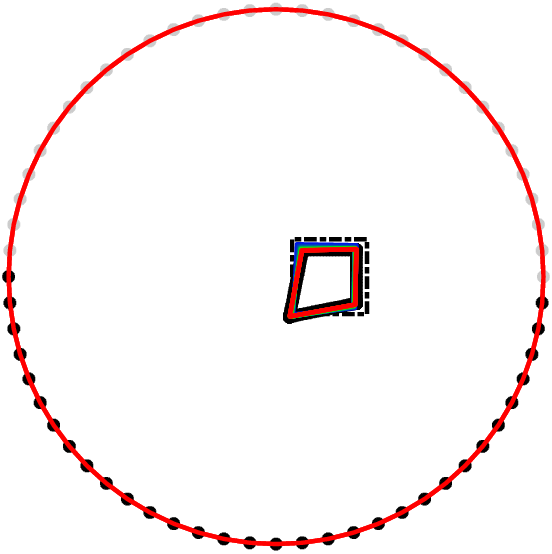}}
  \subfigure[$f=f_4$]{
  \includegraphics[width=0.23\textwidth]{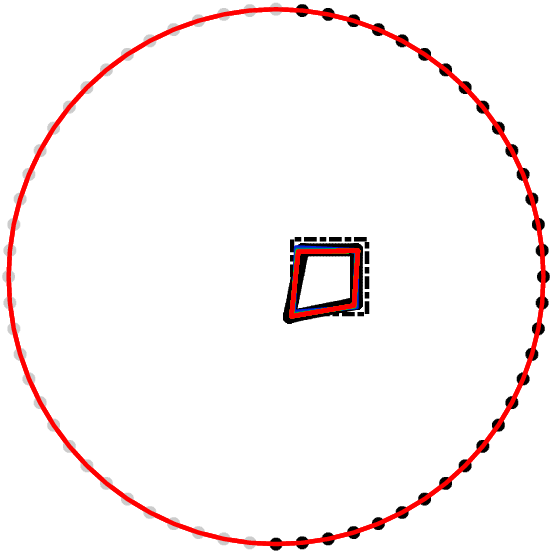}}
  \subfigure[$f=f_1$]{
  \includegraphics[width=0.23\textwidth]{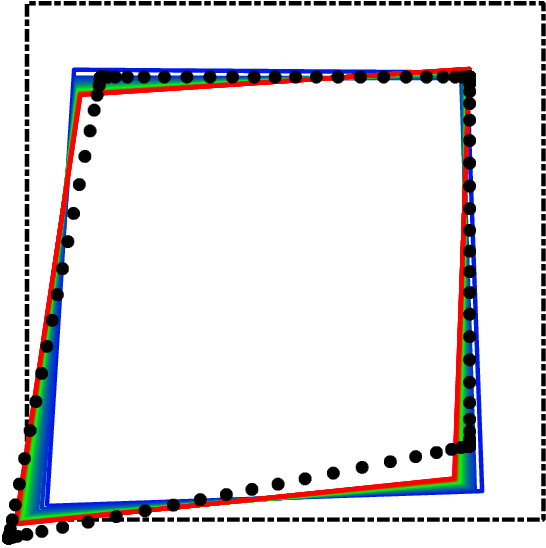}}
  \subfigure[$f=f_2$]{
  \includegraphics[width=0.23\textwidth]{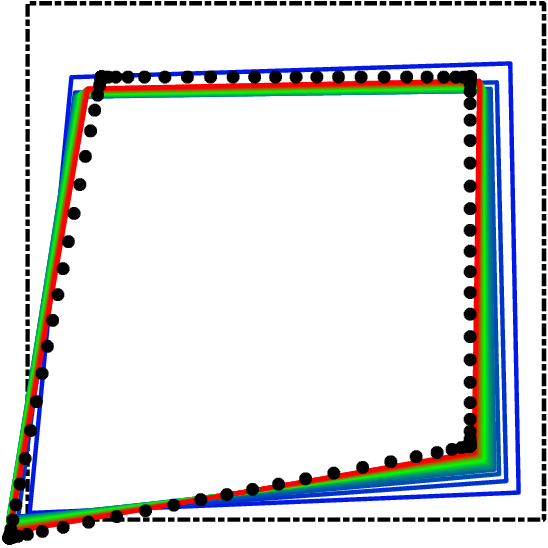}}
  \subfigure[$f=f_3$]{
  \includegraphics[width=0.23\textwidth]{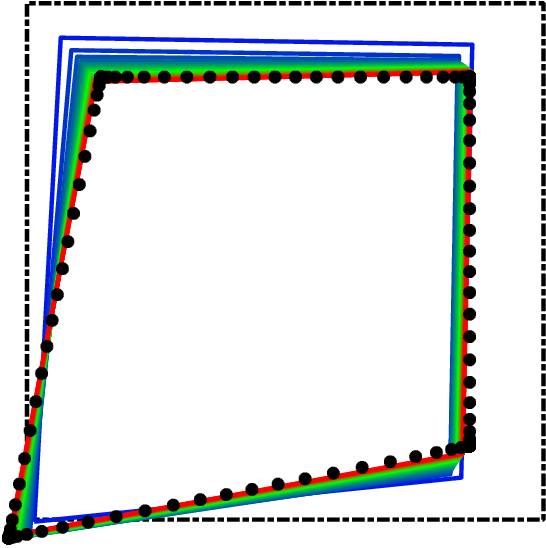}}
  \subfigure[$f=f_4$]{
  \includegraphics[width=0.23\textwidth]{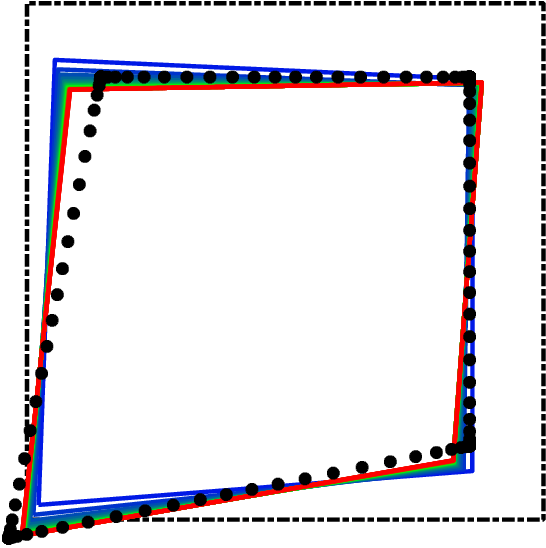}}
  \caption{Numerical results for Example \ref{e3} (i).}\label{iteration1}
\end{figure}

(i) The initial guess is given by the location of corners in order by $(0.3,-0.7)$, $(1.7,-0.7)$, $(1.7,0.7)$, $(0.3,0.7)$.
{\color{xxx}The total iteration number for each figure is $20$.}
The parameters in \eqref{211014-1} and \eqref{0419} are set to be $\alpha=10^{-3}$ and $\alpha_0=10^{-4}$, respectively.
{\color{xxx}The shape of the polygon} in the $m$-th iteration step will be plotted in the color given in terms of RGB values:
\be\label{1030-3}
\begin{cases}
(V^{(m)},1-V^{(m)},0) & \text{if}\;V^{(m)}\ge0,\\
(0,1+V^{(m)},-V^{(m)}) & \text{if}\;V^{(m)}<0,
\end{cases}
\quad\text{ with }
V^{(m)}:=\frac{2m}{\text{Total iteration number}}-1.
\en
The numerical results of iteration method based on a single pair of Cauchy data to different boundary values $f=f_j$, for $j=1,2,3,4$, on $\pa B$ are shown in Figure \ref{iteration1}, {\color{xxx}where the black dots represent $\pa D$, the red circle represents $\pa B$, the dashed line represents the initial guess, the boundary value $f=f_j$ takes value $0$ and $1$ at grey and black knots on $\pa B$, respectively.
Moreover, the colored lines represent the shape in each iteration step with its color indicating the current step number in the sense of} (\ref{1030-3}).

\begin{figure}[!htbp]
  \centering
  \subfigure[$5$ corners]{
  \includegraphics[width=0.23\textwidth]{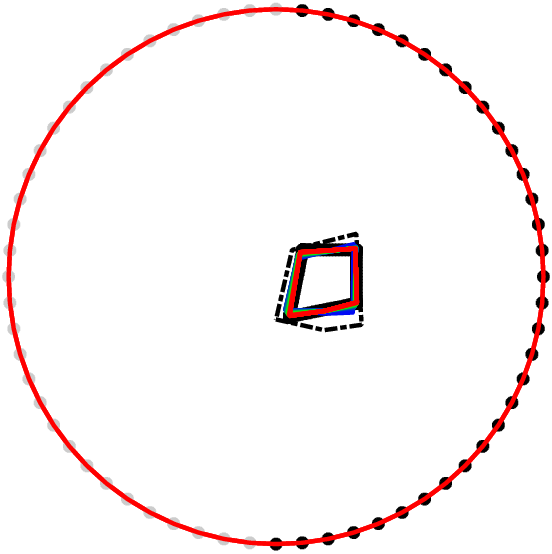}}
  \subfigure[$6$ corners]{
  \includegraphics[width=0.23\textwidth]{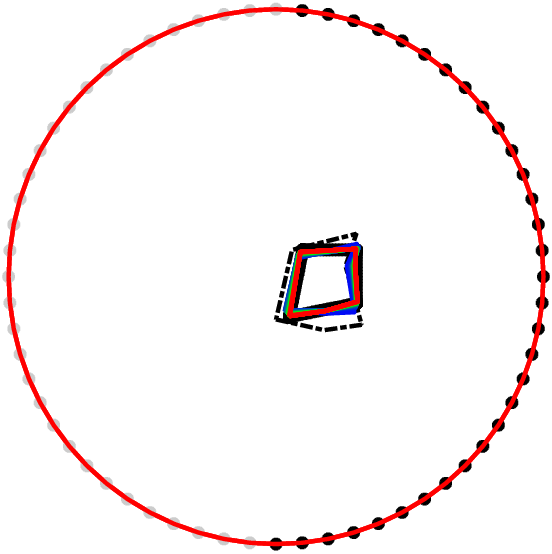}}
  \subfigure[$7$ corners]{
  \includegraphics[width=0.23\textwidth]{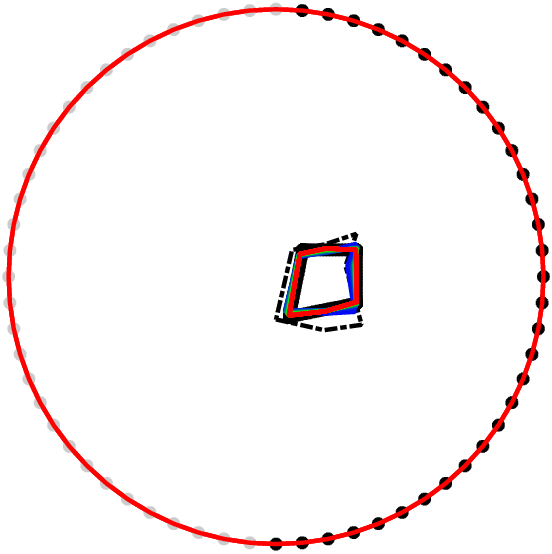}}
  \subfigure[$8$ corners]{
  \includegraphics[width=0.23\textwidth]{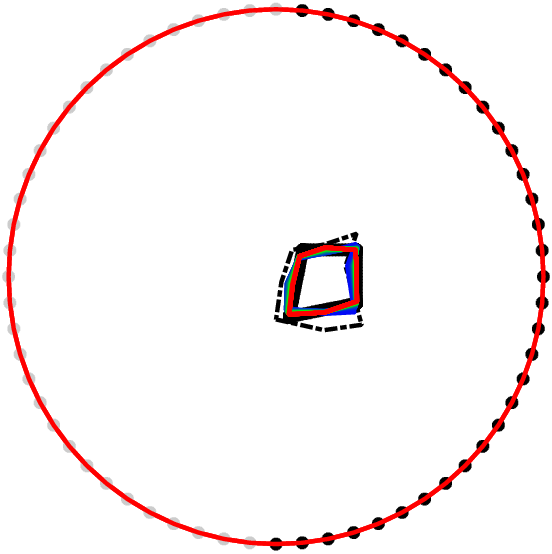}}
  \subfigure[$5$ corners]{
  \includegraphics[width=0.23\textwidth]{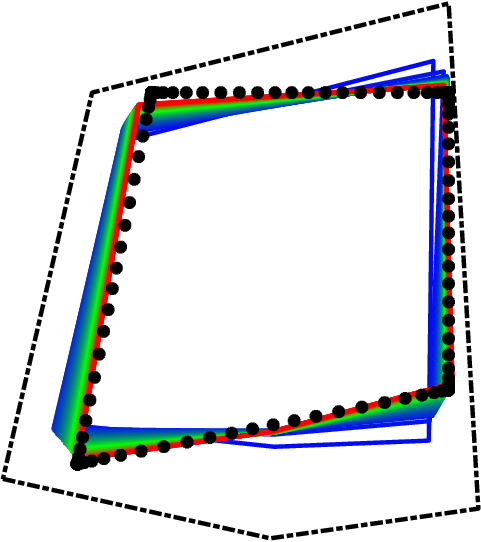}}
  \subfigure[$6$ corners]{
  \includegraphics[width=0.23\textwidth]{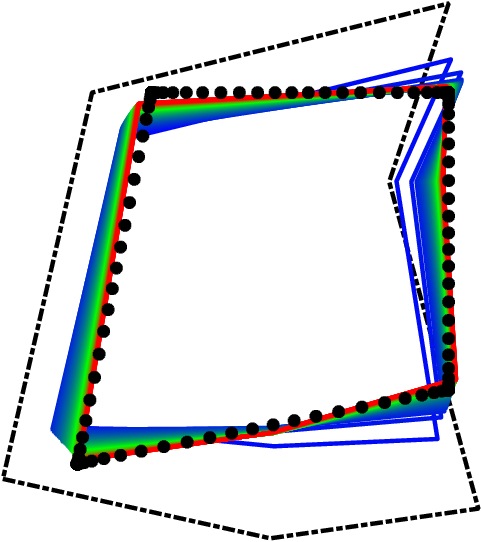}}
  \subfigure[$7$ corners]{
  \includegraphics[width=0.23\textwidth]{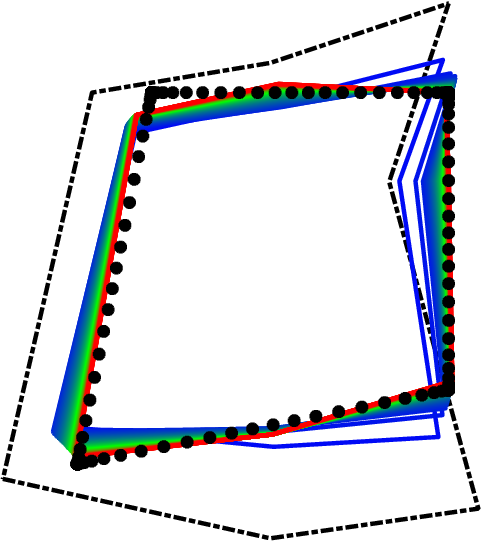}}
  \subfigure[$8$ corners]{
  \includegraphics[width=0.23\textwidth]{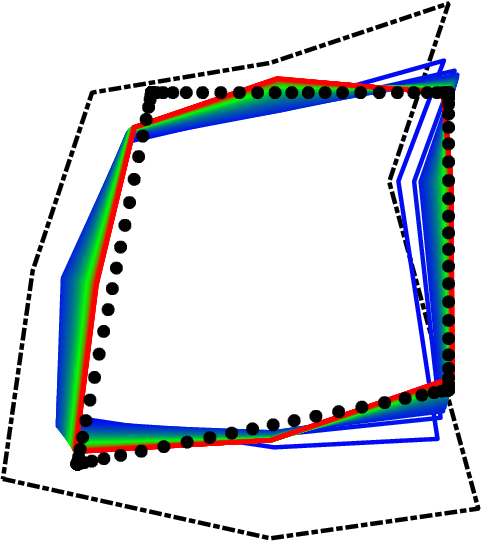}}
  \caption{Numerical results for Example \ref{e3} (ii).}\label{iteration2}
\end{figure}
(ii) {\color{xxx}Note} that we cannot take it for grant that the polygon $D$ has four corners. The numerical results of iteration method based on a single pair of Cauchy data to {\color{xxx}initial guesses of different corner numbers} are shown in Figure \ref{iteration2}, {\color{xxx}where the black dots represent $\pa D$, the red circle represents $\pa B$, the dashed line represents the initial guess, the boundary value $f$ takes value $0$ and $1$ at grey and black knots on $\pa B$, respectively, and the colored lines represent the shape in each iteration step with its color indicating the current step number in the sense of} (\ref{1030-3}).
The initial guess in Figure \ref{iteration2} (a) and (e) is given by the location of corners in order by $(0,-0.8)$, $(0.9,-1)$, $(1.6,-0.9)$, $(1.5,0.8)$, $(0.3,0.5)$ and corresponding number of knots on $\pa D$ is $130$.
The initial guess in Figure \ref{iteration2} (b) and (f) is given by $(0,-0.8)$, $(0.9,-1)$, $(1.6,-0.9)$, $(1.3,0.2)$, $(1.5,0.8)$, $(0.3,0.5)$ and corresponding number of knots on $\pa D$ is $132$.
The initial guess in Figure \ref{iteration2} (c) and (g) is given by $(0,-0.8)$, $(0.9,-1)$, $(1.6,-0.9)$, $(1.3,0.2)$, $(1.5,0.8)$, $(0.9,0.6)$, $(0.3,0.5)$ and corresponding number of knots on $\pa D$ is $196$.
The initial guess in Figure \ref{iteration2} (d) and (h) is given by $(0,-0.8)$, $(0.9,-1)$, $(1.6,-0.9)$, $(1.3,0.2)$, $(1.5,0.8)$, $(0.9,0.6)$, $(0.3,0.5)$, $(0.1,-0.1)$ and corresponding number of knots on $\pa D$ is $256$.
The total iteration {\color{xxx}number} for each figure are $50$.
{\color{xxx}The parameters in} \eqref{211014-1} and \eqref{0419} are set to be $\alpha=10^{-4}$ and $\alpha_0=10^{-5}$, respectively.
\end{example}

\section{Conclusion}\label{s5}
\setcounter{equation}{0}

In this paper, we have established the uniqueness of the inverse problem to determine the coefficient $\gamma$ and the Dirichlet polygon $D$ from a single pair of Cauchy data to the boundary value problem (\ref{1})--(\ref{3}) under some a priori assumption on the Dirichlet boundary value $f$ on $\pa B$.
{\color{hgh}A domain-defined sampling method was proposed to determine $\gamma$ and a modified factorization method was established to roughly recover the location and shape of $D$.
An iteration method is then used to improve the reconstruction of the geometry of $D$.}
All these numerical methods are based on a single pair of Cauchy data.
We note that similar uniqueness results can be established for Neumann obstacles and Robin obstacles with a constant Robin coefficient.
Similarly, the hybrid method proposed in this paper carries over to Neumann and Robin obstacles with minor modifications.
{\color{xxx}As an ongoing work}, we will extend the above results from the Laplace's equation to the Helmholtz equation.

\section*{Acknowledgements}

The authors would like to thank Professor Ikehata Masaru from Hiroshima University for helpful and stimulating discussions on the recovery of constant conductivity coefficient by a single pair of Cauchy data.
{\color{xxx}The work of Xiaoxu Xu is partially} supported by National Natural Science Foundation of China grant 12201489, {\color{xxx}Shaanxi Fundamental Science Research Project for Mathematics and Physics (Grant No.23JSQ025), Young Talent Fund of Association for Science and Technology in Shaanxi, China (Grant No.20240504),} the Young Talent Support Plan of Xi'an Jiaotong University, the Fundamental Research Funds for the Central Universities grant xzy012022009. The work of {\color{xxx}Guanghui Hu} is partially supported by National Natural Science Foundation of China grant 12071236 and the Fundamental Research Funds for Central Universities in China grant 63213025.


\begin{thebibliography}{10}

\bibitem{BC2002}
M.~Bochniak and F.~Cakoni.
\newblock Domain sensitivity analysis of the acoustic far-field pattern.
\newblock {\em Math. Methods Appl. Sci.}, 25(7):595--613, 2002.

\bibitem{Cakoni14}
F.~Cakoni and D.~Colton.
\newblock {\em A qualitative approach to inverse scattering theory}.
\newblock Springer, New York, 2014.

\bibitem{CK83}
D.~Colton and R.~Kress.
\newblock {\em Integral equation methods in scattering theory}.
\newblock SIAM, Philadelphia, PA, 2013.

\bibitem{CK19}
D.~{Colton} and R.~{Kress}.
\newblock {\em {I}nverse acoustic and electromagnetic scattering theory}.
\newblock Springer, Switzerland AG, fourth edition, 2019.

\bibitem{Costabel}
M.~Costabel.
\newblock Boundary integral operators on {L}ipschitz domains: elementary
  results.
\newblock {\em SIAM J. Math. Anal.}, 19(3):613--626, 1988.

\bibitem{EH2019}
J.~Elschner and G.~Hu.
\newblock Uniqueness and factorization method for inverse elastic scattering
  with a single incoming wave.
\newblock {\em Inverse Problems}, 35(9):094002, 2019.

\bibitem{GT}
D.~Gilbarg and N.~S. Trudinger.
\newblock {\em Elliptic partial differential equations of second order}.
\newblock Classics in Mathematics. Springer-Verlag, Berlin, 2001.

\bibitem{Grisvard}
P.~Grisvard.
\newblock {\em Elliptic problems in nonsmooth domains}.
\newblock Pitman, Boston, 1985.

\bibitem{HL2020}
G.~Hu and J.~Li.
\newblock Inverse source problems in an inhomogeneous medium with a single
  far-field pattern.
\newblock {\em SIAM J. Math. Anal.}, 52(5):5213--5231, 2020.

\bibitem{Ike1998}
M.~Ikehata.
\newblock Reconstruction of the shape of the inclusion by boundary
  measurements.
\newblock {\em Comm. Partial Differential Equations}, 23(7-8):1459--1474, 1998.

\bibitem{Ike2002}
M.~Ikehata and T.~Ohe.
\newblock A numerical method for finding the convex hull of polygonal cavities
  using the enclosure method.
\newblock {\em Inverse Problems}, 18(1):111--124, 2002.

\bibitem{Rath}
R.~Kieser, B.~Kleemann, and A.~Rathsfeld.
\newblock On a full discretization scheme for a hypersingular boundary integral
  equation over smooth curves.
\newblock {\em Z. Anal. Anwendungen}, 11(3):385--396, 1992.

\bibitem{Kirsch93}
A.~Kirsch.
\newblock The domain derivative and two applications in inverse scattering
  theory.
\newblock {\em Inverse Problems}, 9(1):81--96, 1993.

\bibitem{Kirsch1998}
A.~Kirsch.
\newblock Characterization of the shape of a scattering obstacle using the
  spectral data of the far field operator.
\newblock {\em Inverse Problems}, 14(6):1489--1512, 1998.

\bibitem{Kirsch08}
A.~Kirsch and N.~Grinberg.
\newblock {\em The factorization method for inverse problems}.
\newblock Oxford University Press, Oxford, 2008.

\bibitem{KH}
A.~Kirsch and F.~Hettlich.
\newblock {\em The mathematical theory of time-harmonic {M}axwell's equations}.
\newblock Springer, Cham, 2015.

\bibitem{Kress95}
R.~Kress.
\newblock On the numerical solution of a hypersingular integral equation in
  scattering theory.
\newblock {\em J. Comput. Appl. Math.}, 61(3):345--360, 1995.

\bibitem{Kress14}
R.~Kress.
\newblock {\em Linear integral equations}, volume~82 of {\em Applied
  Mathematical Sciences}.
\newblock Springer, New York, third edition, 2014.

\bibitem{WHB21}
Y.-H. Lin, G.~Nakamura, R.~Potthast, and H.~Wang.
\newblock Duality between range and no-response tests and its application for
  inverse problems.
\newblock {\em Inverse Probl. Imaging}, 15(2):367--386, 2021.

\bibitem{MH22}
G.~Ma and G.~Hu.
\newblock Factorization method for inverse time-harmonic elastic scattering
  with a single plane wave.
\newblock {\em Discrete Contin. Dyn. Syst. Ser. B}, 27(12):7469--7492, 2022.

\bibitem{MH2022}
G.~Ma and G.~Hu.
\newblock Factorization method with one plane wave: from model-driven and
  data-driven perspectives.
\newblock {\em Inverse Problems}, 38 (1): 015003, 2022.

\bibitem{WM}
W.~McLean.
\newblock {\em Strongly elliptic systems and boundary integral equations}.
\newblock Cambridge University Press, Cambridge, 2000.

\bibitem{Monk}
P.~Monk.
\newblock {\em Finite element methods for {M}axwell's equations}.
\newblock Numerical Mathematics and Scientific Computation. Oxford University
  Press, New York, 2003.

\bibitem{NP}
G.~Nakamura and R.~Potthast.
\newblock {\em Inverse modeling}.
\newblock IOP Publishing, Bristol, 2015.

\bibitem{PS2010}
R.~Potthast and M.~Sini.
\newblock The no response test for the reconstruction of polyhedral objects in
  electromagnetics.
\newblock {\em J. Comput. Appl. Math.}, 234(6):1739--1746, 2010.

\bibitem{QZZ}
F.~Qu, B.~Zhang, and H.~Zhang.
\newblock A novel integral equation for scattering by locally rough surfaces
  and application to the inverse problem: the {N}eumann case.
\newblock {\em SIAM J. Sci. Comput.}, 41(6):A3673--A3702, 2019.

\bibitem{WHB23}
S.~Sun, G.~Nakamura, and H.~Wang.
\newblock Numerical studies of domain sampling methods for inverse boundary
  value problems by one measurement.
\newblock {\em J. Comput. Phys.}, 485:Paper No. 112099, 18, 2023.

\end{thebibliography}
\end{document}